\newtheorem{theorem}[equation]{Theorem}
\newtheorem{corollary}[equation]{Corollary}
\newtheorem{definition}[equation]{Definition}
\newtheorem{lemma}[equation]{Lemma}
\newtheorem{proposition}[equation]{Proposition}
\newtheorem{remarkplain}[equation]{Remark}
\newtheorem{exampleplain}[equation]{Example}
\renewcommand{\sec}[1]{\section{#1}
\renewcommand{\theequation}{\thesection.\arabic{equation}}
  \setcounter{equation}{0}}
\newcommand{\subsec}[1]{\subsection{#1}
\renewcommand{\theequation}{\thesubsection.\arabic{equation}}
  \setcounter{equation}{0}}
\def\danger{\begin{trivlist}\item[]\noindent%
\begingroup\hangindent=3pc\hangafter=-2
\def\par{\endgraf\endgroup}%
\hbox to0pt{\hskip-\hangindent\dbend\hfill}\ignorespaces}
\def\enddanger{\par\end{trivlist}}
\newcommand{\qed}{\hfill $\square$ \medskip}
\newenvironment{proof}[1][Proof]{\noindent\textbf{#1.} }{\qed}
\newcommand{\Aut}{\mathrm{Aut}}
\newcommand{\diag}{\mathrm{diag}}
\newcommand{\Out}{\mathrm{Out}}
\newcommand{\Int}{\mathrm{Int}}
\renewcommand{\int}{\mathrm{int}}
\newcommand{\Ad}{\mathrm{Ad}}
\newcommand{\zinv}{\mathrm{inv}}
\newcommand{\SRF}{\mathrm{SRF}}
\newcommand{\Gad}{G_\mathrm{ad}}
\newcommand{\Gsc}{G_\mathrm{sc}}
\newcommand{\Zsc}{Z_\mathrm{sc}}
\newcommand{\Ztor}{Z_\mathrm{tor}}
\newcommand{\Gbar}{\overline G}
\newcommand{\Gal}{\mathrm{Gal}}
\newcommand{\Norm}{\mathrm{Norm}}
\newcommand{\Cent}{\mathrm{Cent}}
\renewcommand{\O}{\mathcal O}
\newcommand{\R}{\mathbb R}
\newcommand{\C}{\mathbb C}
\newcommand{\Z}{\mathbb Z}
\newcommand{\Ztwo}{\mathbb Z_2}
\renewcommand{\H}{\mathbb H}
\newcommand{\h}{\mathfrak h}
\renewcommand{\sl}{\mathfrak s\mathfrak l}
\renewcommand{\P}{\mathfrak p}
\newcommand{\B}{\mathcal B}
\renewcommand{\k}{\mathfrak k}
\newcommand{\ch}[1]{#1^\vee}
\newcommand\sigmaqc{\sigma_{\text{qc}}}
\newcommand\thetaqc{\theta_{\text{qc}}}
\newcommand{\cl}{\mathit{cl}}
\newcommand{\Lie}{\mathrm{Lie}}
\newcommand{\opp}{\text{-opp}}
\newcommand{\su}{\mathfrak s\mathfrak u}
\newcommand{\g}{\mathfrak g}
\newcommand\inv{^{-1}}
\newcommand\wh{\widehat}
\newcommand{\GL}{\text{GL}}
\newcommand{\SL}{\text{SL}}
\newcommand{\SO}{\text{SO}}
\newcommand{\SU}{\text{SU}}
\newcommand{\Spin}{\text{Spin}}
\begin{document}
\title{Galois and Cartan Cohomology of Real Groups}
\author{Jeffrey Adams and Olivier Ta\"ibi}
\maketitle

{\renewcommand{\thefootnote}{} 
\footnote{2000 Mathematics Subject Classification: 11E72 (Primary), 20G10, 20G20}
\footnote{Jeffrey Adams is supported in part by  National Science
Foundation Grant \#DMS-1317523}
\footnote{Olivier Ta\"ibi is supported by ERC Starting Grant 306326.}}

\section*{Abstract}
Suppose $G$ is a complex, reductive algebraic group. A real form of $G$ is an antiholomorphic involutive
automorphism $\sigma$, so $G(\R)=G(\C)^\sigma$ is a real Lie group.
Write $H^1(\sigma,G)$ for the Galois cohomology (pointed) set 
$H^1(\text{Gal}(\C/\R),G)$.  A Cartan involution for $\sigma$ is an
involutive holomorphic automorphism $\theta$ of $G$, commuting with
$\sigma$, so that $\theta\sigma$ is a compact real form of $G$.  Let
$H^1(\theta,G)$ be the set $H^1(\Ztwo,G)$ where the action of the
nontrivial element of $\Ztwo$ is by $\theta$.  By analogy with the
Galois group we refer to $H^1(\theta,G)$ as Cartan cohomology of $G$
with respect to $\theta$.  Cartan's classification of real forms of a
connected group, in terms of their maximal compact subgroups, amounts to an isomorphism
$H^1(\sigma,\Gad)\simeq H^1(\theta,\Gad)$ where $\Gad$ is the adjoint
group.  Our main result is a generalization of this: there is a
canonical isomorphism $H^1(\sigma,G)\simeq H^1(\theta,G)$. 

We apply this result to give simple proofs of some well
known structural results: the Kostant-Sekiguchi correspondence of
nilpotent orbits; Matsuki duality of orbits on the flag
variety; conjugacy classes of Cartan subgroups; and structure of the
Weyl group.  We also use it to compute $H^1(\sigma,G)$ for all simple,
simply connected groups, and  to give a cohomological
interpretation of strong real forms. For  the applications it 
is important that we do not assume $G$ is connected.

\sec{Introduction}

Suppose $G$ is a complex, reductive algebraic group.  A real form of
$G$ is an antiholomorphic involutive automorphism $\sigma$ of $G$, so
$G(\R)=G(\C)^\sigma$ is a real Lie group. See Section \ref{s:real} for more details.
Let $\Gamma=\Gal(\C/\R)$
and write $H^i(\Gamma,G)$ for the Galois cohomology of $G$ (if $G$ is
nonabelian $i\le 1$).  If we want to specify how the nontrivial
element of $\Gamma$ acts we will write $H^i(\sigma,G)$.  The
equivalence (i.e. conjugacy) classes of real forms of $G$, which are
inner to $\sigma$ (see Section \ref{s:real}), are parametrized by
$H^1(\sigma,\Gad)$ where $\Gad$ is the adjoint group.

On the other hand, at least for $G$ connected, Cartan classified the
real forms of $G$ in terms of holomorphic involutions as follows. We
say a Cartan involution for $\sigma$ is a holomorphic involutive
automorphism $\theta$, commuting with $\sigma$, so that
$\sigma^c=\theta\sigma$ is a compact real form.
If $G$ is connected then $\theta$
exists, and is unique up to conjugacy by $G^\sigma$. 
Following Mostow we  prove a similar result in general. See Section \ref{s:real}.

Let $H^i(\Ztwo,G)$ be the group cohomology of $G$ where the nontrivial
element of $\Ztwo=\Z/2\Z$ acts by $\theta$.  As above we denote this
$H^i(\theta,G)$, and we refer to this as Cartan cohomology of $G$.
Conjugacy classes of involutions which are inner to $\theta$ are
parametrized by $H^1(\theta,\Gad)$.

Thus the equivalence of the two classifications of real forms amounts
to an isomorphism (for connected $G$) of the first Galois and Cartan cohomology spaces
$H^1(\sigma,\Gad)\simeq H^1(\theta,\Gad)$.  It is natural
to ask if the same isomorphism holds with $G$ in place of $\Gad$. 
For our applications it is helpful to know the result for disconnected groups as well. 

\begin{theorem}
\label{t:main}
Suppose $G$ is a complex, reductive algebraic group (not necessarily
connected), and $\sigma$ is a real form of $G$.
Let $\theta$ be a  Cartan involution for $\sigma$. 
Then there is a canonical isomorphism $H^1(\sigma,G)\simeq H^1(\theta,G)$. 
\end{theorem}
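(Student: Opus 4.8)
\medskip
\noindent\emph{Proof sketch.}
The plan is to pass through the compact real form and reduce both sides to the same set. Put $\sigma^c=\theta\sigma$; this is a compact real form, and let $U=G(\C)^{\sigma^c}$, a compact Lie group meeting every connected component of $G(\C)$ (Section~\ref{s:real}). Since $\sigma$ and $\theta$ commute, the three involutions $\sigma,\theta,\sigma^c$ commute pairwise; hence both $\sigma$ and $\theta$ stabilise $U$, and on $U$ they induce one and the same involution $\theta_0$, because $\sigma^c=\theta\sigma$ acts trivially on $U$ and so $\theta|_U=(\sigma|_U)\inv=\sigma|_U$. If $u\in U$ is a cocycle for $\theta_0$ (that is, $u\,\theta_0(u)=1$), then, viewed in $G(\C)$, it is simultaneously a $\sigma$-cocycle and a $\theta$-cocycle, and a $\theta_0$-coboundary between two such cocycles is a fortiori a $\sigma$-coboundary and a $\theta$-coboundary. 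Hence the inclusion $U\hookrightarrow G$ induces well-defined maps $H^1(\theta_0,U)\to H^1(\sigma,G)$ and $H^1(\theta_0,U)\to H^1(\theta,G)$, and it suffices to prove both are bijections: the composite $H^1(\sigma,G)\xrightarrow{\ \sim\ }H^1(\theta,G)$, which sends the class of a cocycle to the class of any of its representatives lying in $U$, is then the desired isomorphism, and it is canonical because $U$ depends only on $\sigma$ and $\theta$.

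I would prove the bijectivity uniformly for $\tau\in\{\sigma,\theta\}$. Form the Lie group $\widetilde G=G(\C)\rtimes\langle\tau\rangle$ --- a disconnected complex reductive group when $\tau=\theta$, and a real Lie group with identity component $G(\C)$ when $\tau=\sigma$ --- which in either case has finitely many components and reductive identity component, and likewise $\widetilde U:=U\rtimes\langle\tau\rangle$ (with $\tau$ acting through $\theta_0$). A routine computation inside $\widetilde G$ identifies $H^1(\tau,G)$ with the set of $\widetilde G$-conjugacy classes of involutions lying in the non-identity coset $G(\C)\tau$: a $\tau$-cocycle $g$ corresponds to the involution $g\tau$, and coboundary equivalence of $g,g'$ corresponds to $\widetilde G$-conjugacy of $g\tau,g'\tau$ (in one direction conjugate by the element of $G(\C)$ realising the coboundary; in the other use that conjugating $g\tau$ by $h\in G(\C)$ or by $h\tau$ alters $g$ by a coboundary, together with the fact that $\tau(g)$ is always cohomologous to $g$). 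The same computation in $\widetilde U$ identifies $H^1(\theta_0,U)$ with the set of $\widetilde U$-conjugacy classes of involutions in $U\tau$, and the inclusion-induced map becomes ``pass from a $\widetilde U$-conjugacy class to the $\widetilde G$-conjugacy class it generates''.

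Now $\widetilde U$ is compact and meets both components of $\widetilde G$; since $U$ is a maximal compact subgroup of $G(\C)$, this makes $\widetilde U$ a maximal compact subgroup of $\widetilde G$. For surjectivity: an involution $x\in G(\C)\tau$ generates a finite, hence compact, subgroup, so by the Cartan--Iwasawa--Mostow theorem for Lie groups with finitely many components it is $\widetilde G$-conjugate into $\widetilde U$, necessarily into $U\tau$; thus every class of $H^1(\tau,G)$ has a representative in $U$. For injectivity: if $u\tau,u'\tau\in U\tau$ are $\widetilde G$-conjugate, then $\widetilde U$ and a suitable $\widetilde G$-conjugate of it are two maximal compacts both containing the finite group $\langle u'\tau\rangle$, hence conjugate by an element of $\Cent_{\widetilde G}(u'\tau)$ by Mostow's refinement; combining this with the fact that $\Norm_{\widetilde G}(\widetilde U)$ induces only inner automorphisms of $\widetilde U$ (it equals $\widetilde U\cdot\Cent_{\widetilde G}(\widetilde U)$), one obtains an element of $\widetilde U$ conjugating $u\tau$ to $u'\tau$, so $u$ and $u'$ are $\theta_0$-cohomologous in $U$. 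This gives both bijections, hence the theorem.

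The step I expect to be the main obstacle is the structure theory of maximal compact subgroups for the possibly disconnected groups $G(\C)$ and $\widetilde G$: that $U$ and $\widetilde U$ are maximal compact, that all maximal compacts are conjugate, Mostow's refinement for those containing a fixed compact subgroup, and the normaliser statement used above. These are classical for connected groups, and here one needs the disconnected versions --- precisely the Mostow-type material developed in Section~\ref{s:real}. Granting that, the remaining points (the semidirect-product description of $H^1(\tau,G)$ and the identification of $\sigma$- and $\theta$-cocycles inside $U$) are routine and go through without assuming $G$ connected, requiring only extra bookkeeping with components.
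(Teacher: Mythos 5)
Your argument is correct in outline and shares its skeleton with the paper's proof: both reduce the two cohomology sets to the compact real form $K=G^{\sigma\theta}$, on which $\sigma$ and $\theta$ restrict to the same involution, and both then show that the inclusion $K\hookrightarrow G$ induces bijections on $H^1$. Where you differ is in how that key bijection is established. The paper (Proposition \ref{p:serregeneral}, adapted from Borel--Serre) proves it by a short direct computation: write a cocycle as $g=kp$ in the Cartan decomposition $G=K\exp(\P)$, use uniqueness of the decomposition to split the cocycle condition into its $K$- and $\exp(\P)$-parts, and extract a square root of $p$ to exhibit the coboundary. You instead translate $\tau$-cocycles into involutions in the nonidentity coset of $\widetilde G=G\rtimes\langle\tau\rangle$ and invoke the conjugacy theory of maximal compact subgroups of the extended group: Cartan--Iwasawa--Mostow for surjectivity, and the centralizer refinement together with $\Norm_{\widetilde G}(\widetilde U)=\widetilde U\cdot\Cent_{\widetilde G}(\widetilde U)$ for injectivity. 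This is a legitimate alternative, and you correctly identify the load-bearing inputs; note, however, that when $\tau=\sigma$ the extended group is a genuinely real (non-complex) disconnected Lie group, so Theorem \ref{t:conjcpctsubgps} and Lemma \ref{l:normcpct}, which in Section \ref{s:real} are proved for complex reductive groups and their compact real forms, do not apply verbatim and must be re-established for $G\rtimes\langle\sigma\rangle$. The needed facts (that $\widetilde U$ is maximal compact, that $\widetilde G=\widetilde U\exp(\P)$ with uniqueness, the centralizer refinement, and the normalizer computation) do all extend, but once the Cartan decomposition of $\widetilde G$ is in hand your injectivity argument is essentially the paper's cocycle manipulation in disguise, and the direct computation is shorter than routing through maximal-compact conjugacy. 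One last point of comparison: the paper proves the slightly stronger statement $H^1(\tau,K;Z_K)\simeq H^1(\tau,G;Z_K)$ for cocycles with central, not necessarily trivial, norm, which is what is actually used later for strong real forms (Proposition \ref{p:H1}); your dictionary would then have to track elements of $G\tau$ whose square is a prescribed central element rather than involutions, which works but requires restating the surjectivity step, since such elements no longer generate finite subgroups when the central element has infinite order.
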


The interplay between the $\sigma$ and $\theta$ pictures plays a
fundamental role in the structure and representation theory of real
groups, going back at least to Harish Chandra's formulation of the
representation theory of $G(\R)$ in terms of $(\g,K)$-modules.  The
theorem is an aspect of this, and we give several applications.

Suppose $X$ is a homogeneous space for $G$, equipped with a real structure
$\sigma_X$ which is compatible with $\sigma_G$. Then the space of
$G(\R)$ orbits on $X(\R)$  can be understood in terms of the Galois
cohomology of the stabilizer of a point in $X$. Similar remarks apply
to computing $G^\theta$-orbits. 
Note that these stabilizers may be disconnected, even if $G$ is
connected.
See Proposition \ref{p:homogeneous}.

We use this principle to give simple proofs of  several well known results, including
the Kostant-Sekiguchi correspondence and Matsuki duality. 
Let $G(\C)$ be a connected complex
reductive group, with real form $\sigma$ and corresponding Cartan
involution $\theta$. Let $G(\R)=G(\C)^\sigma$, and $K(\C)=G(\C)^\theta$. Let $\g_0=\g^\sigma$
and $\P=\g^{-\theta}$. 
The Kostant-Sekiguchi correspondence is a bijection between the 
nilpotent
$G(\R)$-orbits on $\g_0$  and the  nilpotent $K(\C)$-orbits on $\P$.
Matsuki duality is a bijection between the $G(\R)$ and $K(\C)$ orbits
on the flag variety of $G$. 
See Propositions \ref{p:ks} and  \ref{p:matsukiduality}.

On the other hand Proposition \ref{p:X} applied to the space of Cartan
subgroups 
gives a simple proof of another result of Matsuki:
there is a bijection between $G(\R)$-conjugacy classes of
Cartan subgroups of $G(\R)$ and $K$-conjugacy classes of
$\theta$-stable Cartan subgroups of $G$ \cite{matsuki}. 
Also a well known result about
two versions of the rational Weyl group (Proposition \ref{p:W})
follows. 

If $G$ is connected Borovoi proved $H^1(\sigma,G)\simeq H^1(\sigma,H_f)/W_i$ where 
$H_f$ is a fundamental Cartan subgroup, and $W_i$ is a certain subgroup of the Weyl group \cite{borovoi}.
Essentially the same proof carries over to give 
$H^1(\theta,G)\simeq H^1(\theta,H_f)/W_i$. We prove this as a consequence 
of Theorem \ref{t:main}
(Proposition \ref{p:borovoi}).

Let $Z$ be the center of $G$ and let $\Ztor$ be its torsion subgroup.
Associated to  a real form 
$\sigma$ is its central invariant, denoted
$\zinv(\sigma)\in \Ztor^\sigma/(1+\sigma)\Ztor$.  
The formulation of a precise version of the Langlands correspondence 
requires the notion of strong real form. 
See Section \ref{s:strong} for this definition, and for 
the notion of central invariant of a strong real form, which is an element of $\Ztor^\sigma$. 

\begin{theorem}[Proposition \ref{p:H1}]
\label{t:H1}
Suppose $\sigma$ is a real form of $G$.
Choose a representative $z\in
\Ztor^{\sigma}$ of $\zinv(\sigma)\in \Ztor^{\sigma}/(1+\sigma)\Ztor$. Then there is a bijection
$$
H^1(\Gamma,G)\overset{1-1}\longleftrightarrow\text{the set of
  strong real forms with central invariant }z.
$$
\end{theorem}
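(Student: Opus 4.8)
The plan is to unwind the definition of strong real form given in Section \ref{s:strong} into the language of an extended group, and then to produce the bijection by choosing a base point. Recall that strong real forms in the inner class of $\sigma$ are encoded by elements of a group $\widetilde{G}$ which contains $G$ as a subgroup of index two, the nontrivial coset $G\delta$ being the one whose elements induce on $G$ an automorphism in the inner class of $\sigma$: such an element $x\in G\delta$ with $x^2\in\Ztor$ is a strong real form, its central invariant is the element $x^2\in\Ztor^\sigma$, its underlying (weak) real form is $\Int(x)|_G$, and two strong real forms are equivalent when they are $G$-conjugate. On the other side, $H^1(\Gamma,G)=H^1(\sigma,G)$ is computed by cocycles, that is, by the elements $g\in G$ with $g\,\sigma(g)=1$ modulo the coboundary relation $g\sim h^{-1}g\,\sigma(h)$; equivalently it is the set of involutions lying in the nontrivial coset of $G$ in $G\rtimes\Gamma$, taken up to $G$-conjugacy.

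First I would fix a base point: a strong real form $\delta\in\widetilde{G}$ whose underlying real form is $\sigma$ and whose central invariant is exactly the chosen representative $z$. Start from any strong real form $\delta_0$ with underlying real form $\sigma$; by definition $\delta_0^2\in\Ztor^\sigma$ and it represents $\zinv(\sigma)$. Since $z$ is another representative, $z=\zeta\,\sigma(\zeta)\,\delta_0^2$ for some $\zeta\in Z$, and then $\delta:=\zeta\delta_0$ still has underlying real form $\sigma$ (it differs from $\delta_0$ by a central element, so $\Int(\delta)|_G=\Int(\delta_0)|_G$), satisfies $\delta^2=z$, and is again a strong real form because $z\in\Ztor$. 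The only point that needs care here is that $z$ must be realized on the nose rather than merely modulo $(1+\sigma)\Ztor$; this is precisely why the statement fixes a representative at the outset, and it is harmless because $\zeta$ is free to range over all of $Z$.

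With the base point $\delta$ in hand I would define the map from $H^1(\Gamma,G)$ to the set of strong real forms of central invariant $z$ by $[g]\mapsto[g\delta]$, and then check it is a well-defined bijection. For $x=g\delta$ one computes $x^2=g\,\sigma(g)\,\delta^2=g\,\sigma(g)\,z$, so $x^2=z$ if and only if $g\,\sigma(g)=1$; hence $g\delta$ is a strong real form with central invariant $z$ exactly when $g$ is a cocycle, and conversely every $x\in G\delta$ with $x^2=z$ is $g\delta$ for the cocycle $g=x\delta^{-1}$. For the equivalence relations, $h(g\delta)h^{-1}=\bigl(h\,g\,\sigma(h)^{-1}\bigr)\delta$, so $g\delta$ and $g'\delta$ are $G$-conjugate precisely when $g'=h\,g\,\sigma(h)^{-1}$; after replacing $h$ by $h^{-1}$ this is exactly the coboundary relation, so the map descends to a bijection between the two sets.

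I expect the main difficulty to be organizational rather than mathematical: lining up the definitions of Section \ref{s:strong} with the cocycle picture, in particular matching the $G$-conjugacy of strong real forms with the coboundary relation (the innocuous inversion $h\leftrightarrow h^{-1}$) and confirming that ``the set of strong real forms with central invariant $z$'' is literally $\{x\in G\delta:x^2=z\}$ modulo $G$-conjugacy for the extended group under consideration. The argument uses neither the connectedness of $G$ nor Theorem \ref{t:main}, resting only on the construction of the extended group and the definition of the central invariant; its content is the conceptual one of identifying strong real forms cohomologically.
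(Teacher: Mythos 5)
There is a genuine gap, and it is located exactly where you assert that the argument ``uses neither the connectedness of $G$ nor Theorem \ref{t:main}.'' In this paper a strong real form is defined (Definition \ref{d:strongreal}) relative to a \emph{holomorphic} quasicompact involution $\thetaqc$: it is an element $g\in Z^1(\thetaqc,G;\Ztor)$, equivalently an element $x=g\thetaqc$ of the nonidentity coset of the extended group ${}^{\thetaqc}G=G\rtimes\langle\thetaqc\rangle$ with $x^2\in\Ztor$. Every element of that coset acts on $G$ by a holomorphic automorphism, so $\Int(x)|_G=\int(g)\circ\thetaqc$ is a Cartan involution, not an antiholomorphic real form; your claim that the ``underlying (weak) real form is $\Int(x)|_G$'' conflates the Cartan-side definition used here with the Galois-side definition of \cite{abv} and \cite{vogan_local_langlands} (see Remark \ref{rem:equivalent}, where the equivalence of the two notions is itself deduced from Corollary \ref{c:main}). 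Consequently your base point $\delta$ with $\Int(\delta)|_G=\sigma$ and $\delta^2=z$ does not exist in the relevant extended group, and the key computation $(g\delta)^2=g\,\sigma(g)\,\delta^2$ should instead read $(g\delta)^2=g\,\theta(g)\,\delta^2$ with $\theta=\Int(\delta)|_G$ a Cartan involution for $\sigma$.

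Carried out correctly, your construction does prove something true and useful: choosing $x=g\thetaqc$ with $\int(g)\circ\thetaqc=\theta$ a Cartan involution for $\sigma$ and $g\thetaqc(g)=z$, the map $h\mapsto hg^{-1}$ identifies the set of strong real forms of central invariant $z$, namely $H^1(\thetaqc,G;\{z\})$, with $H^1(\theta,G)$. That is precisely the twisting step of the paper's proof of Proposition \ref{p:H1}. But the statement concerns $H^1(\Gamma,G)=H^1(\sigma,G)$, and the remaining step $H^1(\theta,G)\simeq H^1(\sigma,G)$ is exactly Corollary \ref{c:main} (equivalently Theorem \ref{t:main}), which is the entire nontrivial content of the result; without it the theorem would reduce to the near-tautology you have written down. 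To repair the proof, keep your bookkeeping but replace the nonexistent Galois base point by the Cartan one and then invoke Corollary \ref{c:main} to pass from $H^1(\theta,G)$ to $H^1(\sigma,G)$.
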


This bijection is useful in both directions. On the one hand it is not
difficult to compute the right hand side, thereby computing
$H^1(\sigma,G)$.
Over a p-adic field $H^1(\sigma,G)=1$ if $G$ is simply connected.
Over $\R$ this is not the case, and we use Theorem \ref{t:main} to compute
$H^1(\sigma,G)$ for all such groups.
See Section \ref{s:real} and  the tables in Section
\ref{s:tables}. We used 
the Atlas of Lie Groups and
Representations software for some of these calculations.
See \cite{borovoi_timashev} for another approach. 

On the other hand the notion of strong real form is
important in formulating  a precise version of the local Langlands
conjecture. In that context it would be more natural if 
strong real forms were described in terms of classical Galois cohomology.
The Theorem provides such an interpretation. See Corollary
\ref{c:interpret}.

The authors would like to thank Michael Rapoport for asking about the
interpretation of strong real forms in terms of Galois cohomology, and
apologize it took so long to get back to him. 
We are also grateful to
Tasho Kaletha for several helpful discussions during the writing of this paper 
and of \cite{kaletha_rigid}, and Skip Garibaldi for a discussion of 
the Galois cohomology of the spin groups.

\sec{Preliminaries on Group Cohomology}
\label{s:prelim}

See \cite{serre_galois} for an overview of group cohomology.

For now suppose $\tau$ is an involutive automorphism of an abstract group $G$.
Define $H^i(\Ztwo,G)$ to be the group cohomology space where
the nontrivial element of $\Ztwo$ acts by $\tau$.  
We will also denote this group $H^i(\tau,G)$ 
\footnote[1]{There is a small notational issue here. If $\tau=1$ (the identity automorphisms of $G$), $H^1(1,G)$ denotes the group 
$H^1(\Ztwo,G)$ with $\Ztwo$ acting trivially.}.
If $G$ is abelian these are groups and are defined for all $i\ge 0$. 
Otherwise these are pointed sets, and defined  only  for $i=0,1$.
Let 
$$
Z^1(\tau,G)=G^{-\tau}=\{g\in G\mid g\tau(g)=1\}.
$$
Then we have
the standard identifications
$$
H^0(\tau,G)=G^\tau,\,
H^1(\tau,G)=Z^1(\tau,G)/\{g\mapsto x g\tau(x\inv)\}
$$
where $G^{-\tau}=\{g\in G\mid g\tau(g)=1\}$.  For $g\in G^{-\tau}$ let $\cl(g)$ be 
the corresponding class in $H^1(\tau,G)$.

If $G$ is abelian we also have the Tate cohomology groups $\wh H^i(\tau,G)$ $(i\in\Z)$.
These satisfy
$$
\wh H^0(\tau,G)=G^\tau/(1+\tau)G,\quad \wh H^1(\tau,G)=H^1(\tau,G),
$$
and (since $\tau$ is cyclic), 
$\wh H^i(\tau,G)\simeq \wh H^{i+2}(\tau,G)$ for all $i$.

Suppose $1\rightarrow A\rightarrow B\rightarrow C\rightarrow 1$ is an exact
sequence of groups with an involutive automorphism $\tau$.
Then there is an exact sequence
\begin{equation}
\label{e:longexact}
1
\rightarrow H^0(\tau,A)
\rightarrow H^0(\tau,B)
\rightarrow H^0(\tau,C)
\rightarrow H^1(\tau,A)
\rightarrow H^1(\tau,B)
\rightarrow H^1(\tau,C)
\end{equation}
Furthermore if $A\subset Z(B)$ ($Z(*)$ denotes the center of a group) then there is one further step
$\rightarrow H^2(\tau,A) = A^{\tau} / (1+\tau)A$. 

We will need the following generalization of  $H^1(\tau,G)$.
\begin{definition}
Suppose  $\tau$ is an involutive automorphism of $G$, and $A$ is a subset of $Z(G)$. 
Define 
\begin{subequations}
\renewcommand{\theequation}{\theparentequation)(\alph{equation}}  
\label{e:H1Z}  
\begin{equation}
Z^1(\tau,G;A)=\{g\in G\mid g\tau(g)\in A\}
\end{equation}
and
\begin{equation}
H^1(\tau,G;A)=Z^1(\tau,G;A)/[g \sim tg\tau(t\inv)\,\,(t\in G)].
\end{equation}
\end{subequations}
These are pointed sets if $1 \in A$.
The map $g \mapsto g\tau(g)$ factors to a map from $H^1(\tau,G;A)$ to $A$.
\end{definition}
Taking $A=\{1\}$ gives  ordinary cohomology $H^1(\tau,G)$. Write $\cl(g)$ for the image of $g\in Z^1(\tau,G;A)$ in $H^1(\tau,G;A)$. 

We make  use of twisting in nonabelian cohomology 
\cite{serre_galois}*{Section III.4.5}. 
Let $Z=Z(G)$. For $g\in G$ let $\int(g)$ be the inner automorphism
$\int(g)(h)=ghg\inv$. 
Fix an involutive automorphism $\tau$ of $G$, and $z\in Z$. 
Note that $\int(g)\circ\tau$ is an involution if and only if 
$g\in Z^1(\tau,G;Z)$.

\begin{lemma}
\label{l:twist1}
Suppose $\tau'=\int(g)\circ\tau$ for some $g\in Z^1(\tau,G;Z)$. 
Let $w=g\tau(g)\in Z$. Then the map
$h\mapsto hg\inv$ induces an isomorphism
$$
H^1(\tau,G;z)\rightarrow H^1(\tau',G;zw\inv).
$$
If $H^1(\tau,Z)=1$, this isomorphism is independent of the choice of $g\in Z^1(\tau,G;w)$ satisfying $\tau'=\int(g)\circ\tau$. 

In particular $H^1(\tau,G)\simeq H^1(\tau',G)$ if $\tau'=\int(g)\circ\tau$, where $g\in Z^1(\tau,G)$, 
and this isomorphism is canonical if $H^1(\tau,Z)=1$.

Finally suppose $\tau'$ is conjugate to $\tau$ by an inner automorphism of $G$. Then 
$H^1(\tau,G)\simeq H^1(\tau',G)$, and this isomorphism is canonical if
$$\ker\left( H^1(\tau,Z) \rightarrow H^1(\tau, G) \right)=1.$$
\end{lemma}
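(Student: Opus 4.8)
The plan is to establish the three assertions in order, using the first (the $A$-twisted version) to deduce the second, and then a short additional argument for the third. For the first part, I would start by checking that the map $h \mapsto hg^{-1}$ sends $Z^1(\tau,G;z)$ into $Z^1(\tau',G;zw^{-1})$: if $h\tau(h) = z$ then, using $\tau' = \int(g)\circ\tau$ and that $z,w \in Z$, a direct computation gives $(hg^{-1})\tau'(hg^{-1}) = hg^{-1}\cdot g\tau(h^{-1})\tau(g^{-1})\cdot g^{-1}$; since everything central can be pulled out, this collapses to $z w^{-1}$. Next I would check this respects the coboundary relation $h \sim tht\tau(t^{-1})$: replacing $h$ by $tht\tau(t^{-1})$ and then applying $h\mapsto hg^{-1}$ should give $t(hg^{-1})\tau'(t^{-1})$, again by unwinding the definition of $\tau'$ and moving central elements. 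So the map descends to $H^1(\tau,G;z) \to H^1(\tau',G;zw^{-1})$. For bijectivity, the inverse is $h \mapsto hg$ combined with the analogous statement for the pair $(\tau',\tau)$ — note $\tau = \int(g^{-1})\circ\tau'$ up to the relation $g^{-1} = \tau'(g)\cdot(\text{central})$, so this is symmetric; one checks the two maps are mutually inverse on representatives (they are, since $g$ is fixed).

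For the independence claim: if $g_1, g_2 \in Z^1(\tau,G;w)$ both satisfy $\tau' = \int(g_i)\circ\tau$, then $\int(g_1 g_2^{-1}) = 1$, so $g_1 g_2^{-1} = c \in Z$, and from $g_i\tau(g_i) = w$ we get $c\tau(c) = 1$, i.e. $c \in Z^1(\tau,Z)$. The two induced maps differ by right multiplication by $c$ on representatives; I need this to be trivial on cohomology. The hypothesis $H^1(\tau,Z) = 1$ means $c = d\tau(d^{-1})$ for some $d \in Z$, and then right multiplication by $c = d\tau(d^{-1})$ agrees with the coboundary by $d$ (since $d$ is central, $h \mapsto d h \tau(d^{-1}) = h\cdot d\tau(d^{-1})$), hence acts trivially on $H^1$. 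The second assertion of the lemma is the special case $z = 1$ (so $w = g\tau(g) = 1$, which is exactly $g \in Z^1(\tau,G)$), giving $H^1(\tau,G) \simeq H^1(\tau',G)$, canonical when $H^1(\tau,Z) = 1$.

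For the final assertion, suppose $\tau' = \int(g)\circ\tau$ for some $g \in G$ with no constraint on $g\tau(g)$. The point is that $\tau'$ being an involution forces $g\tau(g) \in Z$ (as remarked before the lemma), i.e. $g \in Z^1(\tau,G;Z)$, so the first part applies with $w = g\tau(g)$ and gives $H^1(\tau,G;z) \simeq H^1(\tau',G;zw^{-1})$ for every $z$; but here we only want $H^1(\tau,G) = H^1(\tau,G;1)$, which lands in $H^1(\tau',G;w^{-1})$, not obviously $H^1(\tau',G) = H^1(\tau',G;1)$. To fix this one uses the freedom in choosing $g$: I would look for $g$ within its coset such that $g\tau(g) = 1$ exactly. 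Modifying $g$ to $gc$ with $c \in Z$ changes $g\tau(g) = w$ to $wc\tau(c)$; so we can achieve $w = 1$ iff $w \in (1+\tau)Z$, i.e. iff the class of $w$ in $\wh H^0(\tau,Z) = Z^\tau/(1+\tau)Z$ is trivial — but $w = g\tau(g)$ automatically satisfies $\tau(w) = \tau(g)g = g^{-1}(g\tau(g))g = w$ (central), so $w \in Z^\tau$, and the obstruction is its image in $\wh H^0(\tau,Z)$. Hmm — this obstruction need not vanish, so instead I would argue more carefully: the map $H^1(\tau,G) \to H^1(\tau',G)$ should be defined directly as "$h\mapsto hg^{-1}$ followed by correcting by a central element," and the hypothesis $\ker(H^1(\tau,Z)\to H^1(\tau,G)) = 1$ is precisely what makes the whole construction well-defined independent of the choice of $g$ in its inner-automorphism coset. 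Concretely: two choices $g_1, g_2$ with $\int(g_1)\circ\tau = \int(g_2)\circ\tau = \tau'$ differ by $g_1 = g_2 c$, $c \in Z$; the resulting identifications differ by the automorphism of $H^1(\tau',G)$ induced by $c$, and tracing through, this automorphism is trivial exactly when the class of $c$ dies in $H^1(\tau,G)$ — but one must allow $c$ to range over all of $Z$ (not just $Z^1(\tau,Z)$), which is why the stated kernel condition, rather than $H^1(\tau,Z) = 1$, is the right hypothesis.

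The main obstacle I anticipate is this last point: getting the bookkeeping exactly right for the final assertion, specifically identifying precisely which subgroup of central cocycles governs the ambiguity in the isomorphism and matching it to $\ker(H^1(\tau,Z)\to H^1(\tau,G))$. The first two parts are essentially a careful unwinding of definitions with central elements sliding freely past everything; the only subtlety there is making sure the independence statement uses $H^1(\tau,Z) = 1$ in the form "every $\tau$-cocycle in $Z$ is a $\tau$-coboundary in $Z$," which then is automatically a coboundary in $G$. For the third part, the transition from the twisted sets $H^1(\tau,G;z)$ back to honest cohomology $H^1(\tau,G)$, and the accompanying change of the relevant obstruction group from $Z^1(\tau,Z)$ to all of $Z$, is where I would spend the most care.
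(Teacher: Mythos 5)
The paper omits its own proof of this lemma, so I can only judge your argument on its merits. Your treatment of the first two assertions is correct and is surely the intended one: $h\mapsto hg\inv$ sends $Z^1(\tau,G;z)$ to $Z^1(\tau',G;zw\inv)$ because $\tau(g)\inv g\inv=w\inv$ is central, it intertwines the coboundary relations, its inverse is $h\mapsto hg$, and when $H^1(\tau,Z)=1$ the ambiguity $c=g_1g_2\inv\in Z^1(\tau,Z)$ is a central coboundary $d\tau(d\inv)$ and hence acts trivially. (Two harmless typos: $\tau(h\inv)$ should be $\tau(h)$, and $tht\tau(t\inv)$ should be $th\tau(t\inv)$.)

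The third assertion is where your proposal has genuine problems. First, you read ``conjugate to $\tau$ by an inner automorphism'' as ``$\tau'=\int(g)\circ\tau$ for some $g$'' and then chase an obstruction in $\wh H^0(\tau,Z)$ that is not actually there: conjugacy means $\tau'=\int(x)\circ\tau\circ\int(x)\inv$, which unwinds to $\tau'=\int(g)\circ\tau$ with $g=x\tau(x\inv)$, and this $g$ automatically satisfies $g\tau(g)=1$. So the second assertion applies verbatim and existence is immediate; no correction by a central element is needed. Second, your criterion for canonicity --- that right multiplication by the ambiguity $c$ is ``trivial exactly when the class of $c$ dies in $H^1(\tau,G)$'' --- is false in the ``if'' direction: $\cl(c)=1$ in $H^1(\tau,G)$ only says $c$ fixes the basepoint, and the $H^1(\tau,Z)$-action on $H^1(\tau,G)$ can have different stabilizers at different classes (this is exactly why the fibers in Corollary \ref{c:fiber} have varying sizes). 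The correct chain is: two admissible choices $g_i=x_i\tau(x_i\inv)$ differ by $c=u\tau(u\inv)$ with $u=x_2\inv x_1$, so $c$ lies in $Z^1(\tau,Z)$ \emph{and} is a coboundary in $G$; the hypothesis $\ker\bigl(H^1(\tau,Z)\to H^1(\tau,G)\bigr)=1$ then forces $c=d\tau(d\inv)$ with $d\in Z$; and only for such $c$ is right multiplication literally the coboundary $h\mapsto dh\tau'(d\inv)$, hence trivial on all of $H^1(\tau',G)$. Your closing remark that $c$ ``must range over all of $Z$'' is likewise off: the ambiguity is always a cocycle in $Z$, and the relevant group is precisely the subgroup of $H^1(\tau,Z)$ killed in $H^1(\tau,G)$, which is why the kernel condition (rather than $H^1(\tau,Z)=1$ or something involving all of $Z$) is the stated hypothesis.
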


We omit the elementary proof.

Write $[\tau]$ for the $G$-conjugacy class of $\tau$. 

\begin{definition}
Assume $\ker\left( H^1(\tau,Z) \rightarrow H^1(\tau, G) \right)=1$. Given a G-conjugacy class $[\tau]$ of involutive automorphisms of $G$,
define $H^1([\tau],G)=H^1(\tau,G)$. 
\end{definition}
This is well-defined by the Lemma.

\sec{Real Forms and Cartan involutions}
 \label{s:real}

In the rest of the paper, unless otherwise noted, $G$ will denote a
complex, reductive algebraic group.
Except in a few places we do not
assume $G$ is connected. Write $G^0$ for the identity component. 

We identify $G$ with its complex points $G(\C)$ and use these
interchangeably. We may view $G$ either as an algebraic group or as a
complex Lie group. The identity component of $G$ as an algebraic group
is the same as the topological identity component when viewed as a Lie
group, the component group $G/G^0$ is finite.

A real form of $G$ is a real algebraic group $H$ endowed with
an isomorphism $\phi : H_{\C} \simeq G$, where $H_{\C}$ denotes the
base change of $H$ from $\R$ to $\C$.  By an {\em algebraic, conjugate
linear, involutive automorphism} of $H_\C$ we mean
an algebraic, involutive automorphism of $H_{\C}$ (considered as a
scheme over $\R$) such that the induced morphism between rings of
polynomial functions on $H$ is conjugate linear, and compatible with
the morphisms defining the group structure on $H$. Naturally associated to a
real form $H$ is an algebraic, conjugate linear, involutive automorphism $\sigma_H$
of $H_\C$.  Transporting $\sigma_H$ to $G$ via $\phi$ this is
equivalent to having an algebraic, conjugate linear, involutive automorphism $\sigma$ of $G$.
Conversely, by Galois descent any such automorphism  of $G$ comes
from a real form $(H, \phi)$, which is unique up to unique isomorphism.
See \cite{neronmodels}*{\S 6.2, Example B and \S 6.5} for details in
a much more general situation.

It is convenient to work with a more elementary notion of real form,
using only the structure of $G$ as a complex Lie group.  Any algebraic,
conjugate linear, involutive automorphism of $G$ induces an antiholomorphic
involutive automorphism of $G$. In fact every antiholomorphic
automorphism arises this way:

\begin{lemma} \label{l:antiholconjlin}
Let $G$ be a complex reductive
algebraic group.  Then any antiholomorphic involutive
automorphism of $G$ is induced by a unique algebraic conjugate linear involutive automorphism
of $G(\C)$.
\end{lemma}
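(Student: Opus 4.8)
\textbf{Proof proposal for Lemma \ref{l:antiholconjlin}.}

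The plan is to reduce the problem to the standard fact that an abstract (a priori not algebraic) involutive automorphism of a complex reductive group can be made algebraic, by first recording that an antiholomorphic automorphism is automatically continuous, hence real-analytic, and that it carries the canonical compact real form to another compact real form. First I would pick a maximal compact subgroup $U$ of $G$ and let $\sigma_c$ be the associated compact real form (an antiholomorphic involution commuting with nothing in particular yet). Given an arbitrary antiholomorphic involutive automorphism $\sigma$ of $G$, the composite $\sigma\sigma_c$ is a holomorphic automorphism of $G$; by conjugating $\sigma$ by an inner automorphism (which does not change whether it is induced by an algebraic map) I may arrange that $\sigma$ commutes with $\sigma_c$, using the standard fact that any two compact real forms are conjugate and that $\sigma(U)$ is again a maximal compact subgroup. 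Then $\theta:=\sigma\sigma_c$ is a holomorphic involutive automorphism of $G$.

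The key step is then to show $\theta$ is algebraic. For this I would invoke the known structure theory of holomorphic automorphisms of complex reductive groups: the automorphism group of $G$ as a complex Lie group coincides with the automorphism group of $G$ as an algebraic group — on the identity component this follows from the fact that $\Aut(G^0)$ is generated by inner automorphisms, the (algebraic) action on a pinning, and diagram automorphisms, all of which are algebraic; on components one uses that $G/G^0$ is finite so a Lie-group automorphism permutes the finitely many components algebraically. Hence $\theta\in\Aut_{\mathrm{alg}}(G)$. Likewise $\sigma_c$ is algebraic and conjugate-linear (it is the conjugate-linear involution attached to the split — no, compact — real form of $G$, which exists as a real algebraic group). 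Therefore $\sigma=\theta\sigma_c$ is algebraic and conjugate-linear, being the composite of an algebraic holomorphic automorphism with an algebraic conjugate-linear one. Undoing the initial inner conjugation (inner automorphisms by elements of $G(\C)$ are algebraic) gives the original $\sigma$ as an algebraic conjugate-linear involutive automorphism of $G(\C)$.

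For uniqueness: if two algebraic conjugate-linear involutive automorphisms $\sigma_1,\sigma_2$ of $G(\C)$ induce the same antiholomorphic automorphism of $G$, then they agree on $G(\C)$, which is Zariski-dense in $G$; since an algebraic morphism is determined by its effect on $\C$-points of a variety over $\C$ (or, in the conjugate-linear setting, by the induced map on the ring of regular functions, which is determined by its values on $G(\C)$), we get $\sigma_1=\sigma_2$ as morphisms of schemes. The main obstacle I anticipate is not any single computation but rather assembling the correct form of the ``every holomorphic automorphism of a complex reductive group is algebraic'' statement in the possibly disconnected case, and being careful that conjugating by an inner automorphism to achieve commutation with $\sigma_c$ is legitimate — i.e. that it genuinely does not affect algebraicity, which is clear since $\Int(g)$ for $g\in G(\C)$ is algebraic. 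The disconnected case needs the mild extra remark that an automorphism of $G$ stabilizes $G^0$ and induces a permutation of components, and algebraicity of the whole follows from algebraicity on $G^0$ together with the choice of algebraic coset representatives.
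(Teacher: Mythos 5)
Your argument is essentially correct, but it takes a genuinely different and much heavier route than the paper. The paper's proof is a two-line embedding trick: fix a closed immersion $\rho : G \rightarrow \GL(V)$, choose any real structure on $V$ with conjugation $\sigma_V$, and observe that $\sigma_V \circ \rho \circ \varphi$ is a \emph{holomorphic} representation of $G$, hence algebraic; composing back with $\sigma_V$ exhibits $\varphi$ as algebraic conjugate linear. You instead factor $\sigma = \theta\sigma_c$ through a compact real form and appeal to (a) existence and conjugacy of compact real forms and (b) the fact that every holomorphic automorphism of a complex reductive group is algebraic. This works, but note three things. First, the conjugation step to make $\sigma$ commute with $\sigma_c$ is unnecessary: $\sigma\sigma_c$ is holomorphic whether or not they commute, and you never use that $\theta$ is an involution, only that it is algebraic. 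Second, your justification of (b) via ``inner automorphisms, pinned automorphisms, and diagram automorphisms'' is not quite the right statement for reductive groups with positive-dimensional center (e.g.\ $g\mapsto \det(g)^{-1}g$ on $\GL_2$ is neither); the fact is still true, but the cleanest proof of it is exactly the paper's embedding argument applied to a holomorphic $\theta$ — which is why the paper proves the lemma directly and then remarks that (b) follows as a variant, rather than the other way around. Third, your route quietly imports Mostow-level input (existence/conjugacy of compact forms, and that the compact form is the conjugation of a real algebraic group), which is far more than the statement needs; the paper's proof is self-contained and elementary. Your uniqueness argument via Zariski density of $G(\C)$ is fine and matches what the paper leaves implicit.
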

\begin{proof}
Fix a representation $\rho : G \rightarrow \GL(V)$, where $V$ is a
complex vector space of finite dimension, such that $\rho$ is a
closed immersion \cite{borelbook}*{Proposition 1.10}.
Suppose that $\varphi : G \rightarrow G$ is an antiholomorphic involutive automorphism.
Choose an arbitrary real structure on $V$, and let $\sigma_V$ denote complex conjugation $\GL(V) \rightarrow \GL(V)$ with respect to this real structure.
Then $\sigma_V \circ \rho \circ \varphi$ is a holomorphic representation of $G$, so it is algebraic and $\varphi$ is algebraic conjugate linear. 
\end{proof}
The Lemma justifies the following elementary definition of real forms.

\begin{definition}
\label{d:realform}
A real form of $G$ is an antiholomorphic involutive automorphism $\sigma$ of $G$.
Two real forms are equivalent if they are conjugate by an inner automorphism.
Write $[\sigma]$ for the equivalence class of $\sigma$. 

We say two real forms $\sigma_1,\sigma_2$ are inner to each other, or
in the same inner class, if $\sigma_1\sigma_2\inv$ is an inner
automorphism of $G$. This is well defined on the level of equivalence
classes.
\end{definition}
See Remark \ref{r:notserre} for a subtle point regarding this notion of equivalence.

If $\sigma$ is a real form of $G$, let $G(\R)=G^\sigma$ be the fixed points  of $\sigma$. This is a real
Lie group, with finitely many connected components.

We turn now to compact real forms and Cartan involutions.  If $G$ is
connected these results are well known. The general case is due to Mostow \cite{mostow}.

\begin{definition}
\label{d:compact}
A real form $\sigma$ of $G$ is said to be a \emph{compact real form}
if $G^\sigma$ is compact and meets every component of $G$.
\end{definition}

Mostow's definition \cite{mostow}*{Section 2} of compact real form
refers to the subgroup $G^\sigma$, rather than the automorphism $\sigma$.
Let us check that our definition is equivalent to this.

\begin{lemma}
For any complex reductive group $G$, the map $\sigma \mapsto G^{\sigma}$ is a bijection
between the set of compact real forms of $G$, in the sense of Definition \ref{d:compact},
to the set of compact real forms of $G$, in the sense of \cite{mostow}.
\end{lemma}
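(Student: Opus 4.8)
The plan is to treat this as a formal dictionary between antiholomorphic involutions and their fixed-point subgroups, given the classical connected theory and Mostow's existence results. Write $\mathfrak g=\Lie(G)$, and for a real form $\sigma$ set $\mathfrak g_0=\Lie(G^\sigma)$; since $d\sigma$ is an antilinear involution of $\mathfrak g$, $\mathfrak g_0$ is a real form of $\mathfrak g$, and because $G^\sigma$ is a compact Lie group, $\mathfrak g_0$ is of compact type, hence a \emph{compact} real form of $\mathfrak g$. Thus $(G^\sigma)^0$ is a compact real form of $G^0$ in the usual sense, and together with the hypothesis that $G^\sigma$ meets every component of $G$ this is exactly what Mostow requires of the subgroup; so $\sigma\mapsto G^\sigma$ does land in the set of compact real forms in the sense of \cite{mostow}.

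For injectivity, suppose $\sigma_1,\sigma_2$ are compact real forms with $G^{\sigma_1}=G^{\sigma_2}=:U$. A maximal compact subgroup of the connected reductive group $G^0$ is Zariski dense in $G^0$ (its Lie algebra is $\C$-spanning and its Zariski closure is a closed subgroup with full Lie algebra), so $U^0$ is Zariski dense in $G^0$; since $U$ meets every component, $U$ is Zariski dense in $G$. Now $\sigma_1\sigma_2\inv$ is a holomorphic automorphism of $G$, hence algebraic by Lemma \ref{l:antiholconjlin}, and it fixes $U$ pointwise, so it fixes the Zariski closure $G$ pointwise; thus $\sigma_1=\sigma_2$. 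The same argument shows that for a given compact real form $U^0$ of $G^0$ there is at most one antiholomorphic involution $\sigma^0$ of $G^0$ with $(G^0)^{\sigma^0}=U^0$, and existence of such a $\sigma^0$ — complex conjugation on the complexification of the compact connected group $U^0$ — is the classical connected case; note $d\sigma^0$ is complex conjugation of $\mathfrak g$ with respect to $\mathfrak u:=\Lie(U^0)$.

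For surjectivity, fix a subgroup $U$ that is a compact real form of $G$ in Mostow's sense, let $U^0$ be its identity component, and let $\sigma^0$ be the associated antiholomorphic involution of $G^0$ from the previous paragraph. For $u\in U$, conjugation $\Int(u)$ is a holomorphic automorphism of $G^0$ preserving $U^0$, so its differential preserves $\mathfrak u$ and therefore commutes with $d\sigma^0$; since $G^0$ is connected, $\Int(u)\circ\sigma^0=\sigma^0\circ\Int(u)$. Define $\sigma\colon G\to G$ by $\sigma(gu)=\sigma^0(g)\,u$ for $g\in G^0$, $u\in U$. This is well defined because $\sigma^0$ fixes $G^0\cap U=U^0$; it is a group automorphism because $\sigma^0$ commutes with $\Int(u)$ for all $u\in U$; it is antiholomorphic because right translation by $u$ is biholomorphic and $\sigma^0$ is antiholomorphic; and $\sigma^2=\mathrm{id}$ since $(\sigma^0)^2=\mathrm{id}$. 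Finally $\sigma(gu)=gu$ iff $\sigma^0(g)=g$ iff $g\in U^0$, so $G^\sigma\cap(G^0u)=U^0u=U\cap(G^0u)$; taking the union over a set of component representatives $u\in U$ (which exhaust the components of $G$) gives $G^\sigma=U$. Hence $\sigma$ is a compact real form in the sense of Definition \ref{d:compact} with $G^\sigma=U$. The only step with genuine content is this construction of $\sigma$ across components, and there the crux is the commutation $\Int(u)\circ\sigma^0=\sigma^0\circ\Int(u)$, which forces the formula $\sigma(gu)=\sigma^0(g)u$ to be a homomorphism; alternatively, a compatible $\sigma$ can be read off directly from Mostow's construction of the subgroup $U$. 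I expect this extension-across-components step to be the main, though not difficult, obstacle.
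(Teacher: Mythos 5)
Your proof is correct; all three parts go through. It is worth comparing routes, since you diverge from the paper in two places. For injectivity the paper observes that $d\sigma$ is forced by the decomposition $\Lie(G)=\Lie(G^\sigma)\oplus i\Lie(G^\sigma)$, hence $\sigma|_{G^0}$ is determined, and $\sigma$ on the remaining components is then pinned down by the requirement that it fix $G^\sigma$ pointwise; your Zariski-density argument reaches the same conclusion but needs the extra input that the holomorphic automorphism $\sigma_1\sigma_2\inv$ is algebraic (true, and remarked in the paper just after Lemma \ref{l:antiholconjlin}), which the paper's version avoids. The genuine divergence is surjectivity: the paper chooses a closed embedding $\rho:G\hookrightarrow \GL_n(\C)$ with $\rho(U)\subset U(n)$ and pulls back the standard unitary conjugation, checking that $\rho(G)$ is stable because $\rho(G^0)$ is stable for Lie-algebra reasons and $G=G^0U$ with $\rho(U)$ fixed pointwise; this gives $U\subset G^\sigma$ and the Cartan decomposition is then invoked to upgrade to equality. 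You instead take the connected existence theorem as a black box to produce $\sigma^0$ on $G^0$ and extend across components by $\sigma(gu)=\sigma^0(g)u$. Your two key checks --- well-definedness via $G^0\cap U=U^0$ (which rests on maximality of $U^0$ in $G^0$, i.e.\ on the Cartan decomposition of Lemma \ref{l:cartandecomp}) and the homomorphism property via $\Int(u)\circ\sigma^0=\sigma^0\circ\Int(u)$ --- are exactly where the content lies, and you supply both. Your construction buys $G^\sigma=U$ on the nose, at the price of citing the connected case; the paper's unitary-embedding construction is self-contained and handles all components uniformly, at the price of a final Cartan-decomposition step to identify $G^\sigma$ exactly.
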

\begin{proof}
If $\sigma$ is any real form of $G$, then $\dim_{\R} G^{\sigma} = \dim_{\C} G$, by Hilbert's
Theorem 90 applied to the action of $\sigma$ on $\Lie(G)$.
Choose a faithful algebraic representation $\rho : G \hookrightarrow \GL(V)$.
If $K$ is any compact subgroup of $G$, then $V$ admits a hermitian form for which $\rho(K)$ is unitary.
In particular we see that $\Lie(K) \cap i \Lie(K) = 0$.
These two facts imply that for any compact real form $\sigma$ of $G$, $G^{\sigma}$ is a compact real form of $G$ in the sense of \cite{mostow}.

Let us now check that $\sigma \mapsto G^{\sigma}$ is injective.
The action of $\sigma$ on $G^0$ is determined by its action on $\Lie(G) = \Lie(G^{\sigma}) \oplus i \Lie(G^{\sigma})$.
Once $\sigma|_{G^0}$ is determined, $\sigma$ is determined by the requirement that it fixes $G^{\sigma}$ pointwise,
since $G^{\sigma}$ meets every connected component of $G$.

Finally we show that $\sigma\mapsto G^\sigma$ is surjective.  Suppose
$K$ is a compact real form of $G$ in the sense of \cite{mostow}.
Choose $\rho$ and a hermitian form on $V$ as above.  Choosing an
orthonormal basis for $V$, we can view $\rho$ as a closed embedding
$G \rightarrow \GL_n(\C)$ such that $\rho(K) \subset U(n)$.  Let
$\tau(g)={}^tg^{-1}$ $(g\in GL_n(\C))$.  Then $\rho(G^0)$ is stable under
$\tau$, since $\Lie(\rho(G)) = \Lie(\rho(K)) \oplus i \Lie(\rho(K))$,
and $d\tau$ fixes $\Lie(\rho(K)) \subset \mathfrak{u}(n)$ pointwise.
Furthermore $\rho(G)$ is stable under $\tau$ since $\tau$ fixes $\rho(K)$
pointwise, and $G=G^0K$.  Pull back $\tau$ to $G$ to define
$\sigma=\rho\inv\circ\tau\circ\rho$.  This is a compact real form of
$G$, and $K\subset G^\sigma$.  By the Cartan decomposition
\cite{mostow}*{Lemma 2.1} $G^{\sigma} \cap G^0 = K \cap G^0$, and this
implies $G^{\sigma} = K$.
\end{proof}

Using the Lemma we will refer to $\sigma$ or  $K=G^\sigma$ as a compact real form of $G$.

The \emph{Cartan decomposition} holds in our setting (see \cite{mostow}*{Lemma 2.1}).

\begin{lemma}[Mostow]
\label{l:cartandecomp}
Suppose $\sigma$ is a compact real form of $G$. 
Let $K = G^{\sigma}$ and $\P = \Lie(G)^{-\sigma} = i \Lie(K)$. Then 
the map $(k, X) \mapsto k \exp(X)$ is a diffeomorphism 
from $K \times \P$ onto $G$ considered as a real Lie
group. Furthermore $K$ is a maximal compact subgroup of $G$.
\end{lemma}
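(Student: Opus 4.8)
The plan is to reduce to the linear case, exactly as in the surjectivity argument of the preceding lemma, and then quote the classical polar decomposition in $\GL_n(\C)$. First I would choose a faithful algebraic representation $\rho : G \hookrightarrow \GL(V)$ and, using that $K = G^\sigma$ is compact, a $K$-invariant hermitian form on $V$; picking an orthonormal basis identifies $G$ with a closed subgroup of $\GL_n(\C)$ such that $\rho(K) \subset U(n)$. As observed already, $\rho(G)$ is then stable under the Cartan involution $\tau(g) = {}^t\bar g^{-1}$ of $\GL_n(\C)$, and under $\rho$ the involution $\sigma$ corresponds to $\tau$ on $\rho(G)$; moreover $\Lie(\rho(G)) = \Lie(\rho(K)) \oplus \P'$ where $\P' = i\,\Lie(\rho(K))$ consists of hermitian matrices.

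Next I would invoke the classical polar decomposition: every $g \in \GL_n(\C)$ factors uniquely as $g = u\exp(X)$ with $u \in U(n)$ and $X \in \GL_n(\C)$ hermitian, and $(u,X) \mapsto u\exp(X)$ is a diffeomorphism $U(n) \times \{\text{hermitian matrices}\} \to \GL_n(\C)$. The content is to show this restricts to $\rho(G)$: given $g \in \rho(G)$, write $g = u\exp(X)$ in $\GL_n(\C)$; then $g\tau(g)^{-1} = u\exp(2X)u^{-1}$, but also $g\,{}^t\bar g = g\sigma(g)^{-1}$ lies in $\rho(G)$ since $\rho(G)$ is $\tau$-stable, so $\exp(2X)$ (being conjugate into $\rho(G)$, or directly: $({}^t\bar g\, g)$ lies in $\rho(G)$ and equals $\exp(2X)$ up to the unitary factor) lies in $\rho(G)$. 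The standard one-parameter-subgroup argument — $\exp(2X) \in \rho(G)$ forces $\exp(tX) \in \rho(G)$ for all real $t$, hence $X \in \Lie(\rho(G))$, hence $X \in \P'$ since $X$ is hermitian — then gives $u = g\exp(-X) \in \rho(G) \cap U(n) = \rho(K)$. Transporting back through $\rho$, this says $(k,Y) \mapsto k\exp(Y)$ is a bijection $K \times \P \to G$, and it is a diffeomorphism because it is the restriction of a diffeomorphism to a closed submanifold which is carried onto a closed submanifold.

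Finally, maximal compactness of $K$: if $K \subsetneq L$ with $L \subset G$ compact, pick $\ell \in L \setminus K$ and write $\ell = k\exp(Y)$ with $Y \in \P$, $Y \neq 0$; then $\exp(Y) = k^{-1}\ell$ generates a subgroup of the compact group $\langle K, L\rangle$, but $\{\exp(tY) : t \in \R\}$ is noncompact since $Y \neq 0$ is a nonzero hermitian matrix under $\rho$ — a contradiction. I expect the main obstacle to be the step showing the polar factor $X$ of an element of $\rho(G)$ actually lies in $\Lie(\rho(G))$: one must argue carefully that $\exp(2X) \in \rho(G)$ (using $\tau$-stability of $\rho(G)$ and uniqueness of polar decomposition in $\GL_n(\C)$) and then pass from $\exp(2X)$ to $X$ via the closed-subgroup/one-parameter-subgroup argument. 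Everything else — the diffeomorphism property and maximal compactness — follows formally from this plus the classical $\GL_n$ statement. Since the paper attributes this to \cite{mostow}*{Lemma 2.1}, I would simply cite that reference for the details and only sketch the reduction above.
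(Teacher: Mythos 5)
Your proof is correct, and it is the standard argument behind the reference the paper itself gives for this lemma (\cite{mostow}*{Lemma 2.1}); the paper supplies no proof of its own beyond that citation, so there is nothing to diverge from. The one step that deserves more care than your phrasing suggests is the passage from $\exp(2X)\in\rho(G)$ to $\exp(tX)\in\rho(G)$: this is false for subgroups that are merely closed in the Lie-group topology (the cyclic group generated by $\exp(2X)$ is a counterexample), and what makes it work is that $\rho(G)$ is Zariski-closed, $\rho$ being a closed immersion of algebraic groups --- exactly the argument the paper uses in the proof of Lemma \ref{l:subcpctrealform}. With that point made explicit, the reduction to the polar decomposition of $\GL_n(\C)$, the diffeomorphism claim, and the maximal-compactness argument are all fine.
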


Although we will not use it, it is not difficult to check that the
complexification functor \cite{serre_galois}*{Section III.4.5}, from
the category of compact Lie groups to that of complex reductive groups
endowed with a compact real form, induces a bijection on the level of
isomorphism classes.

It is important to know the existence of compact real forms. See   \cite{mostow}*{Lemma 6.1}.

\begin{theorem}[Weyl, Chevalley, Mostow]
Every complex reductive group has a compact real form.
\end{theorem}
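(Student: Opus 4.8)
The plan is to build a compact real form by the standard averaging argument, reduced to the case of a subgroup of $\GL_n(\C)$ where the conjugate-transpose involution is available, and then to propagate it from the identity component to all of $G$.

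First I would fix a faithful algebraic representation $\rho : G \hookrightarrow \GL(V)$, which exists by \cite{borelbook}*{Proposition 1.10}, and view $G$ as a closed subgroup of $\GL(V)$. The key classical input is that the connected complex reductive group $G^0$ admits a compact real form; equivalently, after choosing coordinates on $V$ suitably, $\rho(G^0)$ is stable under the involution $\tau(g) = {}^t\bar g^{-1}$ of $\GL_n(\C)$, and $K^0 := \rho(G^0)^\tau$ is a maximal compact subgroup of $\rho(G^0)$. The existence of such a coordinate system is the Weyl--Chevalley theorem for connected groups; one way to see it is to pick any maximal compact subgroup $U$ of $G^0$ (these exist and are conjugate), average a Hermitian form on $V$ over $U$, and choose an orthonormal basis, so that $\rho(U) \subset U(n)$ and $\Lie(\rho(G^0)) = \Lie(\rho(U)) \oplus i\,\Lie(\rho(U))$; then $d\tau$ fixes $\Lie(\rho(U))$ pointwise and negates $i\,\Lie(\rho(U))$, so $\Lie(\rho(G^0))$ is $d\tau$-stable, hence $\rho(G^0)$ is $\tau$-stable. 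This is precisely the argument already used in the surjectivity step of the earlier lemma, so I would simply invoke it.

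Next I would upgrade this to all of $G$. Since $G/G^0$ is finite and $G = G^0 \cdot F$ for a suitable finite set of coset representatives, the subtlety is to arrange that the coordinates can be chosen so that $\rho(G)$ (not just $\rho(G^0)$) is $\tau$-stable and that $G^\sigma$ meets every component, where $\sigma = \rho^{-1}\circ\tau\circ\rho|_G$. This is exactly Mostow's result \cite{mostow}*{Lemma 6.1}: given a maximal compact subgroup $K$ of $G$ (which by Mostow meets every component of $G$), average a Hermitian form over $K$, pick an orthonormal basis, so $\rho(K)\subset U(n)$; then $d\tau$ fixes $\Lie(\rho(K))$ pointwise, so $\rho(G^0)$ is $\tau$-stable as above, and $\rho(G)$ is $\tau$-stable because $\tau$ fixes $\rho(K)$ pointwise and $G = G^0 K$. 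The resulting $\sigma$ is an antiholomorphic (indeed algebraic conjugate-linear) involutive automorphism of $G$, $G^\sigma \supseteq K$ is compact, and $G^\sigma$ meets every component since $K$ does; by the Cartan decomposition (Lemma \ref{l:cartandecomp}) in fact $G^\sigma = K$. Thus $\sigma$ is a compact real form in the sense of Definition \ref{d:compact}.

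The main obstacle is not the averaging — that is routine — but the disconnected case: ensuring that a compact subgroup meeting every component of $G$ exists, and that one can simultaneously diagonalize the situation so that $\tau$ both preserves $\rho(G)$ and fixes such a subgroup pointwise. For connected $G$ this is the classical Weyl unitarian trick; for disconnected $G$ one genuinely needs Mostow's structure theory of maximal compact subgroups of (possibly disconnected) Lie groups with finitely many components, which guarantees a maximal compact $K$ meeting every component and up to which one can average. Given that, the rest of the proof is the short computation above. I would therefore present the proof as: (i) cite Mostow for the existence of $K$ meeting every component; (ii) run the averaging/orthonormal-basis argument to make $\rho(G)$ $\tau$-stable with $\rho(K)$ fixed pointwise; (iii) pull back $\tau$ to get $\sigma$ and check it is a compact real form using Lemma \ref{l:cartandecomp}.
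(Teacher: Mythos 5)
The paper offers no proof of this theorem; it is stated with a bare citation to \cite{mostow}*{Lemma 6.1}. Your reconstruction is essentially sound, and its main virtue is recognizing that the hard work is already present in the paper: once one has a maximal compact subgroup $K$ of $G$ meeting every component and satisfying $\Lie(G)=\Lie(K)\oplus i\,\Lie(K)$, the averaging/orthonormal-basis computation producing a $\tau$-stable $\rho(G)$ and the verification $G^\sigma=K$ are word-for-word the surjectivity step of the lemma identifying the two notions of compact real form (and your $\tau(g)={}^t\bar g^{-1}$ is the correct antiholomorphic involution; the paper's printed ${}^tg^{-1}$ is evidently a typo). Two citations need straightening out, though. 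First, the equality $\Lie(\rho(G^0))=\Lie(\rho(U))\oplus i\,\Lie(\rho(U))$ for a maximal compact subgroup $U$ of $G^0$ is \emph{not} a consequence of averaging a Hermitian form --- averaging only puts $\rho(U)$ inside $U(n)$ --- and your ``one way to see it'' sketch reads as if it were; this equality is precisely the classical content of the connected case (Weyl's existence of compact real forms of semisimple Lie algebras, plus the central torus), and must be invoked as such rather than derived from the surrounding manipulations. Second, citing \cite{mostow}*{Lemma 6.1} \emph{inside} the proof is circular, since that lemma is the statement being proved; what you actually need from Mostow, or from \cite{hochschild}*{Ch.~XV}, is the Cartan--Iwasawa--Malcev structure theory for Lie groups with finitely many components (existence and conjugacy of maximal compact subgroups, each meeting every component). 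Note also that Theorem \ref{t:conjcpctsubgps} of the paper cannot serve this purpose, as it presupposes a compact real form. With those two inputs correctly attributed, the rest of your outline compiles into a complete proof.
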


We turn next to uniquess of the compact form.
See \cite{mostow}*{Theorem 3.1}, and  \cite{hochschild}*{Ch. XV} for a proof which handles one case overlooked 
in \cite{mostow}. 

\begin{theorem}[Cartan, Hochschild, Mostow] 
\label{t:conjcpctsubgps}
Let $\sigma$ be a compact real form of a complex reductive group $G$, and set $K=G^\sigma$.
Let $L$ be a compact subgroup of $G$.
Then there exists $g \in G^0$ such that $gL g^{-1} \subset K$.
The compact real forms of $G$ are unique up to conjugation by $G^0$.
\end{theorem}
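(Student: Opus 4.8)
The plan is to exhibit $G/K$ as a complete, simply connected Riemannian manifold of nonpositive curvature on which all of $G$ acts by isometries, and then apply the Cartan fixed point theorem. First I would take the closed embedding $\rho\colon G\hookrightarrow\GL_n(\C)$ and Hermitian form on $\C^n$ with $\rho(K)\subset U(n)$, the antiinvolution $\tau$ of $\GL_n(\C)$, and the identity $\sigma=\rho^{-1}\circ\tau\circ\rho$, all exactly as in the proof of the lemma above, so that $\rho(G)$ is $\tau$-stable. Let $\mathcal P_n=\GL_n(\C)/U(n)$ be the symmetric space of positive-definite Hermitian forms on $\C^n$; it is complete, simply connected, and of nonpositive curvature, $G$ acts on it through $\rho$, and the stabilizer in $\rho(G)$ of the base point $x_0$ is $\rho(G)^{\tau}=\rho(G^\sigma)=\rho(K)$. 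Thus the orbit $\mathcal O=\rho(G)\cdot x_0$ is $G$-equivariantly identified with $G/K$; since $\tau$ induces the geodesic symmetry of $\mathcal P_n$ at $x_0$ and $\rho(G)$ is $\tau$-stable, $\mathcal O$ is a totally geodesic submanifold, hence itself a complete (being homogeneous), simply connected (via the Cartan decomposition $G=K\exp(\P)$, which identifies $G/K$ with $\P$), nonpositively curved manifold.

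Next I would record a bookkeeping point: $G^0$ already acts transitively on $G/K$. Indeed, by the Cartan decomposition every coset is $k\exp(X)K$ with $k\in K$, $X\in\P$, and $k\exp(X)K=\exp(\Ad(k)X)K$ with $\Ad(k)X\in\P$ because $K$ normalizes $\P=i\Lie(K)$; so $G/K=\{\exp(X)K\mid X\in\P\}$ and $\exp(\P)\subset G^0$ surjects onto $G/K$. Now given the compact subgroup $L$, its orbit on $\mathcal O\cong G/K$ is bounded, so by the Cartan fixed point theorem (the circumcenter of the closure of the orbit is $L$-fixed) $L$ fixes some point, which by the previous sentence may be taken to be $g_0K$ with $g_0\in G^0$. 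Its stabilizer in $G$ is $g_0Kg_0^{-1}$, so $g_0^{-1}Lg_0\subset K$, and $g=g_0^{-1}\in G^0$ is the element sought in the first assertion.

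For uniqueness, let $\sigma'$ be another compact real form and put $K'=G^{\sigma'}$, a compact subgroup meeting every component. By the first assertion there is $g\in G^0$ with $gK'g^{-1}\subset K$; but $gK'g^{-1}$ is the fixed-point group of the compact real form $\int(g)\circ\sigma'\circ\int(g)^{-1}$, hence a maximal compact subgroup of $G$ by Lemma \ref{l:cartandecomp}, and being contained in the compact subgroup $K$ it must equal $K$. Since $\int(g)\circ\sigma'\circ\int(g)^{-1}$ is then a compact real form with fixed-point group $gK'g^{-1}=K=G^\sigma$, the injectivity of $\sigma\mapsto G^\sigma$ proved in the lemma above forces $\int(g)\circ\sigma'\circ\int(g)^{-1}=\sigma$; thus $\sigma$ and $\sigma'$ are conjugate under $G^0$.

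The main obstacle is the geometric input: confirming that $\mathcal O$ is totally geodesic in $\mathcal P_n$ — so that it inherits completeness and nonpositive curvature — and that the Cartan fixed point theorem applies. The remaining steps (transitivity of $G^0$ on $G/K$, the maximality argument, and the passage from conjugating subgroups to conjugating involutions) are routine once the earlier lemmas are available. One could also avoid Riemannian geometry and argue directly with the contraction properties of $g\mapsto g^{*}g$ on $\mathcal P_n$, following Hochschild and Mostow, but the fixed-point route is the shortest.
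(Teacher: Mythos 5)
The paper does not actually prove Theorem \ref{t:conjcpctsubgps}; it quotes it from the literature, citing Mostow's \emph{Self-adjoint groups} (Theorem 3.1) and Hochschild, Ch.~XV. So there is no in-paper argument to match, and your proposal should be judged as a free-standing proof --- which it essentially is, and it is correct. Your route is the classical Cartan fixed-point argument: realize $G/K$ as a closed totally geodesic (hence complete, nonpositively curved, and simply connected via $G/K\cong\P$) subspace of the Hadamard manifold $\mathcal P_n=\GL_n(\C)/U(n)$, and take the circumcenter of an $L$-orbit. The two points you flag as needing care are indeed the only nontrivial geometric inputs, and both are standard: the orbit $\rho(G)\cdot x_0=\exp\bigl(\Lie(\rho(G))^{-\tau}\bigr)$ is the exponential of a Lie triple system in the Hermitian matrices, hence totally geodesic (equivalently, your symmetry argument: it is stable under the geodesic symmetry at each of its points); and the Bruhat--Tits/Cartan fixed point theorem applies to any bounded isometric action on such a space. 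Your bookkeeping that $\exp(\P)\subset G^0$ already acts transitively on $G/K$, so the fixed coset can be written $g_0K$ with $g_0\in G^0$, is exactly what is needed to get conjugation by $G^0$ rather than by $G$ --- a point that matters in the disconnected setting and that the paper's later arguments rely on. The uniqueness step (maximality of $gK'g^{-1}$ from Lemma \ref{l:cartandecomp}, then injectivity of $\sigma\mapsto G^\sigma$ from the preceding lemma) is also correct; note only that one should observe $gK'g^{-1}$ still meets every component of $G$, which is automatic since conjugation induces an automorphism of $G/G^0$. By comparison, Mostow's cited proof replaces the fixed point theorem by a direct convexity/displacement argument on the cone of positive definite matrices (the ``contraction'' route you mention as an alternative), and Hochschild's treatment is more algebraic; your version buys brevity at the cost of importing the Hadamard-manifold machinery.
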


Fix a compact real form $K$ of $G$.   The center $Z(G^0)$ of
$G^0$ is  a normal subgroup of $G$. 
If $G$ is connected it is well known that $Z(G)=Z(K)A$ 
where $A=\exp(i\Lie(Z(G^0)))\subset \exp(\P)$ is a vector group. 
Therefore in general we have
\begin{subequations}
\renewcommand{\theequation}{\theparentequation)(\alph{equation}}  
\label{e:center}
\begin{equation}
Z(G^0)=Z(K^0)A.
\end{equation}
Since $G=KG^0$ we have 
(writing superscript for invariants): $Z(G^0)^K=Z(G^0)^G$, independent of the choice
of $K$.  Also $K/K^0\simeq G/G^0$ acts on $Z(G^0)$, normalizing $A$, and 
\begin{equation}
Z(G)\cap G^0=Z(G^0)^{G/G^0}=Z(K^0)^{K/K^0}A^{G/G^0}=(Z(K)\cap K^0)A^{G/G^0}
\end{equation}
\end{subequations}

\begin{lemma} \label{l:normcpct}
Suppose $K$ is a compact real form of $G$. 
Then the Cartan decomposition of $\Norm_G(K)$ is 
 $\Norm_G(K) =K A^{G/G^0}$.
\end{lemma}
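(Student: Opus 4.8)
Proof plan for Lemma \ref{l:normcpct} ($\Norm_G(K) = KA^{G/G^0}$ for $K$ a compact real form of $G$).

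The inclusion $\supseteq$ is the easy direction: $K$ normalizes itself, and $A^{G/G^0} \subseteq Z(G) \cap G^0$ by \eqref{e:center}, so $A^{G/G^0}$ is central in $G$ and in particular normalizes $K$. For the reverse inclusion, the plan is as follows. First I would reduce to understanding the Cartan decomposition of $\Norm_G(K)$: since $K$ is a maximal compact subgroup of $G$ (Lemma \ref{l:cartandecomp}), any element $n \in \Norm_G(K)$ can be written uniquely as $n = k\exp(X)$ with $k \in K$, $X \in \P = i\Lie(K)$, and it suffices to determine which $X \in \P$ arise. Replacing $n$ by $k\inv n$, I reduce to the case $n = \exp(X)$, $X \in \P$, with $\exp(X)$ normalizing $K$; the goal is to show $\exp(X) \in A^{G/G^0}$, equivalently $X \in \Lie(A^{G/G^0}) = i\Lie(Z(G^0))^{G/G^0}$.

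The key computation: if $\exp(X)$ normalizes $K$, then conjugation by $\exp(X)$ preserves $K$, hence $\Ad(\exp(X)) = \exp(\ad X)$ preserves $\k = \Lie(K)$, and therefore also preserves $\P = i\k$. Now decompose $\g = \k \oplus \P$ and look at the action of the self-adjoint operator $\ad X$ (self-adjoint with respect to an invariant inner product making $\k$ compact, e.g. coming from a $K$-invariant hermitian form as in the proof of Theorem \ref{t:conjcpctsubgps}): $\ad X$ swaps $\k$ and $\P$ in the sense that $\ad X(\k) \subseteq \P$ and $\ad X(\P) \subseteq \k$ when $X \in \P$, because $[\P,\P] \subseteq \k$ and $[\P,\k]\subseteq \P$. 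So $\exp(\ad X) = \cosh(\ad X) + \sinh(\ad X)$ preserves $\k$ if and only if the odd part $\sinh(\ad X)$ vanishes on $\k$, i.e. $\ad X = 0$ on $\k$ (since $\sinh$ is injective on the real spectrum of the self-adjoint operator $\ad X$ restricted to $\k \oplus \P$, or more concretely since $[X,[X,\cdot]]$ acting on $\k$ with $[X,\cdot]:\k\to\P$... one shows $\ad X|_\k = 0$). Thus $X$ is central in $\k$, hence central in $\g = \k \oplus i\k$, so $X \in \Lie(Z(G^0))$, and $X \in \P$ forces $X \in i\Lie(Z(K^0)) \cap \P$; writing $X = iY$ with $Y \in \Lie(Z(K^0))$, we get $\exp(X) \in A$. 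Finally, $\exp(X)$ must also be fixed by $G/G^0$ acting on $Z(G^0)$: since $n = \exp(X)$ normalizes $K$ and $K$ meets every component of $G$, conjugation by any component representative (which we may take in $K$) fixes $\exp(X)$; hence $X \in \Lie(Z(G^0))^{G/G^0}$ and $\exp(X) \in A^{G/G^0}$, as desired.

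The main obstacle I anticipate is the linear-algebra step showing $\ad X|_\k = 0$ rigorously — one has to argue that an element of $\exp(\P)$ cannot normalize $K$ unless it lies in the "flat" central part. I would handle this cleanly by noting that for $X \in \P$, the map $t \mapsto \exp(tX) K \exp(-tX)$ is a path of maximal compact subgroups all equal to $K$ (since $\exp(tX)$ also normalizes $K$ for all $t$ by the group property, as $\Norm_G(K)$ is a group and $\exp(X) \in \Norm_G(K)$ implies... actually one needs the one-parameter subgroup to normalize $K$, which follows because $\overline{\{\exp(tX)\}}$ is abelian and... ) — alternatively, and more safely, differentiate: $\Ad(\exp X)\k = \k$ gives, applying to the Cartan decomposition and using that $\ad X$ interchanges the two summands, the conclusion that $[X,\k] \subseteq \k \cap \P = 0$ directly from $\exp(\ad X)\k\subseteq\k$ together with $\exp(-\ad X)\k \subseteq \k$ and positivity of $\cosh(\ad X) = \tfrac12(\exp(\ad X)+\exp(-\ad X))$ being an isomorphism of $\k$. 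That positivity argument is the crux and should be spelled out carefully.
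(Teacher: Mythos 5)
Your proof is correct in substance but takes a genuinely different route from the paper's. The paper argues entirely at the group level: for $k\in K$ it writes $\exp(X)k\exp(-X)=k'$, rearranges this to $k\exp(-X)=k'\exp(-\Ad(k')\inv X)$, and invokes uniqueness of the Cartan decomposition to conclude $k'=k$ and $\Ad(k)X=X$ for \emph{every} $k\in K$ in one stroke, after which $X\in\Lie(A)^{G/G^0}$ drops out immediately. You instead work at the Lie algebra level, using the grading $\g=\k\oplus\P$, the even/odd decomposition $\exp(\mathrm{ad}\,X)=\cosh(\mathrm{ad}\,X)+\sinh(\mathrm{ad}\,X)$, and real diagonalizability of $\mathrm{ad}\,X$ to get $[X,\k]=0$; that computation is sound, and in fact the direct-sum argument alone already forces $\sinh(\mathrm{ad}\,X)|_{\k}=0$, so the positivity of $\cosh$ that you flag as the crux is not needed. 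The trade-off is that your route only yields invariance of $X$ under $K^0$, so the passage from $\exp(X)\in A$ to $\exp(X)\in A^{G/G^0}$ --- which you assert in a single clause --- genuinely needs its own argument, whereas the paper gets full $K$-invariance for free. The gap is fillable: for $k\in K$ one has $\exp(\Ad(k)X)\exp(-X)=k\exp(X)k\inv\exp(-X)\in K$ because $\exp(X)$ normalizes $K$; since $Z(G^0)$ is normal in $G$ and $\P$ is $\Ad(K)$-stable, both $\Ad(k)X$ and $X$ lie in $\Lie(A)$, so this element equals $\exp(\Ad(k)X-X)\in A\cap K=\{1\}$, giving $\Ad(k)X=X$. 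Spell that out and your proof is complete.
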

\begin{proof}
Since $G=K\exp(\P)$, it suffices to show that $\Norm_G(K) \cap \exp(\P) = A^{G/G^0}$.
Let $X \in \P$ be such that $\exp(X)$ normalizes $K$.
For $k \in K$, there exists $k' \in K$ such that $\exp(X) k \exp(-X) = k'$.
This can be rewritten as
$$ k \exp(-X) = k' \exp(- \Ad(k')^{-1}(X)) $$
so by uniqueness of the Cartan decomposition, $k'=k$ and $\Ad(k)(X)=X$, so  $X$ is invariant under $K$.
The fact that $X$ is invariant under $K^0$ means that $X \in \Lie(A)$, and since $K$ meets every connected component of $G$, $X \in \Lie(A)^{G/G^0}$.
\end{proof}

\begin{lemma}
\label{l:subcpctrealform}
Let $\sigma$ be a compact real form of a real reductive group $G$.
Let $H$ be a $\sigma$-stable algebraic subgroup of $G$. 
Then $H$ is reductive and $\sigma|_H$ is a compact real form of $H$.
\end{lemma}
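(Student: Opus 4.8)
The plan is to reduce the statement to two independent facts: that a $\sigma$-stable algebraic subgroup $H$ of a reductive group is reductive, and that once we know this, the restriction $\sigma|_H$ is automatically a compact real form of $H$ in the sense of Definition \ref{d:compact}. Note $\sigma|_H$ certainly is an antiholomorphic involutive automorphism of $H$, so it is a real form of $H$; the content is that $H^{\sigma}$ is compact and meets every component of $H$.

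For reductivity, I would argue as follows. Since $\sigma$ is a compact real form of $G$, Lemma \ref{l:cartandecomp} gives $G = K\exp(\P)$ with $K = G^\sigma$ compact and $\P = \Lie(G)^{-\sigma} = i\Lie(K)$. Fix a faithful algebraic representation $\rho : G \hookrightarrow \GL(V)$ and, as in the proof that Mostow's notion agrees with Definition \ref{d:compact}, choose a Hermitian form on $V$ making $\rho(K)$ unitary; then $\sigma$ is the pullback of $g \mapsto {}^t\bar g^{-1}$ on $\GL(V)$, and $\Lie(H) = \Lie(H)^{\sigma} \oplus \Lie(H)^{-\sigma}$ is stable under the corresponding Cartan involution $X \mapsto -{}^t\bar X$ of $\Lie(\GL(V))$, i.e. under $-\sigma$ on $\Lie(G)$. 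A Lie subalgebra of $\mathfrak{gl}(V)$ stable under $X \mapsto -{}^t\bar X$ is reductive in $\mathfrak{gl}(V)$ and hence reductive; so $\Lie(H)$ is reductive, and since $H$ is an algebraic subgroup of the reductive group $G$ this forces $H^0$ to be reductive (its unipotent radical would have Lie algebra an ideal of $\Lie(H)$ consisting of nilpotents, hence zero), so $H$ is reductive. (Alternatively one can invoke that $H$, being $\sigma$-stable, inherits the real form $\sigma|_H$ and $H^{\sigma}$ is compact — see below — and a complex algebraic group possessing a compact real form spanning its Lie algebra over $\R \oplus i\R$ is reductive; but I prefer to get reductivity first and then compactness, to avoid circularity.)

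For compactness of $H^{\sigma}$: $H^{\sigma} = H \cap G^{\sigma} = H \cap K$ is a closed subgroup of the compact group $K$, hence compact. It remains to see $H^{\sigma}$ meets every connected component of $H$. Here I would use the Cartan decomposition of $H$ itself. Because $\Lie(H)$ is stable under $\sigma$ and $-\sigma$, we have $\Lie(H) = (\Lie(H)\cap\Lie(K)) \oplus (\Lie(H)\cap\P)$; write $\k_H = \Lie(H)\cap\Lie(K)$ and $\P_H = \Lie(H)\cap\P$. One checks, exactly as in Lemma \ref{l:cartandecomp}/\cite{mostow}*{Lemma 2.1}, that the multiplication map $(H\cap K) \times \P_H \to H$, $(k,X)\mapsto k\exp(X)$, is a diffeomorphism: it is the restriction to the $\sigma$-stable subgroup $H$ of the global Cartan decomposition $K \times \P \to G$, and a standard polar-decomposition argument (the exponential of the self-adjoint part of any $h \in H \subset \GL(V)$ again lies in $H$ because $H$ is Zariski-closed and stable under ${}^t\bar{\cdot}$) shows it maps onto $H$ and is injective with everywhere-invertible differential. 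Given this, $H$ deformation retracts onto $H\cap K$, so $H/H^0 \cong (H\cap K)/(H\cap K)^0$; in particular $H\cap K = H^{\sigma}$ surjects onto $H/H^0$, i.e. meets every component of $H$. Hence $\sigma|_H$ is a compact real form of $H$.

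The main obstacle is the Cartan decomposition of $H$ — specifically verifying that $\exp(\P_H) \subset H$ and that $H \subset (H\cap K)\exp(\P_H)$, i.e. that the polar decomposition of an element of $H$ stays inside $H$. This is where one uses that $H$ is \emph{algebraic}: for $h \in H$, the unipotent and semisimple parts and the positive-definite factor in the polar decomposition relative to the chosen Hermitian form are limits of polynomials in $h$ (or are characterized by Zariski-closed conditions), so they remain in $H$. Once this point is settled, the rest is bookkeeping with the already-established Cartan decomposition machinery of \cite{mostow}, and reductivity of $\Lie(H)$ follows from the self-adjointness of $\Lie(H)$ under $X\mapsto -{}^t\bar X$.
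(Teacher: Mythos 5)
Your proof is correct. For the half of the lemma that says $H^{\sigma}$ meets every component of $H$, your argument is essentially the paper's: write $h=k\exp(X)$ via the global Cartan decomposition of $G$, note $\sigma(h)^{-1}h=\exp(2X)\in H$ by $\sigma$-stability, and use Zariski-closedness of $H$ to conclude $\exp(tX)\in H$ for all $t$, hence $X\in\Lie(H)^{-\sigma}$ and $k\in H^{\sigma}$; the paper simply does this directly rather than packaging it as a Cartan decomposition of $H$. (One caution on the crux step: of your two justifications, only the Zariski-closure one is airtight --- ``limits of polynomials in $h$'' produces matrices that need not lie in the group $H$, whereas the integer powers $\exp(2nX)$ do lie in $H$ and their Zariski closure contains the whole one-parameter subgroup.) Where you genuinely diverge is reductivity: you fix a faithful representation, make $\Lie(H)$ self-adjoint for a $K$-invariant Hermitian form, and invoke the classical fact that a self-adjoint linear Lie algebra is reductive, then transfer this back to the group by ruling out a unipotent radical at the Lie algebra level. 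The paper instead argues purely group-theoretically: the unipotent radical $U$ of $H$ is $\sigma$-stable and connected, so $U^{\sigma}$ is Zariski-dense in $U$, but $U^{\sigma}\subset G^{\sigma}$ consists of unipotent elements of a compact group and is therefore trivial, forcing $U=1$. The paper's route is shorter and avoids both the choice of $\rho$ and the Lie-algebra-to-group step (where you implicitly need that the representation of $\Lie(H)$ on $V$ is semisimple, so that a nilpotent central ideal vanishes); yours buys a self-contained application of Mostow's self-adjointness criterion and makes the Cartan decomposition of $H$ explicit, which is in any case what both proofs use for the component statement.
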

\begin{proof}
The algebraic group $H$ is clearly linear.
The unipotent radical $U$ of $H$ is stable under $\sigma$ and connected, and so $U^{\sigma}$ is Zariski-dense in $U$.
Any unipotent element of $G^{\sigma}$ is trivial, thus $U=\{1\}$ and $H$ is reductive.
Clearly $H^{\sigma}$ is compact, and we are left to show that $H^{\sigma}$ meets every connected component of $H$.
For $h \in H$ write $h = k \exp (X)$ where $k \in G^{\sigma}$ and $X \in \P$.
Then $\exp(2X) = \sigma(h)^{-1} h \in H$, and thus $\exp(2n X) \in H$ for all $n \in \Z$.
Since $H$ is Zariski-closed in $G$ this implies $\exp(t X) \in H$ for all $t \in \C$,
which implies $X \in \h^{-\sigma}$, $k\in H^\sigma$, and $H^\sigma$ meets every component of $H$.
This argument is classical.
\end{proof}

\begin{definition}
Suppose $\sigma$ is a real form of a complex reductive group $G$.  A \emph{Cartan involution} for
$\sigma$ is a holomorphic involutive automorphism $\theta$ of $G$,
commuting with $\sigma$, such that $\theta\sigma$ is a compact real form of $G$.
\end{definition}

By Lemma \ref{l:antiholconjlin} applied to $\sigma$ and $\theta \sigma$, any
Cartan involution is algebraic. In fact a simple variant of the proof of Lemma
\ref{l:antiholconjlin} shows directly that any holomorphic automorphism of a
complex reductive group is automatically algebraic.

\begin{theorem}
\label{t:Cartaninv}
Let $G$ be a complex reductive group, possibly disconnected.
\begin{enumerate}
\item Suppose $\sigma$ is a real form of $G$.
\begin{enumerate}
\item There exists a Cartan involution $\theta$ for $\sigma$, unique
  up to conjugation by an 
  inner automorphism from $(G^\sigma)^0$.
\item Suppose $(H,\theta_H)$ is a pair consisting of a $\sigma$-stable reductive 
  subgroup of $G$ and a Cartan involution $\theta_H$ for
  $\sigma|_H$. 
  Then there exists a Cartan involution $\theta$ for $G$ such that $\theta(H)=H$ and $\theta|_H = \theta_H$.
\end{enumerate}
\item Suppose $\theta$ is a holomorphic, involutive automorphism of $G$.
\begin{enumerate}
\item There is a real form $\sigma$ of $G$ such that $\theta$  is a
  Cartan involution for $\sigma$, unique up to conjugation by an inner
  automorphism   from   $(G^{\theta})^0$.
\item Suppose $(H,\sigma_H)$ is a pair consisting of a $\theta$-stable reductive
   subgroup of $G$ and a real form $\sigma_H$ such that
  $\theta|_H$ is a Cartan involution for $\sigma_H$. 
  Then there exists a real form $\sigma$ of $G$ such that $\sigma(H)=H$ and $\sigma|_H = \sigma_H$.
\end{enumerate}
\end{enumerate}
\end{theorem}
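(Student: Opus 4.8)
The plan is to reduce all four parts to one statement about compact real forms, and then to prove that statement using the geometry of the space of all compact real forms of $G$.

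\emph{Reduction.} I would begin with the dictionary relating the $\sigma$- and $\theta$-pictures. A holomorphic involution $\theta$ is a Cartan involution for a real form $\sigma$ if and only if $\sigma$ and $\theta$ commute and $\tau:=\sigma\theta=\theta\sigma$ is a compact real form; in that case $\sigma$, $\theta$, $\tau$ pairwise commute, each is the product of the other two, and $G^{\sigma}\cap G^{\theta}=G^{\sigma}\cap G^{\tau}$. Conversely, if $\rho$ is any involutive automorphism of $G$, holomorphic or antiholomorphic, and $\tau$ is a compact real form with $\rho\tau=\tau\rho$, then $\rho\tau$ is again an involutive automorphism, of the type opposite to $\rho$, and $(\rho,\rho\tau,\tau)$ is a triple of the above kind; all of this consists of routine manipulations of commuting (anti)holomorphic involutions. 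Thus $(1)(a)$ and $(2)(a)$ are the antiholomorphic and holomorphic instances of the single claim $(\star)$: every involutive automorphism $\rho$ of $G$ commutes with a compact real form $\tau$, and any two such $\tau$ are conjugate by $\int(g)$ for some $g\in(G^{\rho})^0$; while $(1)(b)$ and $(2)(b)$ are the two instances of its relative refinement, in which $H$ is a $\rho$-stable reductive subgroup and $\tau_H$ a compact real form of $H$ with $\rho|_H\tau_H=\tau_H\rho|_H$, and one asks for $\tau$ with $\tau|_H=\tau_H$ (which forces $\tau(H)=H$). Because $\int(g)$ commutes with $\rho$ for $g\in G^{\rho}$, conjugation of $\tau$ by such $g$ conjugates $\rho\tau$ by $g$ as well, so the $(G^{\rho})^0$-conjugacy in $(\star)$ carries back, through the dictionary, to the $(G^{\sigma})^0$- and $(G^{\theta})^0$-conjugacy claimed in the theorem.

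\emph{The space of compact real forms.} By the existence theorem for compact real forms together with Theorem \ref{t:conjcpctsubgps} and Lemma \ref{l:normcpct}, the set $\mathcal S$ of compact real forms of $G$ is nonempty and equals $G^0/\Norm_{G^0}(G^{\tau_0})$ for any $\tau_0\in\mathcal S$; by Lemma \ref{l:normcpct} and \eqref{e:center}, $\Norm_{G^0}(G^{\tau_0})=(G^{\tau_0})^0\,A^{G/G^0}$ is connected. With a $G$-invariant Riemannian metric (Killing form on the semisimple part, any invariant inner product on the central part), $\mathcal S$ is a complete, simply connected, nonpositively curved manifold: a product of a symmetric space of noncompact type with a Euclidean space. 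An involutive automorphism $\rho$ acts on $\mathcal S$ by $\tau\mapsto\rho\tau\rho^{-1}$; choosing the metric $\rho$-invariantly — the only freedom is on the central part — this action is isometric, and its fixed points are exactly the compact real forms commuting with $\rho$. The one point requiring care relative to the connected case is that $G/G^0$ must be tracked throughout in establishing these facts; this is precisely where we follow Mostow, via the preliminary results already set up, and it is \textbf{the main obstacle}.

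\emph{Existence and uniqueness in $(\star)$.} For existence, the $\langle\rho\rangle$-orbit $\{p,\rho(p)\}$ of any $p\in\mathcal S$ has a circumcenter — the midpoint of the geodesic joining $p$ to $\rho(p)$ — and this is fixed by $\rho$, so it is a compact real form commuting with $\rho$. (Equivalently one can appeal directly to Mostow's polar-decomposition argument in a faithful self-adjoint realization $G\hookrightarrow\GL_n(\C)$.) For uniqueness, the fixed set $\mathcal S^{\rho}$ is closed, convex, totally geodesic, and connected; the group $(G^{\rho})^0$ acts on it, and a tangent-space computation (using that the $\rho$-invariants functor is exact on the relevant real vector spaces) shows its orbits are open, hence, $\mathcal S^{\rho}$ being connected, there is a single orbit. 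This gives the $(G^{\rho})^0$-conjugacy.

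\emph{The relative statements.} For $(1)(b)$ and $(2)(b)$, first extend $\tau_H$ to a compact real form $\tau_1$ of $G$: conjugate the compact group $H^{\tau_H}$ into a fixed compact real form of $G$ by Theorem \ref{t:conjcpctsubgps}, and check — via Lemma \ref{l:cartandecomp} (compact real forms are maximal compact subgroups), Lemma \ref{l:subcpctrealform}, and algebraicity of $\tau_1$ for $\tau_1(H)=H$ — that $\tau_1|_H=\tau_H$. Hence the set $\mathcal S_H\subset\mathcal S$ of compact real forms of $G$ restricting to $\tau_H$ on $H$ is nonempty; it is the fixed locus of the compact group $H^{\tau_H}$ acting on $\mathcal S$ by conjugation (one checks, using that $A^{G/G^0}$ is central with no nontrivial compact subgroups, that normalizing $G^{\tau}$ forces containment in $G^{\tau}$), hence a closed, convex, complete nonpositively curved subspace. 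Since $\rho|_H$ commutes with $\tau_H$, the involution $\rho$ preserves $\mathcal S_H$, so the circumcenter argument inside $\mathcal S_H$ yields $\tau\in\mathcal S_H^{\rho}$ — a compact real form commuting with $\rho$ and restricting to $\tau_H$ on $H$ — and, $\mathcal S_H^{\rho}$ being closed and convex, the relevant centralizer-type subgroup of $(G^{\rho})^0$ acts transitively on it. Transporting these conclusions back through the dictionary of the first step proves the theorem.
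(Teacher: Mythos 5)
Your proposal is correct, and its first step coincides with the paper's: both reduce all four assertions to the single statement that an involutive automorphism $\rho$ of $G$, holomorphic or antiholomorphic, commutes with a compact real form, unique up to conjugation by $(G^{\rho})^0$, together with its relative refinement (this is the paper's Lemma \ref{l:tau} plus the uniqueness argument in the proof of Theorem \ref{t:Cartaninv}). Where you diverge is in how that statement is proved. The paper argues algebraically: starting from $\tau(K_1)=gK_1g^{-1}$ it uses Lemma \ref{l:normcpct} and the uniqueness of the Cartan decomposition to normalize $g$ so that $\tau(g)=g^{-1}$, then conjugates by $g^{1/2}$; the relative case is handled by tracking this $g$ and showing it centralizes $K_H$. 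You instead organize the same data into the Hadamard manifold $\mathcal S\simeq G^0/\bigl(K^0A^{G/G^0}\bigr)$ of compact real forms and invoke the Cartan fixed-point theorem (midpoint of the geodesic from $p$ to $\rho(p)$) for existence, an open-orbit-on-a-connected-convex-fixed-set argument for uniqueness, and convexity of the nonempty, $\rho$-stable locus $\mathcal S_H=\mathcal S^{H^{\tau_H}}$ for the relative statements. The geometric route is more conceptual and makes the relative case particularly clean (your identification of $\mathcal S_H$ with a fixed locus, using that $\Norm_G(G^{\tau})/G^{\tau}\simeq A^{G/G^0}$ is a vector group with no nontrivial compact subgroups, is a nice replacement for the paper's computation with $kg^{-1}=\tau(h^{-1})\cdot\tau(h^{-1})g^{-1}\tau(h)$). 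What it costs is exactly what you flag as the main obstacle: one must verify that $\mathcal S$ really is a complete simply connected nonpositively curved space on which $G$, $\rho$, and $H^{\tau_H}$ act by isometries (or at least by maps preserving the unique-geodesic structure) in the \emph{disconnected} case, including the $A^{G/G^0}$ quotient on the Euclidean factor and the choice of a $\bigl(G/G^0\bigr)\rtimes\langle\rho\rangle$-invariant inner product on the central part. Those verifications are precisely the content of the paper's Lemma \ref{l:normcpct}, \eqref{e:center}, and the normalizations inside the proof of Lemma \ref{l:tau}, so the two proofs contain essentially the same computations in different packaging; your version would need either to carry out that bookkeeping explicitly or to pass to a self-adjoint embedding $G\hookrightarrow\GL_n(\C)$ \`a la Mostow and work in the space of positive definite Hermitian matrices, as you indicate parenthetically.
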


For applications to the classification of real forms and to
homogeneous spaces, the fact that the statement of Theorem
\ref{t:Cartaninv} is symmetric in $\sigma$ and $\theta$ is crucial.

We will deduce  (1) and (2) from the next Lemma, whose proof is
adapted from \cite{mostow}*{Theorem 4.1}.  

\begin{lemma}
\label{l:tau}
Suppose $\tau$ is an involutive automorphism of $G$, either
holomorphic or anti-holomorphic.

\begin{enumerate}
\item  There exists a compact real form $\sigma^c$ of $G$ which commutes with $\tau$. 
\item Suppose $H$ is a $\tau$-stable reductive subgroup of $G$, $\sigma^c_H$ is a compact real form of $H$,
and $\tau$ commutes with $\sigma^c_H$. 
Then we can find $\sigma^c$ satisfying (1) so that $\sigma^c$ restricted to $H$ equals $\sigma^c_H$. 
\end{enumerate}
\end{lemma}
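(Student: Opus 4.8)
The plan is to mimic the classical averaging argument that produces a $\tau$-invariant compact form, working inside the group of automorphisms and using the uniqueness of compact forms up to $G^0$-conjugacy (Theorem \ref{t:conjcpctsubgps}). First I would fix an arbitrary compact real form $\sigma_0^c$ of $G$, which exists by the Weyl--Chevalley--Mostow theorem; in part (2) I would instead start by extending the given $\sigma^c_H$ on $H$ to a compact real form $\sigma_0^c$ of $G$ using Theorem \ref{t:Cartaninv}(1)(b) — wait, that is what we are proving; so in part (2) one should instead invoke the analogous extension statement for compact forms, which can be arranged by choosing a faithful representation $\rho:G\hookrightarrow\GL(V)$ and a hermitian form on $V$ making $\rho(H^{\sigma^c_H})$ (and then a maximal compact of $G$ containing it, by Theorem \ref{t:conjcpctsubgps}) unitary, exactly as in the proof that $\sigma\mapsto G^\sigma$ is surjective earlier in Section \ref{s:real}. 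Having fixed $\sigma_0^c$, form the automorphism $\alpha=\tau\circ\sigma_0^c\circ\tau\inv$. Since $\tau$ is involutive and either holomorphic or antiholomorphic, $\alpha$ is again a compact real form of $G$ (it is antiholomorphic, involutive, and $G^\alpha=\tau(G^{\sigma_0^c})$ is compact and meets every component). By Theorem \ref{t:conjcpctsubgps} there is $g\in G^0$ with $\alpha=\int(g)\circ\sigma_0^c\circ\int(g)\inv$.

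The core step is then to correct $\sigma_0^c$ by a carefully chosen element of $\exp(\P_0)$, where $\P_0=\Lie(G)^{-\sigma_0^c}$, so as to commute with $\tau$. The standard trick: the element $g$ above can be taken in a canonical form. Consider $s=g\tau(g)$ or rather the relevant positive-definite correction; more precisely, one shows $\int(g)\inv\tau$ normalizes the compact form $\sigma_0^c$ in a way that, after writing $g=k\exp(X)$ via the Cartan decomposition of $G$ with respect to $\sigma_0^c$ (Lemma \ref{l:cartandecomp}), forces $k\in\Norm_G(K_0)$ and reduces the problem to the positive part $\exp(X)$. Replacing $\sigma_0^c$ by $\int(\exp(X/2))\circ\sigma_0^c\circ\int(\exp(-X/2))$ — a conjugate, hence still a compact real form — one checks by a direct computation with $\tau$ (using that $\tau$ is $\pm$-holomorphic so that $d\tau$ acts $\R$-linearly on $\Lie(G)$ and either preserves or negates the Cartan decomposition appropriately) that the new form commutes with $\tau$. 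This is the uniqueness-of-square-roots-of-positive-elements argument of \cite{mostow}*{Theorem 4.1}, transplanted to the reductive, possibly disconnected setting; the disconnectedness is harmless because $g\in G^0$ and $K_0$ meets every component, so the relevant positive part $\exp(X)$ lives in $\exp(\P_0)$ with $X$ in the appropriate $\tau$-eigenspace.

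For part (2) the same correction must be performed $H$-equivariantly. The point is that $\sigma^c_H$ is already $\tau$-stable by hypothesis, so when we run the averaging argument the ambiguity element $g$, and hence $X$, can be chosen to centralize $H$: concretely, having arranged (as above) that $\sigma_0^c|_H=\sigma^c_H$, the automorphism $\int(g)\circ\sigma_0^c\circ\int(g)\inv$ agrees with $\tau\sigma_0^c\tau\inv$, and on $H$ both restrict to $\sigma^c_H$, so $g$ normalizes $H$ and acts on it through $\Norm_H(H^{\sigma^c_H})/H^{\sigma^c_H}$-type data that is trivial on the relevant positive part; thus $X\in\Lie(G)$ can be taken to commute with $\Lie(H)$ and with $H$, and the conjugation by $\exp(X/2)$ fixes $H$ pointwise. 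Hence the corrected $\sigma^c$ still restricts to $\sigma^c_H$ on $H$.

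The main obstacle I anticipate is the bookkeeping in the corrective step: showing that the ambiguity $g\in G^0$ relating the two compact forms can be replaced by $\exp(X)$ with $X$ in the correct $(\pm1)$-eigenspace of $d\tau$, and that conjugation by $\exp(X/2)$ then genuinely produces a $\tau$-commuting form rather than merely a $\tau$-conjugate one. This is where one must use the positivity/uniqueness of symmetric square roots (the substitute for the spectral-theoretic input in Mostow's proof) and be careful that $\tau$ being antiholomorphic rather than holomorphic flips a sign in the relevant eigenspace computation; both cases must be checked, but they are parallel. The extension refinements in (2) are then essentially formal once one observes the correcting element can be chosen to centralize $H$.
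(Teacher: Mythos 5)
Your overall strategy is the same as the paper's (both are adaptations of Mostow's Theorem 4.1): compare $\sigma^c_1$ with $\tau\circ\sigma^c_1\circ\tau\inv$ via Theorem \ref{t:conjcpctsubgps}, push the conjugating element $g$ into $P_1=\exp(\P_1)$, and conjugate by $g^{\frac12}$. For part (1) your outline matches the paper's proof, but the step you defer is exactly where the content lies: conjugating by $\exp(X/2)$ yields a $\tau$-commuting form only once one knows that $\tau(g)g$ is \emph{central}. The paper gets this from Lemma \ref{l:normcpct} ($\Norm_G(K_1)=K_1A^{G/G^0}$) plus uniqueness of the Cartan decomposition: $\tau(g)g\in\Norm_G(K_1)$ gives $\tau(g)g=ak$, and $g\in P_1$ forces $k=1$, so $\tau(g)g\in A^{G/G^0}\subset Z(G)$; only then does $\int(\tau(g^{\frac12})g)=\int(g^{\frac12})$ and hence $\tau\sigma^c\tau\inv=\sigma^c$. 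Your sketch gestures at ``$s=g\tau(g)$'' and a ``$\tau$-eigenspace'' but never pins this down; note also that the relevant subtlety is not disconnectedness per se but the central vector group $A^{G/G^0}$, which is present already for connected $G$ with a nontrivial central torus.

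The genuine gap is in part (2). From the fact that $\int(g)\circ\sigma^c_1\circ\int(g)\inv$ and $\tau\sigma^c_1\tau\inv$ agree on $H$ (both restrict to $\sigma^c_H$) you infer that ``$g$ normalizes $H$'' and then that $X$ may be taken to commute with $H$; neither inference is justified. The identity $g\,\sigma^c_1(g\inv hg)\,g\inv=\sigma^c_H(h)$ for $h\in H$ does not imply $gHg\inv=H$, and even normalizing $H$ would be far from centralizing it, which is what you need. The paper instead proves directly that $g$ commutes with every $h\in K_H$: for such $h$ one has $k:=g\inv\tau(h)g\in K_1$, and comparing the two Cartan decompositions $kg\inv=k\cdot g\inv=\tau(h)\cdot(\tau(h)\inv g\inv\tau(h))$ of the same element forces $k=\tau(h)$, i.e.\ $g$ commutes with $\tau(h)$ and hence with $K_H=\tau(K_H)$; one then checks that $\sigma^c$ fixes $K_H$ pointwise and that the holomorphic map $(\sigma^c)\inv\circ\sigma^c_H$ on $H$, being the identity on $K_H$, is the identity on $H$. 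Your proposal has no substitute for this computation, so part (2) as written does not go through.
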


\begin{proof}
Choose any compact real form $\sigma^c_1$ of $G$ and set $K_1=G^{\sigma^c_1}$, $\P_1=\Lie(G)^{-\sigma^c_1}$, and $P_1=\exp(\P_1)$.
Then $\tau(K_1)$ is another compact real form of $G$, so by Theorem \ref{t:conjcpctsubgps} there exists $g\in G^0$ so that 
\begin{subequations}
\renewcommand{\theequation}{\theparentequation)(\alph{equation}}  
\begin{equation}
\tau(K_1)=gK_1g\inv.
\end{equation}
Applying $\tau$ to both sides we see $\tau(g)g\in \Norm_G(K_1)$.
By Lemma \ref{l:normcpct} we can write
\begin{equation}
\label{e:taug}
\tau(g)g=ak\quad (a\in A^{G/G^0},k\in K_1).
\end{equation}

By (a) $g\inv\tau(K_1)g=K_1$, i.e.\ $\int(g^{-1})\circ \tau$ stabilizes
$K_1$.
Since this isomorphism is holomorphic or antiholomorphic and $\P_1 = i \Lie(K_1)$, this implies $g\inv\tau(P_1)g=P_1$. 
By the Cartan decomposition $G=K_1P_1$ we may assume $g\in P_1$, in which
case 
$g\inv \tau(g)g\in P_1$. Plugging in  (b) 
we conclude $g\inv ak\in P_1$, which by 
uniquess of the Cartan decomposition  implies $k=1$, so
\begin{equation}
\tau(g)g\in A^{G/G_1}.
\end{equation}
Set $a=\tau(g)g\in Z(G)$. 
Then $\tau(a)=g \tau(g) = g a g^{-1} = a$. 
After replacing $g$ with $ga^{-\frac12}$ we may assume $\tau(g)=g\inv$
(we are writing $\frac12$ for the square root in the vector group $P_1$).
We observe that $g\inv \tau(g^{\frac12}) g$ is an element of $P_1$ and its square equals
$g\inv \tau(g) g = g$, therefore $\tau(g^{\frac12}) = g^{\frac12}$.

Now let $\sigma^c=\int(g^{\frac12})\circ\sigma^c_1\circ\int(g^{-\frac12})$,
$K=G^{\sigma^c}=g^{\frac12}K_1g^{-\frac12}$,
and $\P=\Lie(G)^{-\sigma^c}$.
Then
$$
\begin{aligned}
\tau(K)&=\tau(g^{\frac12})\tau(K)\tau(g^{-\frac12})=g^{-\frac12} gKg\inv g^{\frac12}=K.
\end{aligned}
$$
This also implies $\tau(\P)=\P$, and $\tau$ commutes with $\sigma^c$, as one can check using the Cartan decomposition.

Now suppose we are given $(H,\sigma^c_H)$ as in (2), and set
$K_H=H^{\sigma^c_H}$.  In the first step of the preceding argument
choose $\sigma^c_1$ so that $K_H\subset K_1$ (then $K_H=K_1\cap H$ since $K_H$ is a
maximal compact subgroup of $H$).  Suppose $h\in K_H$.  Choosing $g\in P_1$ as above, recall
$(\int(g\inv)\circ\tau)(K_1)=K_1$, so 
let $k=g\inv\tau(h)g\in K_1$.
Since $\tau$
commutes with $\sigma^c_H$,
$\tau(K_H)=K_H\subset K_1$,
so $\tau(h)\in K_1$.
Write 
\begin{equation}
kg\inv=\tau(h\inv)\cdot\tau(h\inv)g\inv\tau(h).
\end{equation}
By uniqueness of the Cartan decomposition we conclude
$g\tau(h)=\tau(h)g$ for all $h\in K_H$.
Since $\tau$ is an automorphism of $K_H$ we see $gh=hg$ for all
$h \in K_H$.
Since $\int(K_H) \subset \int(K_1)$ acts on $P_1$, this implies that
$g^{\frac12}h = hg^{\frac12}$ for all $h \in K_H$ as well.
Define $\sigma^c,K$ and $P$ as before.
Then $K_H=K\cap H$ and $\sigma^c(h) = h$ for all $h \in K_H$.
Now $(\sigma^c)^{-1} \circ \sigma^c_H : H \rightarrow G$ is a holomorphic automorphism
which is the identity on $K_H$, thus it is the identity on $H$ (recall that $\Lie(H) = \Lie(K_H) \oplus i \Lie(K_H)$ and that $K_H$ meets every connected component of $H$).
\end{subequations}

\end{proof}

\begin{proof}[Proof of Theorem \ref{t:Cartaninv}]
For existence in (1)(a) apply the Lemma to $\tau=\sigma$ to construct 
a compact real form $\sigma^c$, commuting with $\sigma$, and set 
$\theta=\sigma\sigma^c$.
For (1)(b) apply Lemma \ref{l:tau}(2)
with $\tau=\sigma$, $\sigma^c_H=\sigma|_H\theta_H$ to 
construct $\sigma^c$, commuting with $\sigma$,  and let $\theta=\sigma\sigma^c$. 

We now prove the uniqueness statement in (1)(a).
Suppose $\theta,\theta_1$ commute with $\sigma$, and $\sigma^c=\sigma\theta$ 
and $\sigma_1^c=\sigma\theta_1$ are compact real forms. By 
Theorem \ref{t:conjcpctsubgps} there exists $g\in G^0$ so that 
$$
\sigma^c_1=\int(g)\circ\sigma^c\circ\int(g\inv)=\int(g\sigma^c(g\inv))\circ\sigma^c.
$$
Let $G=K\exp(\P)$ be the Cartan decomposition with respect to $\sigma^c$. 
Then 
we can take $g=\exp(X)$ for $X\in\P$, 
so $g\sigma^c(g\inv)=\exp(2X)$. 
Since $\sigma^c$ and $\sigma^c_1$ commute with $\sigma$, so does $\int(g\sigma^c(g\inv))=\int(\exp(2X))$,
so by \eqref{e:center}(b)
$$
\exp(2\sigma(X))\exp(-2X)\in Z(G)\cap G^0=(Z(K)\cap K^0)A^{G/G^0}.
$$
Applying the Cartan decomposition for $\sigma^c$ again we conclude
$$\exp(2\sigma(X))\exp(X)\in A^{G/G^0},$$
so $\sigma(X)-X\in A^{G/G^0}$. 
We are free to multiply $g$ by
an element of $Z(G) \cap G^0$, which contains $A^{G/G^0}$.  In
particular we can replace $X$ with  $X+(\sigma(X)-X)/2\in \P^{\sigma}$. 
Then $g\in \exp(\P^\sigma)\in (G^\sigma)^0$. 

The proof of (2) is similar. We apply Lemma \ref{l:tau} with $\tau=\theta$. 
For existence in (2)(a) apply part (1) of the Lemma to construct $\sigma^c$, commuting with $\theta$, and let $\sigma=\theta\sigma^c$. 
For (2)(b) apply part (2) of the Lemma with $\sigma^c_H=\sigma_H\theta|_H$ to construct $\sigma^c$, commuting with $\theta$, 
and let $\sigma=\theta\sigma^c$. 
We omit the proof of the conjugacy statement, which is similar to case (1)(a).
\end{proof}

Let $\Int(G)$ be the group of inner automorphisms of $G$, $\Aut(G)$ the (holomorphic) automorphisms, and set
$\Out(G)=\Aut(G)/\Int(G)$.
Let $\Int^0(G)$ be the subgroup of $\Int(G)$ consisting of automorphisms induced by elements of $G^0$, so that $\Int^0(G) \simeq G^0/(Z(G) \cap G^0)$.

\begin{corollary}
\label{c:bijection}
The correspondence between real forms and Cartan involutions induces a bijection between
\begin{subequations}
\renewcommand{\theequation}{\theparentequation)(\alph{equation}}  
\label{e:bijection}
\begin{equation}
\{\text{antiholomorphic involutive automorphisms of $G$}\}/\Int^0(G)
\end{equation}
and 
\begin{equation}
\{\text{holomorphic involutive automorphisms of $G$}\}/\Int^0(G).
\end{equation}
\end{subequations}
Both quotients are by the conjugation action of inner automorphisms coming from $G^0$. 
The same statement holds with $\Int^0(G)$ replaced by any group $\mathcal A$ satisfying 
$\Int^0(G)\subset \mathcal A\subset \Aut(G)$.
\end{corollary}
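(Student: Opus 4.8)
The plan is to promote Theorem~\ref{t:Cartaninv}(1)(a) and (2)(a) — the existence and $(G^\sigma)^0$-conjugacy statements — to a statement about orbit spaces, and then enlarge the group we quotient by. Write $\mathcal S$ for the set of antiholomorphic involutive automorphisms of $G$ and $\mathcal T$ for the set of holomorphic involutive automorphisms; both carry the conjugation action of $\Aut(G)$. I would first define the map $\Phi\colon\mathcal S\to\mathcal T/\Int^0(G)$ by sending $\sigma$ to the class of any Cartan involution $\theta$ for $\sigma$; this is well-defined on the nose by the uniqueness clause in Theorem~\ref{t:Cartaninv}(1)(a), since two Cartan involutions for $\sigma$ differ by conjugation by $(G^\sigma)^0\subset G^0$. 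Symmetrically, Theorem~\ref{t:Cartaninv}(2)(a) gives $\Psi\colon\mathcal T\to\mathcal S/\Int^0(G)$.

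Next I would check that $\Phi$ and $\Psi$ descend to mutually inverse bijections on the quotients \eqref{e:bijection}. Since $\Phi$ is $\Aut(G)$-equivariant (conjugating $\sigma$ by $\alpha\in\Aut(G)$ conjugates any Cartan involution $\theta$ by $\alpha$, because commuting, being holomorphic, and $\theta\sigma$ being compact are all preserved), it factors through $\mathcal S/\Int^0(G)$. For the inverse property: if $\theta$ is a Cartan involution for $\sigma$ then $\sigma$ is a real form for which $\theta$ is a Cartan involution — the defining condition ``$\theta$ holomorphic involutive, commutes with $\sigma$, $\theta\sigma$ compact'' is literally symmetric in $\sigma$ and $\theta$ — so $\Psi(\Phi(\sigma))$ contains the class of $\sigma$, hence equals it in $\mathcal S/\Int^0(G)$; and likewise $\Phi\circ\Psi=\mathrm{id}$. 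This already proves the $\Int^0(G)$ case.

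For the last sentence, let $\mathcal A$ satisfy $\Int^0(G)\subset\mathcal A\subset\Aut(G)$. Because $\Phi$ is equivariant for the full group $\Aut(G)$, it is in particular $\mathcal A$-equivariant, so it induces a map $\mathcal S/\mathcal A\to\mathcal T/\mathcal A$; the same construction gives the inverse map $\mathcal T/\mathcal A\to\mathcal S/\mathcal A$, and the two compositions are the identity because they already were before passing to the coarser quotient by $\mathcal A$. So the bijection \eqref{e:bijection} is simply the image of the $\Int^0(G)$-level bijection under the quotient $\mathcal A/\Int^0(G)$, and everything is formal once equivariance is in hand.

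\textbf{The main obstacle} is essentially bookkeeping rather than mathematics: one must be careful that the map $\Phi$ is genuinely well-defined \emph{before} quotienting the source — that is, that the ambiguity in choosing $\theta$ is exactly absorbed by $\Int^0(G)$ on the target, which is precisely the content of the uniqueness clause in Theorem~\ref{t:Cartaninv}(1)(a) (uniqueness up to $(G^\sigma)^0$, and $(G^\sigma)^0\subset G^0$ induces inner automorphisms in $\Int^0(G)$). The only genuinely substantive input is the symmetry of the Cartan-involution relation and Theorem~\ref{t:Cartaninv}, both already available, so the proof is short.
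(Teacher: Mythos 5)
Your proof is correct and is essentially the argument the paper intends: the corollary is stated as an immediate consequence of Theorem~\ref{t:Cartaninv}, using existence, uniqueness up to $(G^\sigma)^0$- (resp.\ $(G^\theta)^0$-) conjugacy, the symmetry of the relation ``$\theta\sigma$ is a compact real form,'' and $\Aut(G)$-equivariance to pass to any $\mathcal A$ between $\Int^0(G)$ and $\Aut(G)$. No gaps.
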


If $\Int^0(G)$ is replaced by $\Int(G)=\Gad$ then (a) 
is the set of equivalence classes of real forms of
$G$ (Definition \ref{d:realform}). 
We use this bijection to identify an equivalence class of
real forms with an equivalence class of Cartan involutions as in (b).

\sec{Borel-Serre's Theorem}
\label{s:borelserre}

In this section only $G$ denotes a \emph{real} Lie group. 
Since it requires no extra effort we work in 
the following generality.

\begin{definition}
We say a real Lie group  $G$ has a Cartan decomposition $(K,\P)$ if $K$ is a compact subgroup of $G$, $\P$ is a subspace of 
$\Lie(G)$ stable under $\Ad(K)$, and the map $(k,X)\mapsto k\exp(X)$ is a diffeomorphism from $K\times\P$ onto $G$. 
\end{definition}
It is easy to see that $K$ is necessarily a \emph{maximal} compact subgroup of $G$.

Recall we have a Cartan decomposition in the case that $G$ is the
group $H(\C)$ of complex points of a reductive group $H$ viewed as a
real group (Lemma \ref{l:cartandecomp}): for any compact real form
$\sigma^c$ of $H$, we have
$H = H^{\sigma^c} \exp(\Lie(H)^{-\sigma^c})$.  Although we will not
use this fact, it is easy to deduce that if $\sigma$ is a real form of
a complex reductive group $H$, then for any Cartan involution $\theta$
of $(H,\sigma)$, the Lie group $H(\R) = H^{\sigma}$ has a Cartan
decomposition $H(\R) = H(\R)^{\theta} \exp(\Lie(H(\R))^{-\theta})$.

More general real Lie groups $G$ admit a Cartan decomposition,
including many non-linear ones (for example the \emph{finite} covers
of $\SL_2(\R)$) or non-reductive ones (for example $G=H(\R)$ where $H$
is a real linear algebraic group).  On the other hand the universal
cover $\widetilde G$ of $\SL_2(\R)$ has a decomposition
$\widetilde G=L\exp(\P)$ where $L\simeq \R$ is the universal cover of
the circle, hence noncompact.  For a generalization of the Cartan
decomposition to any real Lie group having finitely many connected
components see \cite{hochschild}*{Ch.\ XV} or \cite{mostow}*{Theorem
  3.2}.

\begin{proposition}
\label{p:serregeneral}
Suppose $G$ is a real Lie group admitting a Cartan decomposition $(K,\P)$. 
Let $\tau$ be an involutive automorphism of $G$ which
preserves $K$ and $\P$. Let $Z_K = Z(G)\cap K$. 
The inclusion map $K\rightarrow G$ induces an isomorphism 
\begin{equation}
\label{e:serregeneral}
H^1(\tau,K;Z_K)\simeq H^1(\tau,G;Z_K)
\end{equation}
which respects the maps to $Z_K$.
\end{proposition}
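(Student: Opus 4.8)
The plan is to use the Cartan decomposition $G = K\exp(\P)$ directly at the level of cocycles, exploiting that $\tau$ preserves both factors. First I would show surjectivity of the induced map on cohomology. Given $g \in Z^1(\tau,G;Z_K)$, write $g = k\exp(X)$ with $k \in K$, $X \in \P$; the goal is to modify $g$ within its class by a coboundary $t\mapsto tg\tau(t\inv)$ so that the result lies in $Z^1(\tau,K;Z_K)$. The condition $g\tau(g) \in Z_K \subset K$ together with $\tau(\exp(X)) = \exp(d\tau(X))$ and uniqueness of the Cartan decomposition should force a relation on $X$; specifically, computing $g\tau(g) = k\exp(X)\tau(k)\exp(d\tau X) = k\tau(k)\exp(\Ad(\tau(k))\inv X)\exp(d\tau X)$ and using that this lies in $K$ gives $\Ad(\tau(k)\inv)X + d\tau(X) = 0$, i.e. $X$ is anti-invariant under the twisted involution $\Ad(\tau(k)\inv)\circ d\tau$ on $\P$. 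Then I would choose $t = \exp(Y)$ with $Y \in \P$ chosen so that conjugating kills the $\exp(X)$ part — i.e. solve $\exp(Y)\,k\exp(X)\,\tau(\exp(Y)\inv) \in K$, which after the same bookkeeping becomes a linear equation for $Y$ in terms of $X$ on $\P$, solvable because the relevant twisted involution has no obstruction (the "$+1$" and "$-1$" eigenspace decomposition of $\P$ under it lets one take $Y = \tfrac12 X$ up to the $\Ad(\tau(k)\inv)$ twist). This is essentially the standard averaging trick à la Borel--Serre, adapted to the twisted setting.

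Next I would prove injectivity: suppose $k_1, k_2 \in Z^1(\tau,K;Z_K)$ are cohomologous in $G$, say $k_2 = tk_1\tau(t\inv)$ with $t \in G$. Write $t = k\exp(X)$ and push the $\exp(X)$ across: $k_2 = k\exp(X)k_1\tau(k\inv)\tau(\exp(-X))$. Since $k_2, k, k_1, \tau(k\inv)$ all lie in $K$ and the exponential factors lie in $\exp(\P)$ (after moving the $\Ad$ of a compact element, which again preserves $\P$), uniqueness of the Cartan decomposition forces the $\P$-part to vanish, hence $t$ may be taken in $K$, giving cohomologous classes in $K$. One has to be a little careful that the $\exp(X)$ and $\tau(\exp(-X)) = \exp(-d\tau X)$ combine into a single factor in $\exp(\P)$ after absorbing the intervening compact conjugation, but $d\tau$ preserving $\P$ and $\Ad(K)$ preserving $\P$ make this routine. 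The compatibility with the maps to $Z_K$ is immediate since the isomorphism is induced by the inclusion $K \hookrightarrow G$ and the map $g \mapsto g\tau(g)$ is natural.

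The main obstacle I anticipate is the surjectivity step — specifically, solving for the correcting element $t \in \exp(\P)$ in the twisted setting. When $\tau$ does not commute with the conjugation by the compact part $k$ of $g$, the relevant operator on $\P$ is not $d\tau$ itself but the composite $\Ad(\tau(k)\inv)\circ d\tau$, which is still an involution of $\P$ (this needs a short check using $\tau^2 = 1$ and $g\tau(g) \in Z(G)$) but whose $\pm 1$-eigenspaces depend on $k$. The key point to verify is that one can still split $X$ as $X = X_+ + X_-$ along these eigenspaces and that halving the appropriate component produces a valid $Y \in \P$; this is where the hypothesis $A \subseteq Z(G)$-type conditions and the compactness of $K$ (so that $\exp$ on $\P$ behaves like a vector-group exponential) get used. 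Everything else — functoriality, the $Z_K$-compatibility, and injectivity — should be a direct consequence of uniqueness in the Cartan decomposition.
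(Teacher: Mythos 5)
Your proposal follows essentially the same route as the paper: write $g=k\exp(X)$, use uniqueness of the Cartan decomposition to force $k\tau(k)=z$ and the anti-invariance relation $\Ad(k)X=-d\tau(X)$, then correct by the exponential of half of $X$ (the paper phrases this as $p=q^2$ with $kq=\tau(q\inv)k$, so that $g=\tau(q\inv)kq$, which sidesteps your ``linear equation'' bookkeeping since all the exponentials involved are of multiples of the one element $X$ and hence commute); the injectivity argument via $t\inv=x\exp(X)$ and uniqueness is identical. No essential difference.
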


The proof is adapted from \cite{borel serre}*{Th\'eor\`eme 6.8} (see
also \cite{serre_galois}*{Section III.4.5}). This specializes to 
Borel-Serre's Theorem (see \eqref{e:serre}). 

\begin{proof}
It is enough to prove this when $Z_K$ is replaced by $\{z\} \subset Z_K$ 
where $z$ is any single element of $Z_K$. 
The left hand side of \eqref{e:serregeneral} is 
\begin{subequations}
\renewcommand{\theequation}{\theparentequation)(\alph{equation}}  
\begin{equation}
\{k\in K\mid k\tau(k)=z\}/[k \sim tg\tau(t\inv)\quad (t\in K)]
\end{equation}
and the right hand side is 
\begin{equation}
\{g\in G\mid g\tau(g)=z\}/[g \sim tg\tau(t\inv)\quad (t\in G)].
\end{equation}
\end{subequations}
Consider the map $\phi$ from (a) to (b) induced by inclusion.

We first show that $\phi$ is surjective. Suppose $g\in G$
satisfies $g\tau(g)=z$. Let $P=\exp(\P)$, and write $g=kp$ with $k\in K,p\in P$.
Then $kp\tau(kp)=z$, which can be written
$$
k\tau(k)\cdot \tau(k\inv)p\tau(k)=z\cdot\tau(p\inv).
$$
By uniqueness of the Cartan decomposition we conclude $k\tau(k)=z$
and $\tau(k\inv)p\tau(k)=\tau(p\inv)$. 
The latter condition is equivalent to $kpk\inv=\tau(p\inv)$.
The set of $p \in P$ satisfying this condition is the exponential of the 
subspace $\{Y\in \P \mid \mathrm{Ad}(k)Y=-\tau(Y)\}$. Therefore 
$p=q^2$ for some $q\in P$ satisfying $kq=\tau(q\inv)k$. 
Then $g=kq^2=(kq)q=\tau(q\inv)kq$. Therefore $\phi$ takes $\cl(k)$ in (a) to $\cl(g)$ in (b).

We now show that $\phi$ is injective. 
Suppose $k,k'\in K$, $k\tau(k)=k'\tau(k')=z$, and $k'=tk\tau(t\inv)$
for some $t\in G$. Write $t\inv=xp$ with $x\in K,p\in P$.
Then $k'=p\inv x\inv k\tau(x)\tau(p)$, i.e.
$$
k'\cdot (k')\inv pk'=x\inv k\tau(x)\cdot \tau(p)
$$
By uniqueness of the Cartan decomposition we conclude $k'=x\inv
k\tau(x)$ with $x\in K$, i.e. $k$ and $k'$ are equivalent in (a).
\end{proof}

\begin{corollary}
\label{c:taumu}
Suppose $G$ is a real Lie group admitting a Cartan decomposition $(K,\P)$, and as before let $Z_K=Z(G) \cap K$.
Let $\tau, \mu$ be involutive automorphisms of $G$ which
preserve $K$ and $\P$, and assume that $\tau|_K=\mu|_K$. Then there are canonical 
isomorphisms
$$H^1(\tau,G;Z_K)\simeq H^1(\tau|_K, K, Z_K) \simeq H^1(\mu,G;Z_K)$$
compatible with the maps to $Z_K$. 
In particular there is a canonical isomorphism of pointed sets 
\begin{equation}
\label{e:taumu}
H^1(\tau,G)\simeq H^1(\mu,G).
\end{equation}
\end{corollary}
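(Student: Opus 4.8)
The plan is to read the statement off from Proposition \ref{p:serregeneral}, which contains all the substance; what remains is bookkeeping with base points.

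First I would apply Proposition \ref{p:serregeneral} to the automorphism $\tau$. Since $\tau$ preserves $K$ and $\P$ by hypothesis, the proposition gives an isomorphism $H^1(\tau|_K,K;Z_K)\simeq H^1(\tau,G;Z_K)$, induced by the inclusion $K\hookrightarrow G$ and respecting the maps to $Z_K$. Applying the same proposition to $\mu$ (which also preserves $K$ and $\P$) gives $H^1(\mu|_K,K;Z_K)\simeq H^1(\mu,G;Z_K)$, again respecting the maps to $Z_K$. Now the hypothesis $\tau|_K=\mu|_K$ says that the two ``middle'' terms $H^1(\tau|_K,K;Z_K)$ and $H^1(\mu|_K,K;Z_K)$ are literally the same pointed set over $Z_K$. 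Composing the first isomorphism with the inverse of the second yields the desired isomorphism $H^1(\tau,G;Z_K)\simeq H^1(\mu,G;Z_K)$, compatible with the maps to $Z_K$; it is canonical because both component maps are induced by the canonical inclusion $K\hookrightarrow G$.

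For the last assertion I would observe that $1\in Z(G)\cap K=Z_K$, so the inclusion of the single element $\{1\}$ into $Z_K$ induces a natural map $H^1(\tau,G)\to H^1(\tau,G;Z_K)$. This map is injective: if $g,g'\in Z^1(\tau,G)$ become equal in $H^1(\tau,G;Z_K)$ then $g'=tg\tau(t\inv)$ for some $t\in G$, which is already the coboundary relation defining $H^1(\tau,G)$. Its image is exactly the fiber over $1\in Z_K$ of the map $H^1(\tau,G;Z_K)\to Z_K$, since the value $g\tau(g)$ is constant on coboundary classes: if $g'=tg\tau(t\inv)$ then $g'\tau(g')=tg\tau(g)t\inv$, which equals $1$ whenever $g\tau(g)=1$. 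Thus $H^1(\tau,G)$ is identified with that fiber, and likewise $H^1(\mu,G)$ with the fiber over $1$ on the $\mu$ side. Because the isomorphism constructed above respects the maps to $Z_K$, it carries one fiber over $1$ bijectively to the other, which gives the canonical isomorphism of pointed sets $H^1(\tau,G)\simeq H^1(\mu,G)$.

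I do not expect any real obstacle here beyond Proposition \ref{p:serregeneral} itself: the corollary is a formal consequence once one is careful on two small points, namely that the isomorphisms of the proposition commute with the maps to $Z_K$ (so that the fibers over the base point $1$ correspond), and that passing to that fiber genuinely recovers ordinary cohomology $H^1(\tau,G)$ rather than some quotient of it. Both are immediate from the definitions in \eqref{e:H1Z}.
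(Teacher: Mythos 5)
Your argument is correct and is precisely the intended deduction: the paper states this corollary without proof because it is exactly the formal consequence of Proposition \ref{p:serregeneral} that you describe (apply the proposition to $\tau$ and to $\mu$, identify the two middle terms via $\tau|_K=\mu|_K$, and recover ordinary cohomology as the fiber over $1\in Z_K$). Your two "small points" — compatibility with the maps to $Z_K$ and the identification of $H^1(\tau,G)$ with that fiber — are indeed the only things to check, and you check them correctly.
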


Now let $G$ be a complex reductive group, viewed as a real group.
Recall (Section \ref{s:real}) $G$ has a compact real form $\sigma^c$, and a Cartan decomposition 
$G=K\exp(\P)$.
Hence Proposition \ref{p:serregeneral} applies. 
Taking $\tau=\sigma^c$ and restricting to the fibres of $\{1\} \subset Z_K$ gives Borel-Serre's Theorem
\cite{borel serre}*{Th\'eor\`eme 6.8}, \cite{serre_galois}*{Section III.4.5}
\begin{equation}
\label{e:serre}
H^1(\sigma^c,K)\simeq H^1(\sigma^c,G).
\end{equation}
This admits the following natural generalization to arbitrary real forms.

\begin{corollary}
\label{c:main}
Suppose $G$ is  a complex, reductive algebraic group $G$, $\sigma$ is a real form of $G$, and $\theta$ is a Cartan involution for $\sigma$. 
Let $\sigma^c=\sigma\theta$.

There are canonical isomorphisms 
$$ H^1(\theta,G;Z^{\sigma^c}) \simeq H^1(\theta,G^{\sigma^c};Z^{\sigma^c}) \simeq H^1(\sigma,G;Z^{\sigma^c}). $$
In particular there is a canonical isomorphism of pointed sets:
$$ H^1(\theta,G)\simeq H^1(\sigma,G). $$
\end{corollary}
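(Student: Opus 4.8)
The plan is to obtain this as an immediate application of Corollary~\ref{c:taumu} to the complex reductive group $G$ regarded as a real Lie group. First I would recall that, by Lemma~\ref{l:cartandecomp}, the pair $(K,\P)$ with $K=G^{\sigma^c}$ and $\P=\Lie(G)^{-\sigma^c}$ is a Cartan decomposition of $G$ as a real Lie group, so that Proposition~\ref{p:serregeneral} and Corollary~\ref{c:taumu} apply. I would also note that $Z(G)\cap K=Z(G)^{\sigma^c}=Z^{\sigma^c}$, so the subgroup written $Z_K$ in that corollary is precisely the $Z^{\sigma^c}$ occurring in the present statement.

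Next I would invoke Corollary~\ref{c:taumu} with $\tau=\theta$ and $\mu=\sigma$, checking its three hypotheses. Both $\theta$ and $\sigma$ are involutive automorphisms of $G$ viewed as a real Lie group (the first holomorphic, the second antiholomorphic). Since $\theta$ and $\sigma$ commute we have $\sigma^c=\sigma\theta=\theta\sigma$, so each of $\sigma$ and $\theta$ commutes with $\sigma^c$; hence for $k\in K$ one has $\sigma^c(\sigma(k))=\sigma(\sigma^c(k))=\sigma(k)$, so $\sigma$ stabilizes $K=G^{\sigma^c}$, and likewise $\theta$, while on Lie algebras $d\sigma$ and $d\theta$ commute with $d\sigma^c$ and so stabilize $\P=\Lie(G)^{-\sigma^c}$. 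Finally, on $K$ the automorphism $\sigma^c=\sigma\theta$ restricts to the identity, so $\theta|_K=\sigma\inv|_K=\sigma|_K$, the last equality because $\sigma$ is an involution. Corollary~\ref{c:taumu} then delivers the canonical isomorphisms
$$ H^1(\theta,G;Z^{\sigma^c})\simeq H^1(\theta|_K,K;Z^{\sigma^c})\simeq H^1(\sigma,G;Z^{\sigma^c}), $$
compatibly with the maps to $Z^{\sigma^c}$, and the isomorphism $H^1(\theta,G)\simeq H^1(\sigma,G)$ follows either from the final clause of Corollary~\ref{c:taumu} or by restricting the displayed isomorphism to the fibres over $1\in Z^{\sigma^c}$.

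There is no real obstacle left at this stage: the substance of the argument --- the Borel--Serre-style reduction to the compact group via uniqueness of the Cartan decomposition --- has already been carried out in Proposition~\ref{p:serregeneral} and packaged in Corollary~\ref{c:taumu}. The only items needing a moment's attention are bookkeeping: the identification $Z(G)\cap K=Z^{\sigma^c}$, and the observation that although $\sigma$ and $\theta$ have opposite holomorphy types they are both genuine automorphisms of the underlying real Lie group which stabilize the Cartan datum $(K,\P)$ and agree on $K$ --- all immediate consequences of the single relation $\sigma^c=\sigma\theta=\theta\sigma$.
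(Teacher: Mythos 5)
Your proposal is correct and follows exactly the paper's own route: the paper likewise deduces the corollary from Corollary \ref{c:taumu} applied to the Cartan decomposition $G=K\exp(\P)$ induced by $\sigma^c$, using the fact that $\sigma$ and $\theta$ agree on $K=G^{\sigma^c}$. Your verification of the hypotheses (stability of $K$ and $\P$ under $\sigma$ and $\theta$, the identification $Z(G)\cap K=Z^{\sigma^c}$, and $\theta|_K=\sigma|_K$) just makes explicit what the paper leaves to the reader.
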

This follows from Corollary \ref{c:taumu} for the Cartan decomposition of $G$ induced by $\sigma^c$, using the fact that
$\sigma$ and $\theta$ agree on $K=G^{\sigma^c}$. 

\begin{exampleplain}
\label{ex:serre1}
If  $\theta$ is the identity, $H^1(\theta,G)$
is the set of conjugacy classes of involutions in $G$. 
If $G$ is connected this is 
in bijection with $H_2/W$, where $H$ is a Cartan subgroup,  $H_2$ is the group of involutions in $H$
and $W$ is the Weyl group (see Example \ref{ex:compact}).

On the other hand $H^1(\theta,G(\R))$
is the set of conjugacy classes of involutions in $G(\R)$,
i.e. $H(\R)_2/W$. Since $H(\R)$ is compact this is equal to $H_2/W$. So we recover 
\cite{serre_galois}*{Theorem 6.1}: $H^1(\sigma,G)\simeq H^1(\theta,G(\R))=H(\R)_2/W$.
\end{exampleplain}

\begin{exampleplain}
\label{ex:sl2}
Suppose $G=PSL(2,\C)$. This has two real forms, $PGL(2,\R)\simeq SO(2,1)$ and $SO(3)$.
Since $G$ is adjoint $|H^1(\sigma,G)|=2$ for either real form.

Now let $G=SL(2,\C)$. From  Example \ref{ex:serre1} if $G(\R)=SU(2)$ then $|H^1(\sigma,G)|=2$. 
On the other hand if $G(\R)=SL(2,\R)$ then 
it is well known that $H^1(\sigma,G)=1$.
Thus
 in contrast to the adjoint case, although $SL(2,\R)$ and $SU(2)$ are inner forms of each other, 
their cohomology is different. See Lemma \ref{l:clarify}.
\end{exampleplain}

\sec{Rational Orbits}
\label{s:rationalorbits}

We use the results of the previous section to study rational orbits of $G$-actions 
for real reductive groups.

Write 
\begin{subequations}
\renewcommand{\theequation}{\theparentequation)(\alph{equation}}  
\label{e:quad}
\begin{equation}
(G,\tau_G,X,\tau_X)
\end{equation}
to indicate the following situation, which occurs repeatedly. First of
all $G$ is an abstract group 
equipped with an involutive automorphism $\tau_G$, and $X$ is a set
equipped with an involutive automorphism $\tau_X$. 
Furthermore 
there is a  left action of $g:x\mapsto g\cdot x$ of $G$ on $X$.
We assume $(\tau_G,\tau_X)$ are 
{\em compatible}:
\begin{equation}
\label{e:compatible}
\tau_X(g\cdot X)=\tau_G(g)\cdot \tau_X(x)\quad (g\in G,x\in X).
\end{equation}
We will apply this with $G$ a complex group, $X$ a complex variety, 
and $\tau_G$ and  $\tau_X$ each  acting holomorphically or anti-holomorphically. 
\end{subequations}

When $X$ is a homogeneous space the following description of the set of
orbits for the action of $G^{\tau_G}$ on $X^{\tau_X}$ is well known.

\begin{lemma}
\label{l:orbitsinv}
In the setting of \eqref{e:quad} suppose $X$ is a homogenous space for $G$.
Assume that $X^{\tau_X} \neq \emptyset$,
 choose $x \in X^{\tau_X}$ and denote by $G^x$ the stabilizer of $x$.
Then we have a bijection
\begin{align*}
X^{\tau_X} / G^{\tau_G} & \rightarrow \ker \left( H^1(\tau_G, G^x) \rightarrow H^1(\tau_G, G) \right) \\
 g \cdot x & \mapsto \cl(g^{-1} \tau_G(g))
\end{align*}
\end{lemma}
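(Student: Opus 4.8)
\textbf{Proof proposal for Lemma \ref{l:orbitsinv}.}

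The plan is to verify that the stated map is well-defined, then construct an explicit inverse, since both directions are essentially bookkeeping once one unwinds the definitions. First I would check that the assignment $g\cdot x\mapsto \cl(g^{-1}\tau_G(g))$ lands where claimed. Given $y=g\cdot x\in X^{\tau_X}$, compatibility \eqref{e:compatible} gives $y=\tau_X(y)=\tau_X(g\cdot x)=\tau_G(g)\cdot\tau_X(x)=\tau_G(g)\cdot x$, so $g^{-1}\tau_G(g)$ stabilizes $x$, i.e.\ lies in $G^x$. Writing $n=g^{-1}\tau_G(g)$, one computes $n\,\tau_G(n)=g^{-1}\tau_G(g)\tau_G(g^{-1})\tau_G^2(g)=g^{-1}\tau_G^2(g)=1$, so $n\in Z^1(\tau_G,G^x)$ and $\cl(n)\in H^1(\tau_G,G^x)$ makes sense. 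Its image in $H^1(\tau_G,G)$ is $\cl(g^{-1}\tau_G(g))$, which is the distinguished class (a coboundary, via $t=g^{-1}$), so $\cl(n)$ lies in the indicated kernel. Next I would check independence of the choice of $g$: if $g\cdot x=g'\cdot x$ then $g'=gh$ with $h\in G^x$, and $(g')^{-1}\tau_G(g')=h^{-1}\big(g^{-1}\tau_G(g)\big)\tau_G(h)$, which is the twist of $n$ by $h\in G^x$, hence defines the same class in $H^1(\tau_G,G^x)$.

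For the inverse, given a class in $\ker(H^1(\tau_G,G^x)\to H^1(\tau_G,G))$, represent it by some $n\in G^x$ with $n\,\tau_G(n)=1$; the kernel condition means $n=g^{-1}\tau_G(g)$ for some $g\in G$ (a priori only $n=t\,\tau_G(t^{-1})$ for some $t\in G$, so take $g=t^{-1}$). I would then send this class to the orbit $G^{\tau_G}\cdot(g\cdot x)$. One must check $g\cdot x\in X^{\tau_X}$: indeed $\tau_X(g\cdot x)=\tau_G(g)\cdot x=g\,n\cdot x=g\cdot x$ since $n\in G^x$. Then I would check this is independent of the representative $n$ and of the choice of $g$: changing $n$ to $h^{-1}n\,\tau_G(h)$ with $h\in G^x$ replaces $g$ by $gh$ (up to the ambiguity in solving the coboundary equation), which changes $g\cdot x$ by $gh\cdot x=g\cdot x$; and if $g^{-1}\tau_G(g)=(g')^{-1}\tau_G(g')$ then $g'g^{-1}\in G^{\tau_G}$, so $g'\cdot x$ and $g\cdot x$ are in the same $G^{\tau_G}$-orbit. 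Finally I would confirm the two constructions are mutually inverse, which is immediate from the formulas: starting from $g\cdot x$, forming $n=g^{-1}\tau_G(g)$, and reconstructing $g\cdot x$ returns the same orbit; and the other composite is equally transparent.

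The only genuine subtlety — the step I expect to require the most care — is the ambiguity in ``solving'' $n=g^{-1}\tau_G(g)$: the element $g$ witnessing that $\cl(n)$ dies in $H^1(\tau_G,G)$ is determined only up to left multiplication by $G^{\tau_G}$, so one must be consistent about whether one is tracking cocycles or classes at each stage. Organizing the argument as ``two maps, check well-definedness of each, check they compose to the identity both ways'' keeps this under control, and no deeper input (not even the Cartan decomposition) is needed here — this lemma is purely formal, in contrast to Proposition \ref{p:serregeneral}. I would also note in passing that the basepoint $x\in X^{\tau_X}$ is used only to trivialize the torsor; a different choice of basepoint yields a canonically isomorphic description.
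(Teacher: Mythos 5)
Your proof is correct. The paper offers no argument for this lemma (it is stated as ``well known''), and what you give is exactly the standard torsor/twisting argument (cf.\ \cite{serre_galois}*{I.5.4, Corollary 1}): well-definedness of both maps plus the mutual-inverse check. The only verification you leave implicit is that the forward map is constant on $G^{\tau_G}$-orbits --- for $k\in G^{\tau_G}$ one has $(kg)^{-1}\tau_G(kg)=g^{-1}k^{-1}k\,\tau_G(g)=g^{-1}\tau_G(g)$ --- which is just the one-line converse of the injectivity computation you do carry out, so nothing is missing in substance.
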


If $\sigma_G$ is a compact real form of $G$ then $X^{\sigma_X}$ is a
homogeneous space for $G^{\sigma_G}$:

\begin{lemma}
\label{l:orbitscpct}
In the setting of \eqref{e:quad}, suppose 
$G$ is a complex reductive algebraic group, $X$ is a homogeneous space for $G$, 
and $\sigma_G$ is a compact real form of $G$.  Let $K=G^{\sigma_G}$. 

\begin{enumerate}
\item $K$ acts transitively on $X^{\sigma_X}$.
\item Suppose $H$ is a $\sigma_G$-stable subgroup of $G$, and $H=G^x$ for some $x\in X$. 
Assume $X^{\sigma_X}\ne\emptyset$. Then $H=G^y$ for some $y\in X^{\sigma_X}$.
\end{enumerate}
\end{lemma}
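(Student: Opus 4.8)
The plan is to deduce both parts from the Cartan decomposition together with Proposition \ref{p:serregeneral} (in the form of Borel--Serre's theorem \eqref{e:serre}), applied not only to $G$ but also to the stabilizer $H = G^x$. First I would observe that, since $G$ is reductive, a point stabilizer $H = G^x$ for the action on the homogeneous space $X$ is an algebraic subgroup (the orbit map realizes $X \simeq G/H$), though possibly disconnected; this is the reason the earlier development was careful not to assume connectedness. The key preliminary reduction is to replace $H$ by a conjugate so that it is $\sigma_G$-stable: by Lemma \ref{l:tau}(1) applied to $\tau = $ (a suitable involution) — or more directly, since all maximal compact subgroups are conjugate and the argument of Lemma \ref{l:subcpctrealform} applies — one can arrange that $\sigma_G$ preserves $H$ and restricts to a compact real form of $H$. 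Actually for part (1) one does not even need this; one argues directly.

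\textbf{Part (1).} To show $K = G^{\sigma_G}$ acts transitively on $X^{\sigma_X}$: fix $x_0 \in X^{\sigma_X}$ (nonempty by hypothesis), so that $\sigma_X$ and $\sigma_G$ are compatible and $H := G^{x_0}$ is $\sigma_G$-stable. By Lemma \ref{l:orbitsinv}, the $K$-orbits on $X^{\sigma_X}$ are in bijection with $\ker\!\left(H^1(\sigma_G, H) \to H^1(\sigma_G, G)\right)$. It therefore suffices to show this kernel is trivial, i.e.\ that $H^1(\sigma_G,H) \to H^1(\sigma_G,G)$ is injective. Here I would invoke Borel--Serre \eqref{e:serre} twice: since $\sigma_G$ is a compact real form of $G$ and (by Lemma \ref{l:subcpctrealform}) $\sigma_G|_H$ is a compact real form of the $\sigma_G$-stable reductive group $H$, we have $H^1(\sigma_G, H) \simeq H^1(\sigma_G, H^{\sigma_G})$ and $H^1(\sigma_G, G) \simeq H^1(\sigma_G, K)$, compatibly with the map induced by inclusion $H^{\sigma_G} \hookrightarrow K$. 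So it is enough to show $H^1(\sigma_G, L) \to H^1(\sigma_G, K)$ is injective where $L = H^{\sigma_G}$; but $\sigma_G$ acts trivially on the \emph{compact} groups $K$ and $L$, so $H^1(\sigma_G, K)$ and $H^1(\sigma_G,L)$ are just conjugacy classes of involutions in $K$, $L$ respectively. Injectivity then follows from Theorem \ref{t:conjcpctsubgps} (or rather its analogue for involutions): two involutions of $L$ that are $K$-conjugate are already $L$-conjugate, because each generates a compact subgroup and the centralizer arguments of Mostow apply. This last point is the main obstacle — one must check that $K$-conjugacy of involutions in a maximal compact $L$ descends to $L$-conjugacy — but it is exactly the content of the uniqueness of compact forms / maximal compact subgroups applied inside $\Cent_K$ of the relevant torus.

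\textbf{Part (2).} Given the $\sigma_G$-stable subgroup $H = G^x$ with $X^{\sigma_X} \neq \emptyset$, I want a fixed point $y \in X^{\sigma_X}$ with $G^y = H$. Write $y = g \cdot x$; then $G^y = gHg^{-1}$, so I need $g \in \Norm_G(H)$ with $g \cdot x \in X^{\sigma_X}$, i.e.\ $\sigma_X(g\cdot x) = g \cdot x$, which by compatibility \eqref{e:compatible} reads $\sigma_G(g) \cdot \sigma_X(x) = g \cdot x$. Since $H$ is $\sigma_G$-stable, $\sigma_X(x)$ has stabilizer $H$ as well, hence $\sigma_X(x) = n\cdot x$ for some $n \in \Norm_G(H)$, and the condition becomes $\sigma_G(g) n = g$ modulo $H$, i.e.\ $g^{-1}\sigma_G(g) \equiv n^{-1} \pmod H$ in $\Norm_G(H)/H$. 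The class of $n^{-1}$ (equivalently of $\sigma_X(x)$) defines an element of $H^1(\sigma_G, \Norm_G(H)/H)$ — one checks it is a cocycle using $\sigma_X^2 = \mathrm{id}$ — and solving for $g$ amounts to killing that class in $\Norm_G(H)$, i.e.\ lifting through $\Norm_G(H) \to \Norm_G(H)/H$. By part (1) already proved (applied to the homogeneous space $H\backslash G$ with its induced real structure, or directly), $\sigma_X(x)$ and $x$ lie in the same $K$-orbit, so in fact we may take $n \in K$; then $g$ can be sought in $K$ as well, and the obstruction lives in the cohomology of compact groups where it vanishes by part (1)'s injectivity statement applied to the pair $(H \cap K) \subset \Norm_K(H)$. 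The main subtlety here, as in part (1), is organizing the reduction to compact groups so that Borel--Serre applies; once that is in place both parts are formal.
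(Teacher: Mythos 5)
In part (1) your skeleton is the paper's: use Lemma \ref{l:orbitsinv} to identify $X^{\sigma_X}/K$ with $\ker\bigl(H^1(\sigma_G,G^x)\to H^1(\sigma_G,G)\bigr)$, then use Lemma \ref{l:subcpctrealform} and Proposition \ref{p:serregeneral} to replace both groups by their compact fixed-point subgroups $L=(G^x)^{\sigma_G}\subset K$. But the last step goes wrong twice. First, triviality of the kernel of a map of pointed sets is not the same as injectivity, and only the former is needed. Second, the injectivity you try to prove rests on a false claim: two involutions of $L$ that are conjugate in $K$ need not be conjugate in $L$ (take $K=U(2)$ and $L$ its diagonal torus; $\diag(1,-1)$ and $\diag(-1,1)$ are $K$-conjugate but $L$ is abelian). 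Fortunately none of this is needed: $\sigma_G$ acts \emph{trivially} on $K$ and on $L$, so the only coboundary $t\sigma_G(t\inv)=tt\inv$ is $1$, and a class $\cl(g)$ with $g\in L$, $g^2=1$, maps to the base point of $H^1(\sigma_G,K)$ only if $g=1$. The kernel is trivial for this one-line reason, which is exactly how the paper concludes.

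Part (2) has a genuine gap. Your pivotal claim that $x$ and $\sigma_X(x)$ lie in the same $K$-orbit does not follow from part (1), which concerns only points of $X^{\sigma_X}$, and it is false in general: take $G=X=\C^\times$ with $G$ acting by multiplication, $\sigma_G(z)=\sigma_X(z)=\bar z\inv$ and $x=2$; then $\sigma_X(x)=1/2$ is not on the $K$-orbit $\{|z|=2\}$ of $x$. So you cannot take $n\in K$, and the asserted vanishing of $\cl(\bar n\inv)\in H^1(\sigma_G,\Norm_G(H)/H)$ is unsupported -- note that $H^1$ of a compact group with trivial action is the set of conjugacy classes of involutions, which is generally nontrivial, so landing ``in the cohomology of compact groups'' does not kill the class. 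The tell is that your argument never actually uses $X^{\sigma_X}\ne\emptyset$, which is essential: for $G=\C^\times$ acting on $X=\C^\times$ by $g\cdot x=g^2x$, with $\sigma_G(z)=\bar z\inv$ and $\sigma_X(z)=-\bar z\inv$, every stabilizer is the $\sigma_G$-stable group $\{\pm1\}$, yet $X^{\sigma_X}=\emptyset$ and your obstruction class is the nontrivial element of $H^1(\sigma_G,\C^\times)\simeq\{\pm1\}$. The paper's route avoids this: starting from a point $x_0\in X^{\sigma_X}$ (here the hypothesis enters), it regards the set of stabilizers as the homogeneous space $G/\Norm_G(G^{x_0})$ with its induced real structure and applies part (1) \emph{to that space}; transitivity of $K$ on its $\sigma$-fixed points yields $g\in K$ with $H=gG^{x_0}g\inv$, and $y=g\cdot x_0$ works. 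Any repair of your cocycle computation must similarly feed in a $\sigma_X$-fixed point to show the class of $\bar n\inv$ is trivial, at which point you are reconstructing that argument.
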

\begin{proof}

For (1), if $X^{\sigma_X}$ is empty there is nothing to prove, so choose $x\in X^{\sigma_X}$. 
By the previous lemma we have to show that 
\begin{equation}
\tag{a}\ker \left( H^1(\sigma_G,G^x) \rightarrow H^1(\sigma_G, G) \right)
\end{equation}
is trivial.
By Lemma \ref{l:subcpctrealform}  $\sigma_G$ restricts to a compact real form of $G^x$, so 
Proposition \ref{p:serregeneral} implies (a) is isomorphic to 
\begin{equation}
\tag{b}\ker \left( H^1(\sigma_G, (G^x)^{\sigma_G}) \rightarrow H^1(\sigma_G, G^{\sigma_G}) \right)
\end{equation}
which is clearly trivial, proving (1).

For (2) choose $x\in X^{\sigma_X}$. The set of  subgroups $H$ in (2) is identified with the set of $\sigma_G$-fixed elements of the homogeneous space $G/\Norm_G(G^x)$.
By (1) $G^{\sigma_G}$ acts transitively on this set.
Thus for any such $H$  there exists $g \in G^{\sigma_G}$ such that $H = g G^x g^{-1}$. Then 
$g\cdot x\in X^{\sigma_X}$ and $H=G^{g \cdot x}$.
\end{proof}

We next consider homogeneous spaces for noncompact groups.

\begin{proposition}
\label{p:homogeneous}
Suppose $G$ is a complex, reductive algebraic group, possibly disconnected,
acting transitively on a 
complex algebraic variety $X$.
Suppose we are given:
\begin{enumerate}
\begin{item}
a pair $(\sigma_G,\theta_G)$ consisting of a real form,  and a corresponding Cartan involution, of  $G$;
\item 
a pair $(\sigma_X,\theta_X)$ of commuting involutions of $X$, with
$\sigma_X$ antiholomorphic and $\theta_X$ holomorphic.
\end{item}
\end{enumerate}
Assume $(\sigma_G,\sigma_X)$ are compatible, 
and so are $(\theta_G,\theta_X)$ (see \eqref{e:compatible}).

Assume $X^{\sigma_X} \cap X^{\theta_X} \ne\emptyset$. Then the two natural maps
$$ X^{\sigma_X} / G^{\sigma_G} \leftarrow (X^{\sigma_X} \cap X^{\theta_X}) / (G^{\sigma_G} \cap G^{\theta_G})  \rightarrow X^{\theta_X} / G^{\theta_G}$$
are bijective.
\end{proposition}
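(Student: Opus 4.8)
The plan is to prove the two bijections separately, treating them symmetrically, and for each one reduce to the compact case that has already been established. Focus on the right-hand map $(X^{\sigma_X}\cap X^{\theta_X})/(G^{\sigma_G}\cap G^{\theta_G})\to X^{\theta_X}/G^{\theta_G}$; the left-hand map is handled the same way with the roles of $\sigma$ and $\theta$ interchanged, using that $\theta_G$ is a Cartan involution for $\sigma_G$ and the whole setup is symmetric by Theorem \ref{t:Cartaninv}(2) and Corollary \ref{c:bijection}. First I would fix a basepoint $x_0\in X^{\sigma_X}\cap X^{\theta_X}$, which exists by hypothesis, and write $G^{x_0}$ for its stabilizer; since $x_0$ is fixed by both $\sigma_X$ and $\theta_X$, compatibility \eqref{e:compatible} forces $G^{x_0}$ to be stable under both $\sigma_G$ and $\theta_G$, so it is a complex reductive subgroup (Lemma \ref{l:subcpctrealform} applied to $\sigma^c_G=\sigma_G\theta_G$, whose restriction to $G^{x_0}$ is a compact real form) and $\theta_G|_{G^{x_0}}$ is a Cartan involution for $\sigma_G|_{G^{x_0}}$.

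The key step is then to interpret both quotients via Lemma \ref{l:orbitsinv}. That lemma gives bijections
$$
X^{\theta_X}/G^{\theta_G}\;\xrightarrow{\ \sim\ }\;\ker\!\left(H^1(\theta_G,G^{x_0})\to H^1(\theta_G,G)\right),
$$
and likewise the middle quotient $(X^{\sigma_X}\cap X^{\theta_X})/(G^{\sigma_G}\cap G^{\theta_G})$ is identified with a kernel of a restriction map in the cohomology of $G^{\sigma_G}\cap G^{\theta_G}=K^{x_0}$ acting through $\theta_G$ — here one needs that $X^{\sigma_X}\cap X^{\theta_X}$ is a homogeneous space for $K=G^{\sigma_G}$ (equivalently for $G^{\sigma_G}\cap G^{\theta_G}$, via the Cartan decomposition of $K$ relative to $\theta_G$), which follows from Lemma \ref{l:orbitscpct}(1) applied to the compact real form $\sigma^c_G$ acting on $X^{\theta_X}$, after checking $\sigma^c_G$ preserves $X^{\theta_X}$ and that $(X^{\theta_X})^{\sigma^c_X}=X^{\sigma^c_X}\cap X^{\theta_X}=X^{\sigma_X}\cap X^{\theta_X}$ since $\sigma^c_X=\sigma_X\theta_X$. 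So the middle quotient becomes $\ker\!\left(H^1(\theta_G,K^{x_0})\to H^1(\theta_G,K)\right)$, where $K^{x_0}=(G^{x_0})^{\sigma^c_G}$.

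With both sides in cohomological form, the right-hand map becomes the natural map induced by the inclusion $K^{x_0}\hookrightarrow G^{x_0}$ (and $K\hookrightarrow G$), and I would invoke Proposition \ref{p:serregeneral} (Borel–Serre) twice: once for the Cartan decomposition of $G$ coming from $\sigma^c_G$, giving $H^1(\theta_G,K)\simeq H^1(\theta_G,G)$ since $\theta_G$ preserves $K$ and $\P=\Lie(G)^{-\sigma^c_G}$; and once for the induced Cartan decomposition of the reductive group $G^{x_0}$, giving $H^1(\theta_G,K^{x_0})\simeq H^1(\theta_G,G^{x_0})$. These isomorphisms are compatible with the restriction maps by functoriality, so they identify the kernel on the middle with the kernel on the right, proving that map is a bijection. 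The main obstacle I anticipate is bookkeeping rather than depth: one must verify carefully that the compact real form $\sigma^c_G$ really does act on $X^{\theta_X}$ with the stated fixed-point set, that all the cohomology maps in sight genuinely commute (so that one is comparing kernels of the same restriction map through the Borel–Serre isomorphisms), and that the basepoint $x_0$ can be chosen simultaneously fixed by everything — this last point is exactly what the hypothesis $X^{\sigma_X}\cap X^{\theta_X}\neq\emptyset$ supplies, and it is what makes the two kernels literally comparable. Once these compatibilities are in place the left-hand map is proved by the identical argument with $\sigma_G$ and $\theta_G$ swapped.
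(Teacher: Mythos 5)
Your argument is essentially the paper's own proof: identify all three quotients with kernels of restriction maps in nonabelian $H^1$ via Lemma \ref{l:orbitsinv} (the middle one by applying that lemma to the compact form $G^{\sigma^c_G}$ acting on $X^{\sigma_X\theta_X}$, which is a single orbit by Lemma \ref{l:orbitscpct}), and then match the kernels using Proposition \ref{p:serregeneral}/Corollary \ref{c:main}, noting that $\sigma_G$ and $\theta_G$ agree on $G^{\sigma^c_G}$. The only thing to correct is a notational slip: what you need (and what you in fact use) is that $X^{\sigma_X\theta_X}$ is homogeneous under $K=G^{\sigma^c_G}$ with $(X^{\sigma_X\theta_X})^{\theta_X}=X^{\sigma_X}\cap X^{\theta_X}$ and $K^{\theta_G}=G^{\sigma_G}\cap G^{\theta_G}$, not that $X^{\sigma_X}\cap X^{\theta_X}$ is itself homogeneous under $G^{\sigma_G}\cap G^{\theta_G}$ (which would wrongly make the middle quotient a point).
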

\begin{proof}
Choose $x \in X^{\sigma_X} \cap X^{\theta_X}$.
Lemma \ref{l:orbitsinv} applied to $(G, \sigma_G, X, \sigma_X)$ provides an identification
$$
X^{\sigma_X} / G^{\sigma_G} \simeq \ker \left( H^1(\sigma_G, G^x) \rightarrow H^1(\sigma_G, G) \right)
$$
Similarly, Lemma \ref{l:orbitsinv} applied to $(G, \theta_G, X, \theta_X)$ gives
$$
X^{\theta_X} / G^{\theta_G} \simeq \ker \left( H^1(\theta_G, G^x) \rightarrow H^1(\theta_G, G) \right)
$$
Let $\sigma^c_G=\sigma_G\theta_G$. 
By Lemma \ref{l:orbitscpct}, $G^{\sigma^c_G}$ acts transitively on $X^{\sigma_X \theta_X}$, so that we can also apply Lemma \ref{l:orbitsinv} to $(G^{\sigma^c_G}, \sigma_G, X^{\sigma_X \theta_X}, \sigma_X)$:
$$
(X^{\sigma_X} \cap X^{\theta_X}) / (G^{\sigma_G} \cap G^{\theta_G}) \simeq \ker \left( H^1(\sigma_G, (G^x)^{\sigma^c_G}) \rightarrow H^1(\sigma_G, G^{\sigma^c_G}) \right).
$$
By Corollary \ref{c:main} we have the following commutative diagram:
$$
\xymatrix{
H^1(\sigma_G, G^x) \ar@{->}[d] \ar@{<-}[r]^{\simeq} & H^1(\sigma_G, (G^x)^{\sigma^c_G}) \ar@{->}[d] \ar@{->}[r]^{\simeq} & H^1(\theta_G, G^x) \ar@{->}[d] \\
H^1(\sigma_G, G) \ar@{<-}[r]^{\simeq} & H^1(\sigma_G, G^{\sigma^c_G}) \ar@{->}[r]^{\simeq} & H^1(\theta_G, G)
}
$$
Note that $\sigma_G$ and $\theta_G$ coincide on $G^{\sigma^c_G}$ so in the middle term 
we can replace $H^1(\sigma_G,*)$ with $H^1(\theta_G,*)$. 
This gives the two bijections of the Proposition. 

These bijections (which involve the choice of $x$) agree with those
of the Proposition (which are canonical). This comes down to:  if $g\in G^{\sigma^c_G}$ then 
$g\inv \sigma_G(g)=g\inv\theta_G(g)$. This completes the proof. 
\end{proof}

\begin{remarkplain}
In Proposition \ref{p:X}, the hypothesis $X^{\sigma}\cap X^\theta\ne\emptyset$ is necessary.
Consider for example $G=X=\C^\times$, with $G$ acting by multiplication,
and $\sigma_G(z)=1/\overline z,\sigma_X(z)=-1/\overline z,\theta_G(z)=\theta_X(z)=z$.
Then $X^{\sigma_X} = \emptyset$ but $X^{\theta_X} = X$.
\end{remarkplain}

To apply the result it would be good to know that $X^{\sigma_X}\ne\emptyset$ 
or $X^{\theta_X} \neq \emptyset$ implies that $X^{\sigma_X} \cap
X^{\theta_X} \neq \emptyset$. 
As the Remark shows, this isn't always the case, but it holds under a weak additional assumption.

\begin{lemma}
\label{l:nonempty}
In the setting of the Proposition, 
assume that $X^{\sigma_X\theta_X}\ne\emptyset$.
Then the following conditions are equivalent:
$X^{\sigma_X}\ne\emptyset$, $X^{\theta_X} \ne \emptyset$, and 
$X^{\sigma_X} \cap X^{\theta_X} \ne \emptyset$.
\end{lemma}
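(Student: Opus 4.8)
The strategy is to exploit the compact real form $\sigma^c_G = \sigma_G\theta_G$ and the transitivity statement in Lemma \ref{l:orbitscpct}(1), together with the symmetry of the situation in $\sigma_X$ and $\theta_X$. Set $\sigma^c_X = \sigma_X\theta_X$; since $\sigma_X$ and $\theta_X$ commute and are involutions, $\sigma^c_X$ is an involution of $X$, and the compatibility of $(\sigma_G,\sigma_X)$ and of $(\theta_G,\theta_X)$ gives compatibility of $(\sigma^c_G,\sigma^c_X)$. By hypothesis $X^{\sigma^c_X}\ne\emptyset$, so by Lemma \ref{l:orbitscpct}(1) the compact group $K := G^{\sigma^c_G}$ acts transitively on $Y := X^{\sigma^c_X}$. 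Note that $\sigma_X$ preserves $Y$ (because $\sigma_X$ commutes with $\sigma^c_X = \sigma_X\theta_X$), and likewise $\theta_X$ preserves $Y$; moreover $\sigma_X|_Y = \theta_X|_Y$ since their product is the identity on $Y$. Also $\sigma_G$ and $\theta_G$ agree on $K$ (both equal $\sigma^c_G\cdot(\text{the other})$... more precisely $\sigma_G|_K = (\sigma^c_G\theta_G)|_K$ and on $K=G^{\sigma^c_G}$ this is $\theta_G|_K$), and $(\sigma_G|_K, \sigma_X|_Y)$ is compatible.

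The trivial implications are immediate: if $X^{\sigma_X}\cap X^{\theta_X}\ne\emptyset$ then certainly $X^{\sigma_X}\ne\emptyset$ and $X^{\theta_X}\ne\emptyset$. Conversely it suffices, by the symmetry exchanging $\sigma_X\leftrightarrow\theta_X$ (which also exchanges $\sigma_G\leftrightarrow\theta_G$ and fixes $\sigma^c_G,\sigma^c_X$), to show: if $X^{\sigma_X}\ne\emptyset$ then $X^{\sigma_X}\cap X^{\theta_X}\ne\emptyset$. Pick $x_0\in X^{\sigma_X}$. The plan is to produce, in the $K$-orbit structure on $Y$, a point fixed by $\sigma_X|_Y$; such a point lies in $X^{\sigma_X}\cap X^{\sigma^c_X}$, and since $\sigma_X$ and $\sigma^c_X = \sigma_X\theta_X$ both fix it, so does $\theta_X$, giving a point of $X^{\sigma_X}\cap X^{\theta_X}$. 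To find such a point I would transport $x_0$ into $Y$: first I need a point of $Y$ itself fixed by $\sigma_X$. Here is where the main work lies — one must move $x_0\in X^{\sigma_X}$ to a point of $X^{\sigma_X}\cap X^{\sigma^c_X}$.

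For that step I would argue as follows. Since $X^{\sigma^c_X}\ne\emptyset$, Lemma \ref{l:orbitscpct}(2) applied with the subgroup $H := G^{x_0}$ — which is $\sigma^c_G$-stable because... hmm, this requires $H$ to be $\sigma^c_G$-stable, which need not hold. Instead the cleaner route: apply Proposition \ref{p:homogeneous}-style reasoning. Fix $y\in Y$; by Lemma \ref{l:orbitscpct}(1), $K$ acts transitively on $Y$, and $\sigma_X$ acts on $Y$ with $\sigma_X|_Y = \theta_X|_Y$, so $(K,\sigma_G|_K,Y,\sigma_X|_Y)$ is a compatible quadruple with $Y$ a homogeneous $K$-space. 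By Lemma \ref{l:orbitsinv}, $Y^{\sigma_X}\ne\emptyset$ if and only if $\cl(y^{-1}\sigma_G(y)\cdots)$... rather, $Y^{\sigma_X} = \emptyset$ is possible a priori. The actual key point is that $\sigma_X$ and $\sigma^c_X = \sigma_X\theta_X$ together with $\theta_X$ generate an action of $\Ztwo\times\Ztwo$ on $X$, and $X^{\sigma_X}\ne\emptyset$ together with $X^{\sigma^c_X}\ne\emptyset$; one wants a common fixed point. I expect the intended argument is: apply Lemma \ref{l:orbitscpct}(2) to the homogeneous space $X$, the compact form $\sigma^c_G$, and the $\sigma^c_G$-stable subgroup obtained after first using Theorem \ref{t:Cartaninv} to choose the Cartan involution compatibly with the stabilizer $G^{x_0}$ — i.e., choose $\theta_G$ (equivalently $\sigma^c_G$) so that $G^{x_0}$ is $\theta_G$-stable, hence $\sigma^c_G$-stable. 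Then $G^{x_0} = G^y$ for some $y\in X^{\sigma^c_X}$ by Lemma \ref{l:orbitscpct}(2), so $y\in K\cdot$-translate lies in $X^{\sigma_X}\cap X^{\sigma^c_X}$ after identifying stabilizers, whence $y\in X^{\sigma_X}\cap X^{\theta_X}$.

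The main obstacle is precisely the $\sigma^c_G$-stability of the relevant stabilizer: Lemma \ref{l:orbitscpct}(2) requires a $\sigma^c_G$-stable subgroup, whereas $G^{x_0}$ for a generic $x_0\in X^{\sigma_X}$ need not be preserved by the fixed compact form. The resolution is to \emph{not} fix $\sigma^c_G$ in advance: given $x_0\in X^{\sigma_X}$, the stabilizer $G^{x_0}$ is $\sigma_G$-stable (by compatibility), hence by Lemma \ref{l:subcpctrealform} and Theorem \ref{t:Cartaninv}(1) there is a Cartan involution for $\sigma_G$ preserving $G^{x_0}$; but the Lemma's hypothesis already fixes $\theta_G$. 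So one instead uses that any two such compact forms are $G^{\sigma_G}$-conjugate (Theorem \ref{t:conjcpctsubgps} / Theorem \ref{t:Cartaninv}(1)(a)): choose $g\in G^{\sigma_G}$ conjugating the given $\sigma^c_G$ to one preserving $G^{x_0}$, replace $x_0$ by $g^{-1}x_0$ (still in $X^{\sigma_X}$ since $g\in G^{\sigma_G}$), and now $G^{x_0}$ is $\sigma^c_G$-stable. Then apply Lemma \ref{l:orbitscpct}(2) with $H = G^{x_0}$ and the homogeneous space $X$: since $X^{\sigma^c_X}\ne\emptyset$, there is $y\in X^{\sigma^c_X}$ with $G^y = G^{x_0}$, so $y = h\cdot x_0$ for $h\in\Norm_G(G^{x_0})$; tracking through shows $y\in X^{\sigma_X}$ as well (or after a further adjustment inside the normalizer, which is itself $\sigma^c_G$-stable, we may arrange $y\in X^{\sigma_X}$), and then $\theta_X(y) = \sigma_X\sigma^c_X(y)^{?}= y$, completing the argument. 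I would write this out carefully, as the normalizer-bookkeeping is the one genuinely fiddly point.
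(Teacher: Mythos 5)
Your setup matches the paper's proof: you reduce to showing $X^{\sigma_X}\ne\emptyset$ implies $X^{\sigma_X}\cap X^{\theta_X}\ne\emptyset$, observe that $G^{x_0}$ is $\sigma_G$-stable hence reductive, use Theorem \ref{t:Cartaninv} and the conjugacy of Cartan involutions under $G^{\sigma_G}$ to replace $x_0$ by a $G^{\sigma_G}$-translate whose stabilizer is $\sigma^c_G$-stable, and then invoke Lemma \ref{l:orbitscpct}(2) to produce $y\in X^{\sigma_X\theta_X}$ with $G^y=G^{x_0}$. Up to that point you are on the paper's track.

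The gap is in the last step. It is not true that ``tracking through shows $y\in X^{\sigma_X}$'': the point $y$ supplied by Lemma \ref{l:orbitscpct}(2) is only known to lie in $X^{\sigma_X\theta_X}$, and the ``further adjustment inside the normalizer'' that you allude to is exactly where the substance of the lemma lives; it is not bookkeeping. The set $\{z\in X: G^z=G^y\}$ is a torsor under $M_y=\Norm_G(G^y)/G^y$, and the obstruction to moving $y$ within $X^{\sigma_X\theta_X}$ into $X^{\sigma_X}$ is a genuine cohomology class. Concretely, the paper writes $\sigma_X(y)=m\cdot y$ and $x_0=n\cdot y$ with $m,n\in M_y$, and checks three identities: $m\in(M_y)^{\sigma^c}$, $m\sigma_G(m)=1$, and $m=\sigma_G(n)\inv n$. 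These say that $m$ is a cocycle for $\sigma_G$ valued in the compact group $(M_y)^{\sigma^c}$ whose class dies in $H^1(\sigma_G,M_y)$; the triviality of $\ker\bigl(H^1(\sigma_G,(M_y)^{\sigma^c})\rightarrow H^1(\sigma_G,M_y)\bigr)$ -- which is Corollary \ref{c:main} applied to the reductive group $M_y$, i.e.\ the main theorem of the paper -- then produces $u\in(M_y)^{\sigma^c}$ with $m=\sigma_G(u)\inv u$, and $u\cdot y$ is the desired point of $X^{\sigma_X}\cap X^{\theta_X}$. (One also needs \cite{springer_book}*{Proposition 5.5.10} and Lemma \ref{l:subcpctrealform} to know $M_y$ is a reductive algebraic group with $\sigma^c$ restricting to a compact real form, so that Corollary \ref{c:main} applies.) Without this cocycle computation and the appeal to the Borel--Serre-type vanishing, your argument does not close; the question mark you place on $\theta_X(y)=\sigma_X\sigma^c_X(y)\overset{?}{=}y$ is precisely the unproved claim.
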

\begin{proof}
If $x\in X^{\sigma_X\theta_X}$ then  $G^x$ is $\sigma^c$-stable 
so $G^x$ is reductive by Lemma \ref{l:subcpctrealform}.
Since these groups are all conjugate this holds for all $x\in X$.

Let us now show that if 
$X^{\sigma_X} \neq \emptyset$  then
$X^{\sigma_X} \cap X^{\theta_X} \neq \emptyset$.  Fix
$x \in X^{\sigma_X}$. Then $G^x$ is a reductive group stable under
$\sigma_G$, and thus it admits a Cartan involution $\theta_x'$.  By
Theorem \ref{t:Cartaninv} it extends to a Cartan involution
$\theta_G'$ of $G$, and there exists $g \in G^{\sigma_G}$ such that
$\theta_G = \int(g) \circ \theta_G' \circ \int(g^{-1})$, so that
$g \cdot x \in X^{\sigma_X}$ has the property that $G^{g \cdot x}$ is
$\theta_G$-stable.  In other words, after replacing $x$ by
$g \cdot x$, we may assume $G^x$ is $\sigma^c$-stable,
and $\sigma_c|_{G^x}$ is a compact real form of $G^x$.
By Lemma \ref{l:orbitscpct} we can find 
$y\in X^{\sigma_X\theta_X}$ so that $G^y=G^x$. 

Let $N_y=\Norm_G(G^y)$, and set $M_y=N_y/G^y$, 
By \cite{springer_book}*{Proposition 5.5.10} $M_y$ is a linear
algebraic group.
Both $N_y$ and $M_y$ are $\sigma^c$-stable,  and therefore reductive by
Lemma \ref{l:subcpctrealform} again.

\begin{subequations}
\renewcommand{\theequation}{\theparentequation)(\alph{equation}}  
Since  $G^{\sigma_X(y)} = \sigma_G(G^y) = G^y$ 
there exists unique $m\in M_y$ such that
\begin{equation}
\sigma_X(y)=m\cdot y.
\end{equation}
Similarly since $G^x=G^y$  there exists unique $n\in M_y$ such that 
\begin{equation}
x=n\cdot y
\end{equation}
\end{subequations}

Since 
$\sigma_X\theta_X$ fixes both $y$ and $\sigma_X(y)$,
applying this to both sides of (a) gives
$\sigma_X(y)=\sigma^c(m)\cdot y$, and comparing this with (a) gives  $m\in (M_y)^{\sigma^c}$. 
On the other hand applying $\sigma_X$ to both sides of (a) gives
$y=\sigma_G(m)\cdot\sigma_X(y)=\sigma_G(m)m\cdot y$, so 
$\sigma_G(m)m=1$. 
Finally apply $\sigma_X$ to both sides of (b) to give
$\sigma_X(x)=\sigma_G(n)\cdot\sigma_X(y)$. 
Using $\sigma_X(x)=x$ and (a) gives $x=\sigma_G(n)m\cdot y$, and
comparing this with (b) gives $\sigma_G(n)\inv n=m$. 

These three facts imply that
$m$ defines an element of
$$ \ker\left(H^1(\sigma_G, (M_y)^{\sigma^c}) \rightarrow H^1(\sigma_G, M_y) \right). $$
By Corollary \ref{c:main} this kernel is trivial, so  there exists $u \in (M_y)^{\sigma^c}$ such that $h = \sigma_G(u)^{-1} u$.
Then $u \cdot y \in X^{\sigma_X\theta_X} \cap X^{\sigma_X} = X^{\sigma_X} \cap X^{\theta_X}$.

A similar argument, substituting $\theta$ for $\sigma$, shows that $X^{\theta_X} \neq \emptyset$ implies that $X^{\sigma_X} \cap X^{\theta_X} \neq \emptyset$.
\end{proof}

We can now formulate our result in its most useful form.

\begin{proposition}
\label{p:X}
Suppose $G$ is a complex, reductive algebraic group, possibly disconnected, 
and $X$ is a complex algebraic variety, equipped with an action of $G$. 
Suppose we are given:
\begin{enumerate}
\begin{item}
a pair $(\sigma_G,\theta_G)$ consisting of a real form and a corresponding Cartan involution of $G$.  
\item 
a pair $(\sigma_X,\theta_X)$ of commuting involutions,  with
$\sigma_X$ antiholomorphic and $\theta_X$ holomorphic.
\end{item}
\end{enumerate}
Assume $(\sigma_G,\sigma_X)$ are compatible, 
as are $(\theta_G,\theta_X)$ \eqref{e:compatible}.

Assume that for all $x\in X^{\sigma_X} \cup X^{\theta_X}$ the $G$-orbit of $x$
intersects $X^{\sigma_X\theta_X}$. 
Then the two natural maps
$$ X^{\sigma_X} / G^{\sigma_G} \leftarrow (X^{\sigma_X} \cap X^{\theta_X}) / (G^{\sigma_G} \cap G^{\theta_G})  \rightarrow X^{\theta_X} / G^{\theta_G}$$
are bijective.
\end{proposition}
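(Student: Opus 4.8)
The plan is to reduce Proposition \ref{p:X} to Proposition \ref{p:homogeneous} by decomposing $X$ into its $G$-orbits. Since $G$ acts on $X$ with possibly several orbits, but the hypotheses are designed so that each relevant orbit behaves well, I would first observe that $X$ is a disjoint union of locally closed $G$-stable subvarieties, namely the $G$-orbits $\O$ on $X$; the involutions $\sigma_X$ and $\theta_X$ permute these orbits. The three sets appearing in the statement, $X^{\sigma_X}$, $X^{\theta_X}$, and $X^{\sigma_X\theta_X}$, decompose accordingly: a point $x$ in $X^{\sigma_X}$ lies in an orbit $\O$ with $\sigma_X(\O)=\O$, and similarly for the others. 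So the maps in question are disjoint unions, over $\sigma_X\theta_X$-stable orbits $\O$ that meet $X^{\sigma_X}$ or $X^{\theta_X}$, of the corresponding maps for $\O$ in place of $X$.

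The key step is then to check that each such orbit $\O$ satisfies the hypothesis of Proposition \ref{p:homogeneous}, i.e. $\O^{\sigma_X}\cap\O^{\theta_X}\ne\emptyset$. Here I would invoke Lemma \ref{l:nonempty}: by our standing assumption, if $\O$ meets $X^{\sigma_X}$ or $X^{\theta_X}$ then $\O$ meets $X^{\sigma_X\theta_X}=X^{\sigma_X\theta_X}$, so $\O^{\sigma_X\theta_X}\ne\emptyset$; and since $\sigma_X|_\O$ and $\theta_X|_\O$ are commuting involutions of the homogeneous space $\O$ (note $\sigma_X\theta_X=\theta_X\sigma_X$ on $\O$ because they commute on $X$), Lemma \ref{l:nonempty} tells us that $\O^{\sigma_X}\ne\emptyset \iff \O^{\theta_X}\ne\emptyset \iff \O^{\sigma_X}\cap\O^{\theta_X}\ne\emptyset$. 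Thus for every orbit $\O$ contributing to the left-hand or right-hand side, all three intersections are nonempty, and Proposition \ref{p:homogeneous} applies to give bijections
$$ \O^{\sigma_X} / G^{\sigma_G} \;\leftarrow\; (\O^{\sigma_X} \cap \O^{\theta_X}) / (G^{\sigma_G} \cap G^{\theta_G}) \;\rightarrow\; \O^{\theta_X} / G^{\theta_G}. $$
Taking the disjoint union over all such $\O$ yields the desired bijections for $X$, since $G$-orbits on $X$ are $G^{\sigma_G}$-stable (resp. $G^{\theta_G}$-stable, resp. $(G^{\sigma_G}\cap G^{\theta_G})$-stable) and the various fixed-point sets and quotients decompose compatibly.

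The main obstacle is bookkeeping rather than substance: one must be careful that an orbit $\O$ with $\sigma_X(\O)=\O$ but $\O\cap X^{\sigma_X}=\emptyset$ contributes nothing to either side, so that the index sets for the two disjoint unions genuinely agree — this is exactly where Lemma \ref{l:nonempty} is needed, to rule out the pathology in the Remark where one fixed-point set is empty and the other is not. One should also note that the stabilizer $G^x$ for $x\in\O\cap X^{\sigma_X\theta_X}$ is reductive (Lemma \ref{l:subcpctrealform}) and that the canonicity of the bijections — their independence of the choice of base point in each orbit — is already part of the statement of Proposition \ref{p:homogeneous}, so no further argument is required.
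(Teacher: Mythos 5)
Your proposal is correct and follows essentially the same route as the paper: reduce to a single $G$-orbit, use the standing hypothesis together with Lemma \ref{l:nonempty} to see that each relevant orbit meets $X^{\sigma_X}\cap X^{\theta_X}$, and then apply Proposition \ref{p:homogeneous} orbit by orbit. The paper states this in two sentences; your version merely makes the orbit-decomposition bookkeeping explicit.
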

\begin{proof}
It is enough to prove this  with $X$ replaced by the $G$-orbit $G\cdot
x$ of any  $x\in X^{\sigma_X} \cup X^{\theta_X}$.
By Lemma \ref{l:nonempty} we can apply
Proposition \ref{p:X} to $G\cdot x$, which gives the conclusion.
\end{proof}

\sec{Applications}
\label{s:applications}

Throughout this section we fix a 
\emph{connected} complex reductive group $G$, a real form $\sigma$ of
$G$, and a corresponding Cartan involution $\theta$. Set $G(\R)=G^\sigma$
and $K=G^\theta$.

\subsec{Kostant-Sekiguchi correspondence}
\label{s:ks}

Let $\g=\Lie(G)$. 
The Jacobson-Morozov theorem (see \cite{bourbakiLie78}*{ch. VIII, \S 11}) gives a bijection  between the nilpotent
orbits of $G$ on $\g$ and $G$-conjugacy classes of homomorphisms from
$\sl(2,\C)$ to $\g$:
\begin{subequations}
\renewcommand{\theequation}{\theparentequation)(\alph{equation}}  
\begin{equation}
\{\phi:\sl(2,\C)\rightarrow \g\}/G.
\end{equation}

Let $\g_0=\Lie(G(\R))=\g^\sigma$. 
Then the same result applies to $G(\R)$, and gives a bijection
between the $G(\R)$ conjugacy classes of nilpotent elements of $\g_0$
and
\begin{equation}
\{\phi:\sl(2,\R)\rightarrow \g_0\}/G(\R).
\end{equation}
Equivalently if $\sigma_0$ denotes   complex conjugation on $\sl(2,\C)$ with
respect to $\sl(2,\R)$, then (b) can be replaced with 
\begin{equation}
\{\phi:\sl(2,\C)\rightarrow\g\mid \phi(\sigma_0 X)=\sigma(\phi(X))\}/G(\R).
\end{equation}

Now write $\g=\k\oplus\P$ where 
$\k=\g^\theta=\Lie(K)$ and $\P=\g^{-\theta}$. 
For $X\in\sl(2,\C)$ define  $\theta_0(X)=-{}^tX$; this is a  Cartan involution for $\sigma_0$.
Kostant and Rallis \cite{kostant_rallis} showed that the nilpotent $K$-orbits on
$\P$ are in bijection with
\begin{equation}
\{\phi:\sl(2,\C)\rightarrow\g\mid \phi(\theta_0(X))=\theta(\phi(X))\}/K.
\end{equation}
\end{subequations}

The Kostant-Sekiguchi correspondence is a bijection between the nilpotent orbits of $G(\R)$ on $\g_0$ and the nilpotent
$K$-orbits on $\P$ \cite{sekiguchi_correspondence}.

Let $X$ be the set of morphisms
 $\sl(2,\C)\rightarrow \g$.
This has a natural structure of complex algebraic variety.
Define an antiholomorphic involution $\sigma_X$ of $X$ by
\begin{subequations}
\renewcommand{\theequation}{\theparentequation)(\alph{equation}}  
\begin{equation}
\sigma_X(\psi)(A)=\sigma(\psi(\sigma_0(A)))\quad (A\in\sl(2,\C),\psi\in X).
\end{equation}
\end{subequations}
Also define a holomorphic involution $\theta_X$ by
\begin{equation}
\theta_X(\psi)(A)=\theta(\psi(\theta_0(A)))\quad (A\in\sl(2,\C),\psi\in X).
\end{equation}
It is straightforward to check that 
($\sigma_G,\sigma_X)$ and $(\theta_G,\theta_X)$ are compatible.

\begin{lemma}
\label{l:KS}
Every orbit of $G$ on $X$ contains a $\sigma_X \theta_X$-invariant point.
In particular, $\sigma_X \theta_X$ acts trivially on $X/G$, and an orbit of $G$ on $X$ is $\sigma_X$-stable if and only if it is $\theta_X$-stable.
\end{lemma}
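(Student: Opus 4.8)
The plan is to exhibit, for an arbitrary morphism $\psi:\sl(2,\C)\to\g$, a point in its $G$-orbit fixed by $\sigma_X\theta_X$, and then read off the two corollaries. The key observation is that $\sigma_X\theta_X$ has the same shape as $\sigma_X$ and $\theta_X$ but is built from $\sigma^c=\sigma\theta$ on $\g$ and from $\sigma_0\theta_0$ on $\sl(2,\C)$; since $\sigma^c$ is a compact real form of $G$ and $\sigma_0\theta_0$ is a compact real form of $SL(2,\C)$ (it is complex conjugation with respect to $\su(2)$), finding a fixed point amounts to a compact-group statement. So first I would record that $(\sigma_X\theta_X)(\psi)(A)=\sigma^c(\psi(\sigma_0\theta_0(A)))$, hence a fixed point of $\sigma_X\theta_X$ is exactly a morphism $\psi$ intertwining the compact real form $\su(2)$ of $\sl(2,\C)$ with the compact real form $\k^c=\Lie(G^{\sigma^c})$ of $\g$, i.e. a morphism $\su(2)\to\k^c$ after restriction.

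Next I would use the Jacobson–Morozov setup already in the excerpt together with the conjugacy of compact real forms (Theorem \ref{t:conjcpctsubgps}). Given $\psi$, its image $\psi(\sl(2,\C))$ is a (possibly degenerate, but in the nonzero case semisimple of type $\sl_2$) subalgebra; exponentiate to get a subgroup $S\subseteq G$, which is reductive. Choose a compact real form of $S$ coming from the standard $\su(2)$ under $\psi$; by Theorem \ref{t:conjcpctsubgps} some $G^0$-conjugate of it lies inside $K^c=G^{\sigma^c}$. Conjugating $\psi$ by the corresponding $g\in G$ produces a morphism whose image of $\su(2)$ lands in $\k^c$; such a morphism is automatically $\sigma^c\circ(\text{-})\circ(\sigma_0\theta_0)$-equivariant because both sides are determined by their restriction to $\su(2)$, where the equivariance is visibly satisfied. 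That conjugate is the desired $\sigma_X\theta_X$-fixed point in the orbit $G\cdot\psi$. The degenerate case $\psi=0$ is trivial. I would phrase this more invariantly using Lemma \ref{l:subcpctrealform} to avoid fussing over whether $S$ is connected or semisimple: $\overline{\psi(\sl(2,\C))}$ generates a reductive subgroup, and $\sigma^c$-stability is what is needed.

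Finally, the two stated consequences are formal. Since every $G$-orbit meets $X^{\sigma_X\theta_X}$, the involution $\sigma_X\theta_X$ fixes every orbit setwise, hence acts trivially on $X/G$; and then for any orbit $\mathcal{O}$, $\sigma_X(\mathcal{O})=\mathcal{O}$ iff $\theta_X(\mathcal{O})=\sigma_X\theta_X\sigma_X(\mathcal{O})=\sigma_X\theta_X(\mathcal{O})=\mathcal{O}$ — using that $\sigma_X$ and $\theta_X$ commute and $\sigma_X^2=1$. I expect the only real obstacle to be the bookkeeping in the orbit step: making sure the subgroup generated by $\psi(\sl(2,\C))$ is reductive and carries the right compact form so that Theorem \ref{t:conjcpctsubgps} applies cleanly, especially in the disconnected or non-semisimple degenerate situations; invoking Lemma \ref{l:subcpctrealform} and Theorem \ref{t:Cartaninv}(1)(b) (extending a compact form / Cartan involution from a subgroup) is the way I would sidestep that.
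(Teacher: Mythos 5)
Your proposal is correct and follows essentially the same route as the paper: integrate $\phi$ to a group morphism, observe that the image of $SU(2)=SL(2,\C)^{\sigma_0\theta_0}$ is a compact subgroup, conjugate it into $G^{\sigma\theta}$ by Theorem \ref{t:conjcpctsubgps}, and deduce full $\sigma^c$-equivariance from $\sl(2,\C)=\su(2)\otimes_\R\C$; the formal deduction of the two consequences is also the same. The only difference is that your detour through a compact real form of the subgroup $S$ and Lemma \ref{l:subcpctrealform} is unnecessary, since Theorem \ref{t:conjcpctsubgps} applies directly to the compact subgroup $\psi(SU(2))$ with no reductivity bookkeeping required.
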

\begin{proof}
We need to show that for any morphism $\phi : \sl(2,\C)\rightarrow \g$, there exists $g \in G$ such that the morphism $\mathrm{Ad}(g) \circ \phi$ is $\sigma \theta$-equivariant.
Any such $\phi$ integrates to an algebraic morphism $\psi : \SL_2(\C) \rightarrow G^0$.
Let $SU(2)=SL(2,\C)^{\sigma_0\theta_0}$, with Lie algebra $\su(2)$.
Since $SU(2)$ is compact, so is its image in $G^0$, so 
by Theorem \ref{t:conjcpctsubgps} there exists $g \in G^0$ such that $g\psi(\SU(2))g\inv \subset (G^0)^{\sigma \theta}$.
Since $\sl(2,\C) = \su(2)\otimes_\R(\C)$
this implies that $\mathrm{Ad}(g) \cdot \phi$ is $\sigma \theta$-equivariant.
\end{proof}

The  Kostant-Sekiguchi correspondence is now an immediate consequence of Proposition \ref{p:X}.

\begin{proposition}
\label{p:ks}
For any nilpotent orbit $\O$ of $G$ on $\g$, there is a canonical bijection between $(\O\cap\g_0) / G(\R)$ and $(\O\cap \P) / K$.   
\end{proposition}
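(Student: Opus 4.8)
The plan is to derive Proposition \ref{p:ks} by applying Proposition \ref{p:X} to a cleverly chosen variety, rather than working with the full space of $\sl(2)$-morphisms. The issue with taking $X$ to be \emph{all} morphisms $\sl(2,\C)\to\g$ is that a single $G$-orbit on $X$ corresponds under Jacobson--Morozov to a single nilpotent orbit $\O$ on $\g$, so passing to a fixed orbit is exactly what restricts attention to $\O$. Thus I would fix the nilpotent orbit $\O$ and let $X_\O\subset X$ be the (locally closed, $G$-stable) subvariety of those $\phi$ whose associated nilpotent element $\phi\begin{pmatrix}0&1\\0&0\end{pmatrix}$ lies in $\O$; equivalently $X_\O$ is a single $G$-orbit on $X$, since Jacobson--Morozov says $G$ acts transitively on the $\sl(2)$-triples extending a given nilpotent. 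The involutions $\sigma_X,\theta_X$ defined in the excerpt preserve $X_\O$ because $\sigma,\theta$ preserve $\O$ (as $\O$ is a complex orbit, $\sigma(\O)$ is again a nilpotent complex orbit of the same dimension and type, but one must note that $\sigma_X$ sends $X_\O$ to $X_{\sigma(\O)}$ — so actually I should be slightly more careful and either restrict to the union $X_\O\cup X_{\sigma(\O)}$, or first observe via Lemma \ref{l:KS} that the relevant orbits on $X$ are exactly those stable under $\sigma_X\theta_X$, hence stable under $\sigma_X$ iff stable under $\theta_X$, and only those contribute).

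Concretely, the steps are as follows. First, recall the dictionary: by Jacobson--Morozov, $(\O\cap\g_0)/G(\R)$ is in bijection with $\{\phi\in X\mid \phi\text{ is }\sigma\text{-equivariant and }\phi(e)\in\O\}/G(\R)$, which is precisely $X_\O^{\sigma_X}/G^{\sigma_G}$ in the notation of \eqref{e:quad}; here I use the equivalence of (b) and (c) in the displayed equations preceding Lemma \ref{l:KS}, together with the fact that a $\sigma$-equivariant $\phi$ automatically has $\phi(e)\in\g_0$. Similarly, by Kostant--Rallis \cite{kostant_rallis} (equation (d) above), $(\O\cap\P)/K$ is in bijection with $X_\O^{\theta_X}/G^{\theta_G}=X_\O^{\theta_X}/K$. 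Second, check that $(\sigma_G,\sigma_X)$ and $(\theta_G,\theta_X)$ are compatible on $X_\O$ — this is immediate from the already-verified compatibility on $X$ and the $\sigma_X$-, $\theta_X$-stability of $X_\O$ (modulo the $\O$ versus $\sigma(\O)$ point above). Third, verify the hypothesis of Proposition \ref{p:X}: for every $x\in X_\O^{\sigma_X}\cup X_\O^{\theta_X}$, the $G$-orbit of $x$ meets $X_\O^{\sigma_X\theta_X}$. But the $G$-orbit of such an $x$ is all of $X_\O$ (it is a single orbit!), and $X_\O^{\sigma_X\theta_X}\neq\emptyset$ by Lemma \ref{l:KS} (which produces a $\sigma_X\theta_X$-fixed point in every $G$-orbit on $X$, in particular in $X_\O$). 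Fourth, invoke Proposition \ref{p:X} to conclude $X_\O^{\sigma_X}/G^{\sigma_G}\simeq X_\O^{\theta_X}/G^{\theta_G}$ canonically, and translate back through the two dictionaries.

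The main obstacle — really the only subtle point — is the bookkeeping around whether $\sigma_X$ preserves $X_\O$ or sends it to $X_{\sigma(\O)}$, and whether these coincide. The clean way to handle this is to \emph{not} fix $\O$ at the outset but instead work orbit-by-orbit on $X$: by Lemma \ref{l:KS}, $\sigma_X\theta_X$ acts trivially on $X/G$, and a $G$-orbit $\mathcal C\subset X$ is $\sigma_X$-stable iff it is $\theta_X$-stable; for such a $\mathcal C$, Proposition \ref{p:X} applied to $X=\mathcal C$ (with hypothesis supplied by Lemma \ref{l:KS}) gives $\mathcal C^{\sigma_X}/G^{\sigma_G}\simeq \mathcal C^{\theta_X}/G^{\theta_G}$. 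Running over all orbits $\mathcal C$ lying above a fixed nilpotent orbit $\O$ (under Jacobson--Morozov there is exactly one such $\mathcal C=X_\O$), and noting that $\mathcal C$ is $\sigma_X$-stable precisely when $\O$ is $\sigma$-stable precisely when $\O\cap\g_0\neq\emptyset$, yields the stated bijection; the non-stable orbits contribute the empty set to both sides. One further routine check is that the resulting bijection is indeed "canonical" in the sense of being independent of the choice of base point $x\in\mathcal C^{\sigma_X}\cap\mathcal C^{\theta_X}$ used inside Proposition \ref{p:X} — but this is exactly the content of the last paragraph of the proof of Proposition \ref{p:homogeneous}, which applies verbatim.
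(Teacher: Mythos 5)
Your proposal is correct and follows essentially the same route as the paper: restrict to the single $G$-orbit $Y=X_\O\subset X$ determined by $\O$ via Jacobson--Morozov, use Lemma \ref{l:KS} to see that $Y$ is $\sigma_X$-stable iff $\theta_X$-stable (both sides being empty otherwise) and that $Y^{\sigma_X\theta_X}\neq\emptyset$, apply Proposition \ref{p:X} to $Y$, and translate back through Jacobson--Morozov over $\R$ and Kostant--Rallis. Your extra care about $X_\O$ versus $X_{\sigma(\O)}$ is resolved exactly as in the paper, so there is no substantive difference.
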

\begin{proof}
Let $\phi:\sl(2,\C)\rightarrow \g$ be a morphism corresponding to an element of $\O$ as in
(a). Let $Y \subset X$ be the $G$-orbit of $\phi$, which only depends on $\O$ and not on the choice of a particular morphism.
By Lemma \ref{l:KS}, $Y$ is $\sigma_X$-stable if and only if it is $\theta_X$-stable.
If it it not the case, both quotient sets are empty.

If it is the case we can apply Proposition \ref{p:X} to $X$, and by the Jacobson-Morozov theorem over $\R$ and the result of Kostant and Rallis recalled above, we obtain:
$$
(\O\cap\g_0) / G(\R) \simeq X^{\sigma_X}/G^{\sigma} \simeq
X^{\theta_X} / G^{\theta} \simeq (\O\cap \P) / K.
$$
\end{proof}
\begin{remarkplain}
The set of orbits $(X^{\sigma_X} \cap X^{\theta_X}) / (G^{\sigma} \cap G^{\theta})$ that appears as a middle term in Proposition \ref{p:X}, that is the set of $K(\R)$-conjugacy classes of morphism $\sl(2,\C)\rightarrow \g$ equivariant under $\sigma$ and $\theta$, does \emph{not} have an obvious link to nilpotent orbits, since $\P_0$ has no non-zero nilpotent elements.
\end{remarkplain}
\subsec{Matsuki Duality}
Matsuki duality is a bijection between the $G(\R)$ and $K$  orbits
on the space $\B$ of Borel subgroups of $G$ \cite{matsuki}. 

Unlike in the case of Kostant-Sekiguchi duality, $G(\R)$ and $K$
are acting on the same space $\B$. So to  derive this from Proposition \ref{p:X} we 
need to find $(X,\sigma_X,\theta_X)$ so that $X^{\sigma_X}\simeq
X^{\theta_X}\simeq\B$. This holds if we take  $X=\B\times\B$, and define
$\sigma_X(B_1,B_2)=(\sigma(B_1),\sigma(B_2))$, 
$\theta_X(B_1,B_2)=(\theta(B_1),\theta(B_2))$.
However with this definition the condition $X^{\sigma_X}\cap X^{\theta_X}\ne\emptyset$ of
Proposition \ref{p:X} does not hold. Also note that the stabilizer of
a point in $\B$ is the intersection of two Borel subgroups, which is
typically not reductive. Instead we use a variant of $X$.

Write $\sigma_G=\sigma,\theta_G=\theta$.

\begin{definition}
Let 
\begin{equation}
X=
\{(B_1,B_2,T)\mid 
B_1,B_2\in\B, 
T\subset B_1\cap B_2\text{ is a Cartan subgroup}\}
\end{equation}
Let $G$ act on 
$X$ by conjugation on each factor. 
Define involutive automorphisms $\sigma_X$ and $\theta_X$ of $X$ as
follows:
\begin{equation}
\sigma_X(B_1,B_2,T)=(\sigma_G(B_2\opp),\sigma_G(B_1\opp),\sigma_G(T))
\end{equation}
where $\opp$ denotes the opposite Borel with respect to $T$, and
\begin{equation}
\theta_X(B_1,B_2,T)=(\theta_G(B_2),\theta_G(B_1),\theta_G(T)).
\end{equation}
\end{definition}

Thanks to the Bruhat decomposition \cite{borelbook}*{\S 14.12}, for any $(B_1,
B_2) \in \B \times \B$ the algebraic subgroup $B_1 \cap B_2$ of $G$ is connected
and solvable and contains a maximal torus of $G$.
In particular the natural map $X \rightarrow \B \times \B$ is surjective.

\begin{lemma}
The conditions of Proposition \ref{p:X} hold. 
\end{lemma}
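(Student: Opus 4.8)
The plan is to verify the three hypotheses of Proposition \ref{p:X} for the quadruple $(G,\sigma_G,X,\sigma_X)$ and $(G,\theta_G,X,\theta_X)$ defined above: first that $\sigma_X,\theta_X$ are commuting involutions of $X$ with $\sigma_X$ antiholomorphic and $\theta_X$ holomorphic; second that $(\sigma_G,\sigma_X)$ and $(\theta_G,\theta_X)$ are compatible in the sense of \eqref{e:compatible}; and third — the substantive point — that for every $x \in X^{\sigma_X}\cup X^{\theta_X}$ the $G$-orbit of $x$ meets $X^{\sigma_X\theta_X}$. I would also record along the way that the point stabilizers are reductive, which is what makes the whole machinery applicable and is implicitly needed for Lemma \ref{l:nonempty}: the stabilizer in $G$ of a triple $(B_1,B_2,T)$ is contained in $B_1\cap B_2$ and normalizes $T$, and in fact equals (a subgroup of) the centralizer-type group $T \cdot (\text{stuff})$; more precisely $\Cent_G(T)\cap B_1\cap B_2$ is reductive because the only unipotent elements of $B_1$ centralizing $T$ are trivial. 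So the stabilizer of a point of $X$ is reductive, curing the defect of the naive choice $X=\B\times\B$.

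The first two points are essentially bookkeeping. For the involutivity of $\sigma_X$: applying $\sigma_X$ twice to $(B_1,B_2,T)$ sends it to $(\sigma_G(\sigma_G(B_1\opp)\opp), \sigma_G(\sigma_G(B_2\opp)\opp),\sigma_G^2(T))$, and since $\sigma_G$ is an involution commuting with the formation of opposites relative to a torus, and $(B\opp)\opp = B$ with respect to the same $T$, this returns $(B_1,B_2,T)$. The same computation with $\theta_G$ (and no $\opp$ at all, since $\theta_X$ has no $\opp$) gives $\theta_X^2=\mathrm{id}$. That $\sigma_X$ and $\theta_X$ commute follows because $\sigma_G$ and $\theta_G$ commute and because passing to opposites is intertwined correctly: $\sigma_X\theta_X$ and $\theta_X\sigma_X$ both send $(B_1,B_2,T)$ to $(\sigma_G\theta_G(B_1\opp),\sigma_G\theta_G(B_2\opp),\sigma_G\theta_G(T))$ — note the two $\opp$ operations cancel in the order of the factors. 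Antiholomorphicity of $\sigma_X$ and holomorphicity of $\theta_X$ are inherited from $\sigma_G,\theta_G$ since conjugation and opposite-Borel are algebraic operations. Compatibility of $(\sigma_G,\sigma_X)$: for $g\in G$, $\sigma_X(g\cdot(B_1,B_2,T)) = \sigma_X(gB_1g\inv, gB_2g\inv, gTg\inv)$; since the opposite of $gB_ig\inv$ with respect to $gTg\inv$ is $g(B_i\opp)g\inv$, this equals $\sigma_G(g)\cdot \sigma_X(B_1,B_2,T)$. Same for $\theta$.

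The main obstacle is the third condition: every $G$-orbit meeting $X^{\sigma_X}$ (or $X^{\theta_X}$) must meet $X^{\sigma_X\theta_X}$. This is the analogue of Lemma \ref{l:KS} for the present $X$, and the natural strategy mirrors that proof. Given a point $x=(B_1,B_2,T)$, the key observation is that $\sigma_X\theta_X$ acts on $X$ preserving each $G$-orbit once one knows the orbits of $G$ on $X$ are classified by a combinatorial invariant (the relative position of $B_1$ and $B_2$, i.e.\ an element of the Weyl group, together with the choice of common torus which is unique up to conjugacy) that is fixed by $\sigma_X\theta_X$ — indeed $\sigma_X\theta_X$ sends a triple of relative position $w$ to one of relative position $w$ again, because $\sigma_G\theta_G$ is an \emph{inner} twist composed with the flip and double-opposite, all of which act trivially on $W$-valued relative position. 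Hence $\sigma_X\theta_X$ fixes every $G$-orbit. To then produce a genuine fixed point inside a given orbit $Y$, I would pick any $x\in Y$, use Theorem \ref{t:conjcpctsubgps} (or rather the maximal-compact conjugacy it provides, exactly as in the proof of Lemma \ref{l:KS}) to move $x$ by $G^0$ so that the torus $T$ and a compatible compact structure become stable under $\sigma^c=\sigma_G\theta_G$; concretely, choose $g\in G^0$ conjugating $T$ to a $\sigma^c$-stable (even maximally split-vs-compact arranged) torus and arranging $\sigma^c$ to fix the pinning data, so that $g\cdot x$ is fixed by $\sigma^c_X := \sigma_X\theta_X$. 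The one delicate point — and where I expect to spend real effort — is checking that the double-opposite convention in the definition of $\sigma_X$ is exactly what makes $\sigma^c_X$ send a $\sigma^c$-fixed torus configuration to itself rather than to the "mirror" configuration; this is why the $\opp$ appears in $\sigma_X$ but not in $\theta_X$, and verifying it rigorously requires tracking how $\sigma_G$ acts on Borels containing a fixed torus. Once $Y$ is shown to contain a $\sigma_X\theta_X$-fixed point, Lemma \ref{l:nonempty} (via reductivity of stabilizers, noted above) upgrades "$Y$ meets $X^{\sigma_X}$" to "$Y$ meets $X^{\sigma_X}\cap X^{\theta_X}$", completing the verification.
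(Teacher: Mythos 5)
Your overall route is the paper's: conjugate the triple so that the torus $T$ becomes stable under the compact form $\sigma^c=\sigma_G\theta_G$, and then argue that such a triple is automatically fixed by $\sigma_X\theta_X$. The bookkeeping (involutivity, commutation, compatibility, and the computation showing $\sigma_X\theta_X(B_1,B_2,T)=(\sigma^c(B_1\opp),\sigma^c(B_2\opp),\sigma^c(T))$) is fine. But the one substantive step — the thing you yourself flag as ``the one delicate point'' where you ``expect to spend real effort'' — is left unproved, and it is the entire content of the lemma: why does $\sigma^c(B_i\opp)=B_i$ once $\sigma^c(T)=T$? You say this ``requires tracking how $\sigma_G$ acts on Borels containing a fixed torus,'' but you never do the tracking, so as written the proof is not complete. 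The missing argument is short: $\sigma^c$ is a \emph{compact} real form, so the real torus $(T,\sigma^c)$ is anisotropic; hence $\sigma^c$ acts on $X^*(T)$ by $-1$, so it sends every root of $T$ to its negative and therefore carries each Borel subgroup containing $T$ to its opposite with respect to $T$. That is exactly why the $\opp$ sits in $\sigma_X$ and not in $\theta_X$, and it is the sentence the paper's proof supplies (``Since $(T,\sigma_G\theta_G)$ is anisotropic we have $\sigma_G\theta_G(B_i)=B_i\opp$''). Existence of the $\sigma^c$-stable torus in the $G$-orbit is just the existence of a maximal torus of $(G,\sigma^c)$ defined over $\R$ together with conjugacy of maximal tori; no pinning needs to be ``arranged.''

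Two smaller points. First, your preliminary paragraph arguing that $\sigma_X\theta_X$ preserves each $G$-orbit via the $W$-valued relative position is both unnecessary (once every orbit is shown to contain a $\sigma_X\theta_X$-fixed point, orbit-preservation is automatic) and not correct as stated: the flip $(B_1,B_2)\mapsto(B_2,B_1)$ sends relative position $w$ to $w^{-1}$, and the double-opposite and the action of $\sigma^c$ on $W$ each contribute further twists, so the claim that ``all of which act trivially on $W$-valued relative position'' does not hold factor by factor. Second, the final appeal to Lemma \ref{l:nonempty} is not part of verifying the hypotheses of Proposition \ref{p:X}; that lemma is invoked inside the proof of the Proposition itself, so you can stop once every $G$-orbit is shown to meet $X^{\sigma_X\theta_X}$. (Your observation that the stabilizer of a triple is $T$, hence reductive, is correct but likewise not needed here.)
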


\begin{proof}
The fact that $\sigma_X,\theta_X$ commute, and the facts that 
$(\sigma_G,\sigma_X)$  and $(\theta_G,\theta_X)$ are compatible is
immediate.
Let us check that each $G$-orbit in $X$ contains a
$\sigma_X\theta_X$-fixed point.
Let $(B_1,B_2,T) \in X$.
Since the real reductive group $(G, \sigma_G \theta_G)$ has a maximal torus
defined over $\R$ \cite{borelbook}*{Theorem 18.2}, up to conjugating by an element of $G$ we can assume
that $T$ is $\sigma_G \theta_G$-stable. Since $(T, \sigma_G \theta_G)$ is anisotropic
we have $\sigma_G \theta_G(B_i) = B_i\opp$ for $i \in \{1,2\}$, and $(B_1,B_2,T)$ is
automatically fixed by $\sigma_X \theta_X$.
\end{proof}

Proposition \ref{p:X} now applies to give a bijection
\begin{equation}
\label{e:matsuki}
X/G(\R)\longleftrightarrow X/K.
\end{equation}

\begin{lemma}
Consider the projection $p$ on the first factor, taking $X$ to $\B$.
\begin{enumerate}
\item  $p$ restricted to $X^{\sigma_X}$ is equivariant with respect to
  $G(\R)$ and induces a bijection $X^{\sigma_X}/G(\R)\simeq \B/G(\R)$.
\item  $p$ restricted to $X^{\theta_X}$ is equivariant with respect to
  $K$ and induces a bijection $X^{\theta_X}/K\simeq \B/K$.
\end{enumerate}
\end{lemma}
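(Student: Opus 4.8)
The plan is to treat (1) and (2) simultaneously, writing $\tau_G$ for $\sigma_G$ in case (1) and $\theta_G$ in case (2), and choosing $\tau_X$ and $G^{\tau_G}\in\{G(\R),K\}$ accordingly. The first point is formal: $p$ is $G$-equivariant, and since $(\tau_G,\tau_X)$ are compatible the group $G^{\tau_G}$ preserves $X^{\tau_X}$, so $p$ restricts to a $G^{\tau_G}$-equivariant map $X^{\tau_X}\to\B$ and only bijectivity on orbit sets remains. Next I would unwind the condition ``$(B_1,B_2,T)\in X^{\tau_X}$'': it says precisely that $\tau_G(T)=T$ and that $B_2$ is a prescribed function of $(B_1,T)$ --- namely $\theta_G(B_1)$ in case (2) and the Borel opposite to $\sigma_G(B_1)$ with respect to $T$ in case (1) (one checks $T\subset\tau_G(B_1)$, so this makes sense). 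Hence $(B_1,B_2,T)\mapsto(B_1,T)$ is a $G^{\tau_G}$-equivariant bijection of $X^{\tau_X}$ onto the set of pairs $(B_1,T)$ with $T\subset B_1$ a $\tau_G$-stable Cartan subgroup, intertwining $p$ with the projection $(B_1,T)\mapsto B_1$. So the lemma reduces to: (a) every Borel $B$ contains a $\tau_G$-stable Cartan subgroup; and (b) any two $\tau_G$-stable Cartan subgroups of a fixed $B$ are conjugate by an element of $G^{\tau_G}\cap B$. Statement (a) yields surjectivity and (b) yields injectivity on orbits, once one notes that (after translating by $G^{\tau_G}$ to arrange $B_1=B_1'$) an element of $G^{\tau_G}$ normalizing $B_1$ automatically sends the $B_2$ attached to $(B_1,T)$ to the one attached to $(B_1,T')$, because forming the opposite Borel with respect to a torus is equivariant under conjugation.

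For (a) and (b) I would pass to $H=B\cap\tau_G(B)$. By the Bruhat decomposition recalled above (\cite{borelbook}*{\S 14.12}), $H$ is connected and solvable and contains a maximal torus of $G$, and it is obviously $\tau_G$-stable. If $T$ is a $\tau_G$-stable Cartan subgroup of $G$ with $T\subset B$, then $T=\tau_G(T)\subset\tau_G(B)$, so $T\subset H$ and $T$ is a maximal torus of $H$; conversely every maximal torus of $H$ is a maximal torus of $G$, since those of $H$ are all $H$-conjugate and one of them is maximal in $G$. Thus the $\tau_G$-stable Cartan subgroups of $G$ contained in $B$ are exactly the $\tau_G$-stable maximal tori of $H$, and because $H_u\subset H\subset B$ it is enough to show that $H$ has a $\tau_G$-stable maximal torus and that they form a single orbit under $H_u^{\tau_G}$, where $H_u$ is the ($\tau_G$-stable) unipotent radical of $H$. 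Granting existence of one such torus $T_0$, the single-orbit statement I would obtain from Lemma \ref{l:orbitsinv}: the maximal tori of $H$ form a homogeneous space for $H_u$ acting by conjugation, with stabilizer $C_{H_u}(T_0)$ of $T_0$, and $\ker\bigl(H^1(\tau_G,C_{H_u}(T_0))\to H^1(\tau_G,H_u)\bigr)$ is trivial because $H^1$ of a connected unipotent complex group with respect to an (anti)holomorphic involution vanishes in characteristic zero (d\'evissage along the lower central series reduces to $\mathbb{G}_a$, whose relevant cohomology vanishes since $\mathbb{G}_a(\C)$ is uniquely divisible). Existence of one $\tau_G$-stable maximal torus in $H$ follows from the same vanishing, i.e.\ from the existence of a $\tau_G$-equivariant Levi decomposition of the connected solvable group $H$.

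The step I expect to be the crux is this last one: producing a single $\tau_G$-stable maximal torus in the solvable group $H$, uniformly for holomorphic $\tau_G=\theta_G$ and anti-holomorphic $\tau_G=\sigma_G$. For $\sigma_G$ this is the classical fact that a connected solvable group over $\R$ has a maximal torus defined over $\R$; for $\theta_G$ it is the analogue for an involutive automorphism, which I would prove by the same kind of d\'evissage --- at each stage (a $\tau_G$-stable central subgroup of the nilpotent radical) one must solve a congruence of the form $v\inv\tau_G(v)\equiv w$ modulo a $\tau_G$-stable subgroup of a vector group, which is possible after dividing by $2$ --- or cite the existence of equivariant Levi decompositions (\cite{mostow}). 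Everything else (the equivariance, the reduction to (a) and (b), the single-orbit count via Lemma \ref{l:orbitsinv}) is either formal or a direct application of results already available; in particular the verification that the conjugating element in (b) lies in $B$ and transports the $B_2$-coordinate correctly is routine.
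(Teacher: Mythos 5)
Your proof is correct and follows essentially the same route as the paper's: equivariance is formal, surjectivity comes from producing a $\tau_G$-stable maximal torus in the connected solvable group $B\cap\tau_G(B)$ (whose maximal tori are maximal in $G$ by Bruhat), and injectivity on orbits comes from the simply transitive action of a unipotent group on the set of such tori. The only divergence is in bookkeeping: where you establish existence by a uniform $H^1$-vanishing/d\'evissage argument for unipotent groups and run the conjugacy step through Lemma \ref{l:orbitsinv}, the paper cites the classical existence of a maximal torus defined over $\R$ for $\sigma_G$ and Steinberg 7.6 for $\theta_G$, and notes directly that the unipotent radical of $B$ acts freely and transitively on the maximal tori of $B$ (the stabilizer you call $C_{H_u}(T_0)$ is in fact trivial, since $G$ is reductive), so no cohomology is needed there.
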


\begin{proof}
The fact that $p$ is $G$-equivariant, and $p|_{X^{\sigma_X}}$ is
$G(\R)$-equivariant, are immediate.
Let $B$ be a Borel subgroup of $G$.
Then $B \cap \sigma_G(B)$ is an algebraic subgroup of $G$ defined over $\R$, and
so it contains a maximal torus $T$ which is defined over $\R$.
The Bruhat decomposition implies that $T$ is also a maximal torus of $G$.
This shows that $B \in p(X^{\sigma_X})$.

Moreover the unipotent radical $U$ of $B$ acts transitively on the set of maximal tori of $B$
\cite{borelbook}*{Theorem 10.6}, and since $G$ is reductive this action is also
free.
Therefore $U^{\sigma_G}$ acts simply transitively on the set of
$\sigma_G$-stable maximal tori in $B$.
This implies that $p$ induces a bijection $X^{\sigma_X}/G(\R)\simeq \B/G(\R)$.

The proof of (2) is similar, except for the fact that $B \cap \theta_G(B)$
contains a maximal torus which is $\theta_G$-stable, which follows from
\cite{steinberg}*{7.6} applied to $\theta_G$ acting on $B \cap \theta_G(B)$.
\end{proof}

Together with \eqref{e:matsuki} this proves:
\begin{proposition}
\label{p:matsukiduality}
There is a canonical bijection $\B/G(\R)\leftrightarrow \B/K$.
\end{proposition}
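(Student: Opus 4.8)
\textbf{Proof proposal for Proposition \ref{p:matsukiduality}.}

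The plan is to assemble the two preceding lemmas into a single chain of bijections, with Proposition \ref{p:X} as the bridge. First I would invoke the lemma just proved (that the hypotheses of Proposition \ref{p:X} hold for the triple $(X,\sigma_X,\theta_X)$ defined via the space of pairs of Borels sharing a common Cartan) to obtain, from Proposition \ref{p:X}, bijections
$$ X^{\sigma_X}/G^{\sigma_G} \;\longleftrightarrow\; (X^{\sigma_X}\cap X^{\theta_X})/(G^{\sigma_G}\cap G^{\theta_G}) \;\longleftrightarrow\; X^{\theta_X}/G^{\theta_G}, $$
which is exactly \eqref{e:matsuki}, i.e.\ a canonical bijection $X/G(\R)\leftrightarrow X/K$ (here I am writing $X/G(\R)$ for $X^{\sigma_X}/G^{\sigma_G}$ and $X/K$ for $X^{\theta_X}/G^{\theta_G}$, as the last display before the statement does). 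Then I would compose with the two bijections supplied by the final lemma: the first projection $p\colon X\to\B$ induces $X^{\sigma_X}/G(\R)\simeq\B/G(\R)$ on the $\sigma$-side and $X^{\theta_X}/K\simeq\B/K$ on the $\theta$-side. Stringing these three bijections together yields the desired $\B/G(\R)\leftrightarrow\B/K$, and it is canonical because each constituent bijection is (Proposition \ref{p:X} produces a canonical map, and the projection maps are manifestly natural).

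Concretely, the steps in order are: (1) cite the lemma verifying the hypotheses of Proposition \ref{p:X}; (2) apply Proposition \ref{p:X} to get \eqref{e:matsuki}; (3) cite part (1) of the last lemma to replace the left-hand term $X^{\sigma_X}/G(\R)$ by $\B/G(\R)$; (4) cite part (2) to replace $X^{\theta_X}/K$ by $\B/K$; (5) conclude. Since the excerpt already states ``Together with \eqref{e:matsuki} this proves:'' immediately before the proposition, essentially all the work has been front-loaded into the lemmas, and the proof body is a one-line composition.

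I do not expect a genuine obstacle here; the only thing requiring a little care is bookkeeping of canonicity — making sure the identification in Proposition \ref{p:X} is the one independent of the auxiliary choice of base point $x$, which the proof of that proposition already checks, and that the projection $p$ descends to orbit spaces compatibly on both sides. If one wanted to be more self-contained one could also remark why $X$ is the right auxiliary variety rather than $\B\times\B$: the stabilizer of a point of $\B\times\B$ is an intersection of two Borels, typically non-reductive, so Lemma \ref{l:subcpctrealform} and hence the hypotheses of Proposition \ref{p:X} would fail; adding the shared Cartan $T$ and twisting $\sigma_X$ by ``opposite with respect to $T$'' repairs this and simultaneously forces every $G$-orbit to meet $X^{\sigma_X\theta_X}$, but all of that is handled in the lemmas preceding the statement, so the proof itself need only cite them.
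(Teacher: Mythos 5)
Your proposal is correct and matches the paper's argument exactly: the paper states ``Together with \eqref{e:matsuki} this proves:'' immediately before the proposition, so the proof is precisely the composition of the bijection from Proposition \ref{p:X} (justified by the lemma verifying its hypotheses) with the two projection bijections $X^{\sigma_X}/G(\R)\simeq\B/G(\R)$ and $X^{\theta_X}/K\simeq\B/K$ from the final lemma. Your additional remarks on canonicity and on why $\B\times\B$ alone would not work are accurate and consistent with the paper's own discussion.
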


\subsec{Weyl groups and conjugacy of Cartan subgroups}
\label{s:cartans}

We next give short proofs of two well known facts about Weyl groups and
conjugacy of Cartan subgroups.

Let $X$ be the set of Cartan subgroups of $G$. This is a homogeneous
space for the conjugation action of $G$, with 
$\sigma_X,\theta_X$ coming from $\sigma$ and $\theta$. 
It is well known that $G$ has a $\sigma$-stable Cartan
subgroup, that is $X^{\sigma_X} \neq \emptyset$.
This also applies to $G$ equipped with its real form $\sigma \theta$, so that $X^{\sigma_X\theta_X} \neq \emptyset$.

Matsuki's result on Cartan subgroups (\cite{matsuki},\cite{av1}*{Proposition 6.18}) now follows from Proposition \ref{p:X}.

\begin{proposition}
\label{p:matsuki}
There are canonical bijections between
\begin{itemize}
\item $G(\R)$-conjugacy classes of $\sigma$-stable 
Cartan subgroups of $G$,
\item $K(\R)$-conjugacy classes of $\sigma$- and $\theta$-stable Cartan subgroups of $G$,
\item $K$-conjugacy classes of $\theta$-stable Cartan subgroups of $G$.
\end{itemize}
\end{proposition}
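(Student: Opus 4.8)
The plan is to apply Proposition \ref{p:X} to the homogeneous space $X$ of all Cartan subgroups of $G$, equipped with the conjugation action and the involutions $\sigma_X,\theta_X$ induced by $\sigma$ and $\theta$. The three sets in the statement are, respectively, $X^{\sigma_X}/G(\R)$, $(X^{\sigma_X}\cap X^{\theta_X})/(G^\sigma\cap G^\theta)$, and $X^{\theta_X}/K$, so once the hypotheses of Proposition \ref{p:X} are verified the two natural projection maps give the desired canonical bijections, with $K(\R)=G^\sigma\cap G^\theta$ playing the role of the intermediate group.

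The first thing I would check is that $(\sigma_G,\sigma_X)$ and $(\theta_G,\theta_X)$ are compatible in the sense of \eqref{e:compatible} and that $\sigma_X,\theta_X$ commute: this is immediate from the definitions, since conjugating a Cartan subgroup and then applying $\sigma$ (or $\theta$) is the same as applying $\sigma$ (or $\theta$) first and then conjugating by the image. Next I would verify the genuine hypothesis of Proposition \ref{p:X}: that for every $x\in X^{\sigma_X}\cup X^{\theta_X}$ the $G$-orbit of $x$ meets $X^{\sigma_X\theta_X}$. Since $X$ is a single $G$-orbit, it suffices to check $X^{\sigma_X\theta_X}\ne\emptyset$, i.e.\ that the compact real form $\sigma^c=\sigma\theta$ stabilizes some Cartan subgroup of $G$; this is classical, and follows for instance from the fact that a compact connected Lie group has a maximal torus (applied to $K=G^{\sigma^c}$, whose maximal tori complexify to $\sigma^c$-stable Cartan subgroups of $G$), together with Lemma \ref{l:nonempty} to upgrade nonemptiness of $X^{\sigma_X\theta_X}$ to nonemptiness of each of $X^{\sigma_X}$, $X^{\theta_X}$ and their intersection. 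The standard fact that $G$ has a $\sigma$-stable Cartan subgroup (maximal torus defined over $\R$, \cite{borelbook}*{Theorem 18.2}) can also be invoked directly for $X^{\sigma_X}\ne\emptyset$, and the same applied to the real form $\sigma\theta$ gives $X^{\sigma_X\theta_X}\ne\emptyset$.

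Granting these verifications, Proposition \ref{p:X} yields that
$$
X^{\sigma_X}/G^\sigma \;\longleftarrow\; (X^{\sigma_X}\cap X^{\theta_X})/(G^\sigma\cap G^\theta) \;\longrightarrow\; X^{\theta_X}/G^\theta
$$
are both bijective, which translates verbatim into the three-way correspondence of the Proposition once one unwinds the identifications $X^{\sigma_X}/G^\sigma=\{G(\R)\text{-classes of }\sigma\text{-stable Cartans}\}$ and so on. I do not expect any serious obstacle here; the only point requiring a small argument is the nonemptiness hypothesis, and that is handled cleanly by Lemma \ref{l:nonempty} together with the existence of a $\sigma\theta$-stable Cartan subgroup. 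The proof is therefore essentially a one-line citation of Proposition \ref{p:X} preceded by this nonemptiness check, exactly parallel to the treatment of Matsuki duality for Borel subgroups in Proposition \ref{p:matsukiduality}.
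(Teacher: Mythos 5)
Your proposal is correct and follows essentially the same route as the paper: the paper likewise applies Proposition \ref{p:X} to the homogeneous space of Cartan subgroups, noting that $X^{\sigma_X}\ne\emptyset$ and $X^{\sigma_X\theta_X}\ne\emptyset$ because $G$ has a Cartan subgroup stable under $\sigma$ (and, applying the same fact to the real form $\sigma\theta$, one stable under $\sigma\theta$). Your extra verifications of compatibility and the use of Lemma \ref{l:nonempty} are exactly the implicit steps the paper leaves to the reader.
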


In particular we recover the fact that $G$ admits a $\theta$-stable Cartan subgroup $H$ in every $G(\R)$-conjugacy class of $\sigma$-stable Cartan subgroups.

Next, we  recover the following description of the \emph{real} or \emph{rational} Weyl group of $H$.
See also \cite{warner_I}*{Proposition 1.4.2.1}, \cite{vogan_green}*{Definition 0.2.6}.

\begin{proposition}
\label{p:W}
Let $H$ be a Cartan subgroup of $G$ which is both $\sigma$ and
$\theta$ stable.
Then the two natural morphisms
$$
\Norm_{G(\R)}(H(\R))/H(\R) \leftarrow \Norm_{K(\R)}(H(\R))/H(\R)^{\theta} \rightarrow \Norm_{K}(H)/(H\cap K)
$$
are isomorphisms.
\end{proposition}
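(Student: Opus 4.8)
The plan is to deduce this from Proposition \ref{p:X}, applied not to $G$ but to the normalizer $N:=\Norm_G(H)$. Since $G$ is connected, $H$ is a maximal torus, so the identity component of $N$ is $H$; in particular $N$ has trivial unipotent radical and is reductive, and $W:=N/H$ is the Weyl group. The subgroups $H$ and $N$ are stable under $\sigma$, $\theta$ and $\sigma^c:=\sigma\theta$, so Lemma \ref{l:subcpctrealform} shows that $\sigma^c$ restricts to a compact real form of each of $H$ and $N$; hence $\sigma|_N$ is a real form of $N$ with Cartan involution $\theta|_N$ (and similarly for $H$).

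Next I would take $X$ to be the finite ($0$-dimensional) homogeneous $N$-variety $W=N/H$, with left translation action, $\sigma_X:=\sigma|_W$ and $\theta_X:=\theta|_W$. All hypotheses of Proposition \ref{p:X} hold: holomorphy conditions are vacuous on a finite set, $\sigma_X$ and $\theta_X$ commute and are compatible with the $N$-action, $\theta|_N$ is a Cartan involution for $\sigma|_N$, and the base point $\overline e\in W$ is fixed by $\sigma_X\theta_X$, so every $N$-orbit (there is only one) meets $X^{\sigma_X\theta_X}$. By Lemma \ref{l:orbitsinv}, each of $W^{\sigma_X}/N^\sigma$, $W^{\theta_X}/N^\theta$, $(W^{\sigma_X}\cap W^{\theta_X})/(N^\sigma\cap N^\theta)$ is the corresponding orbit space, and Proposition \ref{p:X} gives natural bijections
$$ W^{\sigma_X}/N^\sigma \longleftarrow (W^{\sigma_X}\cap W^{\theta_X})/(N^\sigma\cap N^\theta) \longrightarrow W^{\theta_X}/N^\theta. $$

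The key input is that $\sigma^c$ acts trivially on $W$, so that $\sigma_X=\theta_X$ on $W$. Indeed, $\sigma^c|_H$ being a compact real form of the torus $H$ forces $\sigma^c$ to act by $-1$ on the character lattice $X^*(H)$; since $-1$ is central in $\Aut(X^*(H))$ and $W\subset\Aut(X^*(H))$ faithfully, the conjugation action of $\sigma^c$ on $W$ is trivial, and then $\theta|_W=(\sigma\sigma^c)|_W=\sigma|_W$. Consequently $W^{\sigma_X}=W^{\theta_X}=W^{\sigma_X}\cap W^{\theta_X}=:W_0$, and the two maps above become the natural refinement maps from $W_0/(N^\sigma\cap N^\theta)$ onto $W_0/N^\sigma$ and onto $W_0/N^\theta$ (same representative, larger acting group). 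A refinement map of orbit spaces that happens to be a bijection forces the orbit of each point under the smaller group to equal its orbit under the larger group; applied to $\overline e\in W_0$ this says exactly that the images of $N^\sigma\cap N^\theta$, of $N^\sigma$, and of $N^\theta$ in $W$ all coincide.

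Finally I would translate this back. Because $H(\R)=H^\sigma$ is Zariski dense in the torus $H$, an element of $G^\sigma$ normalizes $H(\R)$ iff it normalizes $H$, so $\Norm_{G(\R)}(H(\R))=N^\sigma$, and $H(\R)=N^\sigma\cap H$ is the kernel of $N^\sigma\to W$; thus $\Norm_{G(\R)}(H(\R))/H(\R)$ is the image of $N^\sigma$ in $W$. Likewise $\Norm_K(H)/(H\cap K)=N^\theta/H^\theta$ is the image of $N^\theta$, and, using $K(\R)=G^\sigma\cap G^\theta$ together with the same density argument, $\Norm_{K(\R)}(H(\R))/H(\R)^\theta=(N^\sigma\cap N^\theta)/(H^\sigma\cap H^\theta)$ is the image of $N^\sigma\cap N^\theta$. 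Under these identifications the two homomorphisms in the Proposition are the inclusions of subgroups of $W$, which are isomorphisms by the previous paragraph. I expect the main obstacle to be precisely this last matching: Proposition \ref{p:X} outputs a bijection of orbit \emph{spaces}, whereas the statement is phrased via the ``dual'' Weyl-group quotients, and it is exactly the coincidence $\sigma|_W=\theta|_W$ (equivalently, the triviality of $\sigma^c$ on $W$) that lets one pass from one to the other.
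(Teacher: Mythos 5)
Your proof is correct and follows essentially the same route as the paper: both work with $N=\Norm_G(H)$ and $W=N/H$, show that $\sigma\theta$ acts trivially on $W$, identify the three quotients with the images of $N^\sigma$, $N^\sigma\cap N^\theta$ and $N^\theta$ in $W$, and then invoke the $\sigma$--$\theta$ comparison machinery to see that these three images coincide (you via Proposition \ref{p:X} applied to $W$ as a homogeneous $N$-space, the paper via the equivalent fibrewise statement that for an $H$-torsor $\pi^{-1}(w)$ containing a $\sigma\theta$-fixed point the existence of $\sigma$-, $\theta$-, and jointly-fixed points are equivalent). The only cosmetic difference is your argument that $\sigma\theta$ is trivial on $W$, using that $-1$ is central in $\Aut(X^*(H))$, where the paper instead notes that $N^{\sigma\theta}$ meets every component of $N$; both are fine.
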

\begin{proof}
Naturally $\sigma$ and $\theta$ act on $N=\Norm_G(H)$ and on the Weyl group $W=N/H$.
Note that the three quotients in the Proposition are $N^{\sigma}/H^{\sigma}$, (resp.\ $(N^{\sigma} \cap N^{\theta}) / (H^{\sigma} \cap H^{\theta})$, $N^{\theta}/H^{\theta}$), and thus are naturally subgroups of $W^{\sigma}$ (resp.\ $W^{\sigma} \cap W^{\theta}$, $W^{\theta}$).
Denote by $\pi$ the canonical surjective morphism $N \rightarrow W$.
By Lemma \ref{l:subcpctrealform}, $N^{\sigma \theta}$ meets every connected component of $N$.
Since $H=N^0$, this means that $N^{\sigma \theta}$ maps surjectively to $W$.
In particular, $\sigma \theta$ acts trivially on $W$, and so $W^{\sigma} = W^{\theta}$.

For $w \in W^{\sigma}$ let $N_w=\pi^{-1}(\{w\})$. This is a torsor under $H$ that contains a $\sigma \theta$-invariant point.
By Corollary \ref{c:main} the following conditions are equivalent: 
$(N_w)^{\sigma}\ne\emptyset$,$(N_w)^{\theta}\ne\emptyset$, and $(N_w)^{\sigma\theta}\ne\emptyset$.
\end{proof}

\medskip

\sec{Relation with Cohomology of Cartan subgroups}
\label{s:torus}

We continue to assume $G$ is a connected complex reductive group. 
Suppose $\sigma$ is a real form of $G$, 
and $\theta$ is a Cartan involution for $\sigma$. 

We say a $\sigma$-stable Cartan subgroup $H_f$ of $G$ is fundamental
if $H_f(\R)$ is of minimal split rank.  Borovoi computes
$H^1(\sigma,G)$ in terms of $H^1(\sigma,H_f)$ as follows.
Before stating his result we make a few remarks about Weyl groups.

\begin{lemma}
\label{l:H1N}
Suppose $H$ is a $\sigma$-stable Cartan subgroup.
There is an action of $W^\sigma$ on $H^1(\sigma,H)$ defined as
follows. 
Suppose $w\in W^\sigma$ and  $h\in H^{-\sigma}$. 
Choose $n\in N$ mapping to $w$.
Then the action of $w$ on $H^1(\sigma,H)$ is 
$w:\cl(h)\rightarrow \cl(nh\sigma(n\inv))$; this is well defined, independent of the choices involved.

The image of $H^1(\sigma,H)$ in $H^1(\sigma,N)$ is isomorphic to  $H^1(\sigma,H)/W^\sigma$.
\end{lemma}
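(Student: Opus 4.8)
The plan is to break the statement into two independent pieces: first that the recipe $w : \cl(h) \mapsto \cl(n h \sigma(n\inv))$ is a well-defined action of $W^\sigma$ on $H^1(\sigma, H)$, and second that the image of $H^1(\sigma, H)$ in $H^1(\sigma, N)$ is precisely the quotient $H^1(\sigma, H)/W^\sigma$ by that action. For the first piece I would check independence of all choices. Independence of the representative $h \in H^{-\sigma}$ of a class $\cl(h)$: if $h' = t h \sigma(t\inv)$ with $t \in H$, then $n h' \sigma(n\inv) = (ntn\inv)(nh\sigma(n\inv))\sigma(ntn\inv)\inv$, and $ntn\inv \in H$ since $H$ is normal in $N$ and abelian-invariant under conjugation by $n$, so the class is unchanged. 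Independence of the lift $n$ of $w$: two lifts differ by $H$, and replacing $n$ by $n h_0$ ($h_0 \in H$) changes $n h \sigma(n\inv)$ by the coboundary relation since $\sigma(w) = w$ forces the correction term into the coboundary. Compatibility with the group law $W^\sigma \times W^\sigma \to W^\sigma$ follows because one may choose lifts compatibly modulo $H$ and the cocycle identity absorbs the discrepancy; here one uses that $W^\sigma$ acts on $H^{-\sigma}$ (which needs $\sigma(w) = w$ so that $w$ genuinely sends $\sigma$-twisted cocycles to $\sigma$-twisted cocycles). These are all routine cocycle manipulations.

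For the second piece, consider the map $j : H^1(\sigma, H) \to H^1(\sigma, N)$ induced by inclusion. The claim is that the fibers of $j$ over its image are exactly the $W^\sigma$-orbits. I would argue as follows. Given $h, h' \in H^{-\sigma}$, they map to the same class in $H^1(\sigma, N)$ iff there is $x \in N$ with $h' = x h \sigma(x\inv)$. Write the image $\bar x = w \in W = N/H$; applying the projection $N \to W$ to the equation $h' = x h \sigma(x\inv)$ and using that $H$ maps to $1$ gives $1 = w \cdot 1 \cdot \sigma(w)\inv$, i.e. $\sigma(w) = w$, so $w \in W^\sigma$. Writing $x = n t$ with $n$ a lift of $w$ and $t \in H$, the equation becomes $h' = n(t h \sigma(t\inv))\sigma(n\inv)$, which says exactly that $\cl(h') = w \cdot \cl(h)$ in the action defined in the first part. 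Conversely any $w \in W^\sigma$ lifts to $n \in N^{?}$... — more precisely, any $w \in W^\sigma$ has some lift $n \in N$, and then $n$ itself need not be $\sigma$-fixed, but the computation above is reversible and shows $\cl(h)$ and $w\cdot\cl(h)$ have the same image in $H^1(\sigma,N)$. Hence the fibers of $j$ are the $W^\sigma$-orbits, giving the bijection $\mathrm{image}(j) \simeq H^1(\sigma,H)/W^\sigma$.

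The main obstacle I anticipate is the bookkeeping around lifts of Weyl group elements and the role of the hypothesis $w \in W^\sigma$: one must be careful that a lift $n$ of $w \in W^\sigma$ need not lie in $N^\sigma$, yet the relevant expressions $n h \sigma(n\inv)$ still land in $H^{-\sigma}$ and the ambiguity $n \rightsquigarrow nh_0$ is absorbed into the coboundary relation. Concretely, to see $n h \sigma(n\inv) \in H^{-\sigma}$ one computes $(n h \sigma(n\inv)) \sigma(n h \sigma(n\inv)) = n h \sigma(n\inv)\sigma(n)\sigma(h) n\inv = n (h\sigma(h)) n\inv = 1$ using $h \in H^{-\sigma}$ — so this step is in fact automatic and does not even need $w \in W^\sigma$; where $w \in W^\sigma$ is genuinely used is in the surjectivity-of-projection argument and in checking that the action is by $W^\sigma$ rather than some coset space. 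Once this point is handled cleanly, the rest is a direct unwinding of the definition of $H^1$ for the pair $H \subset N$, and no deeper input (not even Theorem \ref{t:main}) is required.
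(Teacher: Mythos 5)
Your argument is correct. The paper offers no proof of this lemma at all — it declares the statement immediate and cites \cite{serre_galois}*{I.5.5, Corollary 1} — and your two-step verification (well-definedness of the twisted-conjugation action of $W^\sigma$, then identification of the fibers of $H^1(\sigma,H)\rightarrow H^1(\sigma,N)$ with the $W^\sigma$-orbits by factoring $x=nt$ with $t\in H$) is exactly the standard proof of that cited result, so you have supplied what the paper leaves to the reader. One point to tighten: your closing computation shows only that $x=nh\sigma(n\inv)$ satisfies $x\sigma(x)=1$, i.e.\ that it is a twisted cocycle valued in $N$; the hypothesis $w\in W^\sigma$ is what forces $x$ to lie in $H$ in the first place, since the image of $x$ in $W=N/H$ is $w\sigma(w)\inv$. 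So it is misleading to call that step ``automatic'' — without $\sigma(w)=w$ the recipe does not define an element of $Z^1(\sigma,H)$ at all, and this, rather than the surjectivity bookkeeping, is the essential use of the hypothesis in the first half of the lemma. (Relatedly, independence of the lift $n$ needs no appeal to $\sigma(w)=w$: replacing $n$ by $nh_0$ replaces $h$ by the cohomologous cocycle $h_0h\sigma(h_0\inv)$, and your first computation finishes it.)
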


This is immediate. See \cite{serre_galois}*{I.5.5, Corollary 1}.

Suppose a Cartan subgroup  $H$  is $\sigma$-stable. Then $\sigma$ acts on the roots of $H$
in $G$. We say a root $\alpha$ of $H$ in $G$ is imaginary, real, or complex if 
$\sigma(\alpha)=-\alpha$, $\sigma(\alpha)=\alpha$, or
$\sigma(\alpha)\ne\pm\alpha$, respectively. 
The set of imaginary roots is a root system.
Let $W_i$ denote its Weyl group.

\begin{lemma}
\label{l:Wi}
$H^1(\sigma,H)/W^\sigma= H^1(\sigma,H)/W_i$.
\end{lemma}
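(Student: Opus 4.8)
The plan is the following. Since $G$ is connected, $H$ is a torus and $H^1(\sigma,H)=H^{-\sigma}/(1-\sigma)H$, where $(1-\sigma)H=\{t\,\sigma(t)\inv : t\in H\}$, and the $W^\sigma$-action of Lemma \ref{l:H1N} sends $\cl(h)$ to $\cl\big(n\,h\,\sigma(n)\inv\big)$ for any $n\in N$ lifting $w$. First observe $W_i\subseteq W^\sigma$: if $\alpha$ is imaginary then $\sigma s_\alpha\sigma\inv=s_{\sigma(\alpha)}=s_{-\alpha}=s_\alpha$, and $W^\sigma$ normalizes $W_i$ because it permutes the imaginary roots (if $\sigma(\alpha)=-\alpha$ and $w\in W^\sigma$, then $\sigma(w\alpha)=w\,\sigma(\alpha)=-w\alpha$). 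Hence the natural surjection $H^1(\sigma,H)/W_i\twoheadrightarrow H^1(\sigma,H)/W^\sigma$ is automatic, and the lemma is equivalent to its injectivity: every $W^\sigma$-orbit on $H^1(\sigma,H)$ is a single $W_i$-orbit.

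The key input is the classical description of $W^\sigma$ (see \cite{warner_I}*{\S1.4.2}): it is generated by $W_i$, by the reflections $s_\alpha$ in the real roots $\alpha$, and by the elements $s_\beta s_{\sigma(\beta)}$ attached to the $\sigma$-orbits $\{\beta,\sigma(\beta)\}$ of complex roots spanning a subsystem of type $A_1\times A_1$ (the $\sigma$-orbits spanning an $A_2$ or a $B_2$ contribute only real or imaginary reflections, so can be discarded). For the generators of the last two kinds one can choose the relevant root vectors in a $\sigma$-compatible way so that the corresponding Weyl representative lies in $N^\sigma$; then the action of such a generator $w$ on $H^1(\sigma,H)$ is simply $\cl(h)\mapsto\cl(w(h))$, where $w(h)$ denotes the Weyl action of $w$ on $H$. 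So it suffices to prove that these generators act as the \emph{identity} on $H^1(\sigma,H)$, i.e. that $s_\alpha(h)h\inv\in(1-\sigma)H$ for $\alpha$ real, and $(s_\beta s_{\sigma(\beta)})(h)h\inv\in(1-\sigma)H$ for such complex $\beta$. Granting this, reading a word in the generators of $W^\sigma$ from the right, every non-$W_i$ letter acts trivially on the class it meets, so the word acts on $\cl(h)$ exactly as the product of its $W_i$-letters does, and $W^\sigma\cdot\cl(h)=W_i\cdot\cl(h)$.

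The two remaining points are short computations with coroots, once the conventions relating $\sigma$, characters and coroots are fixed: for a real root $\sigma(\alpha^\vee(z))=\alpha^\vee(\bar z)$, for a complex root $\sigma(\beta^\vee(z))=\sigma(\beta)^\vee(\bar z)$, and for $h\in H^{-\sigma}$ one has $\chi(h)=\overline{(\sigma\chi)(h)}\inv$ for every character $\chi$. For $\alpha$ real, $s_\alpha(h)h\inv=\alpha^\vee(\alpha(h))\inv$; from $\overline{\alpha(h)}=\alpha(h)\inv$ we get $|\alpha(h)|=1$, so $\alpha(h)=\bar w/w$ for some $w\in\C^\times$, and then $\alpha^\vee(\alpha(h))\inv=\alpha^\vee(w)\inv\,\sigma(\alpha^\vee(w))\in(1-\sigma)H$. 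For $\beta$ complex spanning an $A_1\times A_1$ with $\sigma(\beta)$, the reflections $s_\beta$ and $s_{\sigma(\beta)}$ commute and $(\sigma(\beta))(s_\beta(h))=(\sigma(\beta))(h)$, so $(s_\beta s_{\sigma(\beta)})(h)h\inv=\beta^\vee(\beta(h))\inv\,\sigma(\beta)^\vee\big((\sigma(\beta))(h)\big)\inv$, which by the displayed identities equals $u\,\sigma(u)\inv$ with $u=\beta^\vee(\beta(h))\inv\in H$, hence lies in $(1-\sigma)H$. This finishes the argument.

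The main obstacle is the first ingredient: extracting the generating set of $W^\sigma$ in a usable form, in particular handling the $A_1\times A_1$ versus $A_2$, $B_2$ cases and checking the existence of $\sigma$-fixed Weyl representatives for the relevant generators (alternatively one works with an arbitrary representative $n$ and absorbs the discrepancy $n\inv\sigma(n)\in H^{-\sigma}$, using that it already lies in the appropriate $(1-\sigma)H$-coset). The coroot computations themselves are routine. One could instead try to transport the statement to the $\theta$-side via Corollary \ref{c:main}, using $W^\sigma=W^\theta$ and the fact that the $\sigma$-imaginary roots are exactly the $\theta$-fixed roots, but this reformulation does not seem to avoid the structural input.
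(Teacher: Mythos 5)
Your proof is correct and follows essentially the same route as the paper: the paper's proof invokes the decomposition $W^\sigma=(W_C)^\sigma\ltimes[W_i\times W_r]$ (citing \cite{ic4}) and then quotes \cite{algorithms} for the fact that $W_r$ and $(W_C)^\sigma$ act trivially on $H^1(\sigma,H)$, which are exactly the two triviality statements you verify by hand via the coroot computations for real reflections and for the orthogonal products $s_\beta s_{\sigma(\beta)}$. The only difference is that you supply the routine calculations the paper delegates to references.
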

\begin{proof}
Write $W^\sigma=(W_C)^\sigma\ltimes[W_i\times W_r]$   
as in \cite{ic4}*{Proposition 4.16}. Here $W_r$ is Weyl group of the real
roots, and 
$(W_C)^\sigma$ is a certain Weyl group, generated by terms of the form
$s_\alpha s_{\sigma\alpha}$ where $\alpha,\sigma\alpha$ are
orthogonal. 
It is easy to see that $W_r$ acts trivially on $H^1(\sigma,H)$, 
and $(W_C)^\sigma$ does as well \cite{algorithms}*{Proposition 12.16}.
\end{proof}

\begin{proposition}[Borovoi \cite{borovoi}]
Suppose $H_f$ is a fundamental $\sigma$-stable Cartan subgroup. 
The natural map 
$H^1(\sigma,H_f)\rightarrow H^1(\sigma,G)$ induces an isomorphism 
$H^1(\sigma,H_f)/W_i\simeq H^1(\sigma,G)$.  
\end{proposition}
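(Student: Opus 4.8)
The plan is to transport everything from the Galois side to the Cartan side using Theorem \ref{t:main}, where the statement becomes a purely algebraic fact about the compact group $K = G^{\sigma^c}$, and then prove it there. Concretely, let $\theta$ be the Cartan involution attached to $\sigma$ and $\sigma^c = \sigma\theta$ the compact real form, with $K = G^{\sigma^c}$. By Theorem \ref{t:Cartaninv} we may arrange that the fundamental Cartan $H_f$ is both $\sigma$- and $\theta$-stable, so that $T := H_f \cap K = H_f^{\sigma^c}$ is a maximal torus of the compact group $K$, and $\theta$ preserves $T$. Then Corollary \ref{c:main} gives a commutative square identifying $H^1(\sigma, H_f) \simeq H^1(\theta, H_f)$ and $H^1(\sigma, G) \simeq H^1(\theta, G)$ compatibly with the natural maps, and moreover the isomorphism $H^1(\sigma, H_f)\simeq H^1(\theta, H_f)$ is $W^\sigma = W^\theta$-equivariant (here I would check that the $W$-actions of Lemma \ref{l:H1N} match up under Corollary \ref{c:main}, which comes down to the fact that $N$ can be chosen $\sigma^c$-stable and that $\sigma,\theta$ agree on $N^{\sigma^c}$). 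So it suffices to prove $H^1(\theta, H_f)/W_i \xrightarrow{\sim} H^1(\theta, G)$.

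Next I would reduce the $\theta$-statement to a statement about $K$. Since $\theta$ preserves $K$ and $H_f$ and $\theta\sigma^c = \sigma$, and $K$ is a maximal compact with $H_f = T\exp(\mathfrak p \cap \h_f)$, I can invoke Corollary \ref{c:taumu} (with $\tau = \theta$, applied both to $G$ and to the subgroup $H_f$, whose own Cartan decomposition is $T\exp(\mathfrak p \cap \h_f)$) to get a commutative diagram
\[
\begin{array}{ccc}
H^1(\theta, T) & \longrightarrow & H^1(\theta, G) \\
\downarrow \simeq & & \downarrow \simeq \\
H^1(\theta|_T, T) & \longrightarrow & H^1(\theta|_K, K)
\end{array}
\]
wait --- more precisely $H^1(\theta, H_f)\simeq H^1(\theta|_T, T)$ and $H^1(\theta, G)\simeq H^1(\theta|_K, K)$, again $W$-equivariantly. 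Since $T$ is a fundamental (indeed, any) maximal torus of the \emph{compact} group $K$, the claim is reduced to: for a compact connected group $K$ with maximal torus $T$ and an involution $\theta$ preserving $T$, the map $H^1(\theta, T)/W_i \to H^1(\theta, K)$ is a bijection, where $W_i$ is the Weyl group of the $\theta$-imaginary roots. Note that for the \emph{compact} form, "fundamental" is automatic: every maximal torus of $K$ is conjugate, and there are no $\theta$-split roots since $\theta|_T$ composed with $\sigma^c|_T$ is $\sigma|_T$ and $T$ is a maximally compact (fundamental) torus.

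For the final reduced statement I would run the argument Borovoi runs for the Galois case, which is now cleaner because $K$ is compact. Surjectivity: given $k \in K^{-\theta}$, one conjugates $k$ into a $\theta$-stable maximal torus --- using that $k\theta(k) = 1$ means $\langle k\rangle$ generates a $\theta$-stable abelian (hence toral, after closure and passing to a maximal torus of its centralizer) subgroup, together with Theorem \ref{t:conjcpctsubgps} / Steinberg's theorem \cite{steinberg}*{7.6} to find a $\theta$-stable maximal torus through it; all maximal tori of $K$ being $K$-conjugate, one lands in $T$. Injectivity: if $t, t' \in T^{-\theta}$ are $K$-conjugate, $t' = ktk^{-1}\theta(k^{-1})$ wait, $t' = k t \theta(k)^{-1}$, one shows $k$ can be taken in $N_K(T)$, so $t' = nt\theta(n)^{-1}$ with $n$ mapping to $w \in W^\theta$; then Lemma \ref{l:Wi} (which reduces $W^\theta$-orbits to $W_i$-orbits) finishes. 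The main obstacle is the $W$-equivariant bookkeeping: making sure the $W$-actions on $H^1(\sigma,H_f)$, on $H^1(\theta,H_f)$, and on $H^1(\theta|_T,T)$ are identified by the isomorphisms of Corollaries \ref{c:main} and \ref{c:taumu}, so that quotienting by $W_i$ is preserved throughout --- and, on the $K$-side, the conjugation-into-a-torus step for injectivity, i.e. showing the conjugating element can be chosen to normalize $T$, which is where compactness of $K$ (and uniqueness in the Cartan/maximal-torus conjugacy theorems) is used essentially.
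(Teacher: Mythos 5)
The paper does not actually prove this Proposition: it is quoted from \cite{borovoi}, and the only content the authors add is Lemma \ref{l:Wi}, which lets them replace the group $W^\sigma$ in Borovoi's original statement by $W_i$. So you are supplying an argument the paper deliberately outsources. Your overall strategy --- transporting everything through Corollary \ref{c:main} to the Cartan side and then to the compact group $K=G^{\sigma^c}$ and its maximal torus $T=H_f^{\sigma^c}$, checking $W$-equivariance by choosing Weyl representatives in $N^{\sigma^c}$, on which $\sigma$ and $\theta$ agree --- is sound and very much in the spirit of the paper (compare the proof of Proposition \ref{p:W}). The reduction to the statement that $H^1(\theta,T)/W_i\rightarrow H^1(\theta,K)$ is bijective for the compact group $K$ is correct.

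The problem is that this reduced statement is essentially all of Borovoi's theorem, and your sketch of it has genuine gaps at both crucial steps. For surjectivity: after placing $k\in Z^1(\theta,K)$ inside a $\theta$-stable maximal torus $S$ with $\theta(k)=k^{-1}$ (which is fine), the step ``all maximal tori of $K$ being $K$-conjugate, one lands in $T$'' does not work. Conjugation by $g\in K$ sends the cocycle $k$ to $gk\theta(g)^{-1}$, not to $gkg^{-1}$, and this lies in $T$ only if $g$ intertwines $(S,\theta|_S)$ with $(T,\theta|_T)$; by Proposition \ref{p:matsuki} the $\theta$-stable maximal tori of $K$ fall into several $K^\theta$-conjugacy classes (one per conjugacy class of Cartan subgroups of $G(\R)$), so such a $g$ need not exist. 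Relatedly, your parenthetical claim that ``fundamental is automatic'' for $T\subset K$ is not right: the fundamentality of $H_f$ is exactly the assertion that $(T,\theta|_T)$ has no roots with $\theta(\alpha)=-\alpha$, it is a hypothesis on the pair and not a consequence of $T$ being a maximal torus of $K$, and it is never used in your surjectivity step even though it is essential there. Concretely, for $G(\R)=SU(2,1)$ and $H$ the maximally split Cartan one has $H(\R)\simeq\C^\times$, hence $H^1(\sigma,H)=1$ and $W_i=1$, while $|H^1(\sigma,G)|=2$: surjectivity fails for a non-fundamental Cartan. The correct argument must arrange $S$ to be fundamental for the twisted involution $\int(k)\circ\theta$ and then prove that such pairs are conjugate compatibly with the cocycle; that is the heart of Borovoi's proof and is missing. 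The same issue recurs in your injectivity step, where ``one shows $k$ can be taken in $N_K(T)$'' is asserted rather than proved. As written the proposal is a correct reduction to the compact case together with an honest list of the hard points, but not a proof of them.
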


The Theorem in \cite{borovoi} is stated in terms of $W^\sigma$, so we
have used the preceding Lemma to replace this with $W_i$.

\begin{remarkplain}
Borovoi has pointed out that we can  replace $W_i$ with
another group, which is much smaller in the unequal rank case.
Fix a pinning $(H_f,B,\{X_\alpha\})$. The inner class of $\sigma$
corresponds to an involution $\delta\in\Aut(G)$  which preserves the pinning
(see Section \ref{s:real}). Thus $\delta$ defines an involution of the
simple roots, which is trivial if and only if the derived group is
equal rank. 

Let $W_0$ be the Weyl group generated by the $\delta$-fixed simple
roots. For example in type $A_n$, if $\delta$ is nontrivial, then 
$W_0$ is trivial if $n$ is even, or $\Ztwo$ if $n$ is odd. 
Borovoi proves that $H^1(\sigma,H_f)/W_i\simeq H^1(\sigma,H_f)/W_0$. 
\end{remarkplain}

\begin{proposition}
\label{p:borovoi}
There is a canonical isomorphism
$\phi:H^1(\theta,G)\simeq H^1(\theta,H_f)/W_i$ making the following
diagram commute:
$$
\xymatrix{
H^1(\sigma,G)\ar@{->}^{\simeq}[r]\ar@{->}[d]_{\simeq}&H^1(\sigma,H_f)/W_i\ar@{->}[d]^{\simeq}\\
H^1(\theta,G)\ar@{->}[r]^{\phi}_{\simeq}&H^1(\theta,H_f)/W_i
}
$$
The top isomorphism is Borovoi's result and  the two vertical arrows
are from Theorem \ref{t:main} applied to $G$ and $H$, respectively. 
\end{proposition}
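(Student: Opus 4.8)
The plan is to deduce Proposition \ref{p:borovoi} directly from the already-established results, without redoing Borovoi's argument. The point is that the horizontal isomorphism $H^1(\theta,H_f)/W_i\simeq H^1(\theta,G)$ does not need an independent proof: once we know the square can be filled in compatibly, we get the horizontal map for free from the other three sides.

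First I would observe that Theorem \ref{t:main} (via Corollary \ref{c:main}) applies both to $G$ and to the reductive subgroup $H_f$: it gives canonical isomorphisms $H^1(\sigma,G)\simeq H^1(\theta,G)$ and $H^1(\sigma,H_f)\simeq H^1(\theta,H_f)$. Here I must check that $\theta$ may be taken to stabilize $H_f$ — this is exactly part (1)(b) of Theorem \ref{t:Cartaninv} (or one simply notes that a fundamental Cartan can be chosen $\sigma$- and $\theta$-stable, as in Section \ref{s:cartans}), so $H^1(\theta,H_f)$ makes sense. Next I would show that the isomorphism $H^1(\sigma,H_f)\simeq H^1(\theta,H_f)$ descends to the $W_i$-quotients. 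The Weyl group $W=\Norm_G(H_f)/H_f$ carries both a $\sigma$- and a $\theta$-action, and as noted in the proof of Proposition \ref{p:W}, $\sigma\theta$ acts trivially on $W$, so $W^\sigma=W^\theta$; moreover the imaginary root system for $\sigma$ on $H_f$ coincides with the (real, i.e. $\theta$-fixed) relevant root system, so the subgroup $W_i$ is the same subgroup of $W$ in both pictures. The functoriality in Lemma \ref{l:twist1} / Corollary \ref{c:main} — the isomorphism is induced by an inclusion-type map on cocycles compatible with conjugation by $\Norm_G(H_f)$ — then shows the vertical isomorphism intertwines the two $W_i$-actions, hence passes to the quotient, giving the right-hand vertical arrow.

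Then I would assemble the diagram: the top arrow is Borovoi's Proposition (the one stated just above), the left vertical arrow is Theorem \ref{t:main} for $G$, the right vertical arrow is the quotient of Theorem \ref{t:main} for $H_f$ just constructed, and I define $\phi$ to be the composite $H^1(\theta,G)\xrightarrow{\sim}H^1(\sigma,G)\xrightarrow{\sim}H^1(\sigma,H_f)/W_i\xrightarrow{\sim}H^1(\theta,H_f)/W_i$ going around three sides. By construction the square commutes, and $\phi$ is an isomorphism of pointed sets since all three composed maps are. The last thing to verify is that $\phi$ is the \emph{natural} map, i.e. induced by the inclusion $H_f\hookrightarrow G$: this follows because each of the three isomorphisms in the composite is induced by a natural (inclusion or restriction) map at the level of cocycles — Corollary \ref{c:main} is built from the inclusion $G^{\sigma^c}\hookrightarrow G$ and Borel--Serre, and these are compatible with the inclusion $H_f\hookrightarrow G$ — so the composite agrees with the map induced by $H^1(\theta,H_f)\to H^1(\theta,G)$.

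The main obstacle I anticipate is the bookkeeping in the second step: verifying that the canonical isomorphism of Theorem \ref{t:main}, applied to $H_f$, is genuinely equivariant for the $\Norm_G(H_f)$-conjugation actions (so that it descends to $W_i$-quotients) and that it is compatible with the corresponding isomorphism for $G$ under the inclusion $H_f\hookrightarrow G$. Concretely this means tracing through Corollary \ref{c:main} and Proposition \ref{p:serregeneral}: one needs $\sigma^c=\sigma\theta$ to restrict to a compact form of $H_f$ (Lemma \ref{l:subcpctrealform}), the Cartan decomposition of $H_f$ to be the restriction of that of $G$, and the inclusion-induced maps on $Z^1$ to commute with conjugation by a representative $n\in\Norm_G(H_f)$ that can be chosen in $K=G^{\sigma^c}$ (possible since $N^{\sigma\theta}$ meets every component of $N$, as in Proposition \ref{p:W}). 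None of this is deep, but it is the part where the claim that "essentially the same proof carries over" has to be turned into an actual argument; everything else is formal diagram-chasing.
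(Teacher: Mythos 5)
Your proposal is correct and follows essentially the same route as the paper, which simply defines $\phi$ as the composite around the other three sides of the square and declares the result immediate. The extra bookkeeping you supply (that $H_f$ may be taken $\theta$-stable, that the vertical isomorphism for $H_f$ is equivariant for conjugation by representatives chosen in $N^{\sigma\theta}$ and hence descends to the $W_i$-quotient) is exactly the content the paper leaves implicit.
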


This is immediate.

\begin{remarkplain}
In an earlier version of this paper we proved the isomorphism
$H^1(\sigma,G)\simeq H^1(\theta,G)$ using this diagram. It is simpler
to prove this isomorphism directly as we have done 
in Section \ref{s:borelserre} and deduce this as a consequence.   
\end{remarkplain}

For later use we note that, in the unequal rank case, the cohomology
is captured by a proper subgroup.

Suppose $H$ is a $\theta$-stable Cartan subgroup. Then $H=TA$ where
$T$ and $A$ are connected complex tori, $T$ is the identity component
of $H^\theta$, and $A$ is the identity component of $H^{-\theta}$.

\begin{corollary}
\label{c:Mf}
Suppose $H_f$ is a $\sigma$ and $\theta$-stable fundamental Cartan
subgroup. Let $A_f$ be the identity component of $H_f^{-\theta}$,
and let  $M_f=\Cent_G(A_f)$. 
Then
$$
H^1(\sigma,G)\simeq H^1(\sigma,M_f)\simeq
H^1(\theta,M_f)\simeq
H^1(\theta,G).
$$
\end{corollary}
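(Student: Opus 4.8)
The plan is to produce the four isomorphisms by combining Theorem \ref{t:main} (via Corollary \ref{c:main}) with the Borovoi-type description of Proposition \ref{p:borovoi}, applied both to $G$ and to the Levi subgroup $M_f$. The two outer isomorphisms $H^1(\sigma,M_f)\simeq H^1(\theta,M_f)$ and $H^1(\sigma,G)\simeq H^1(\theta,G)$ are instances of Theorem \ref{t:main}: $M_f$ is $\sigma$-stable and $\theta$-stable since it is the centralizer of the connected torus $A_f$, which is $\sigma$- and $\theta$-stable; moreover $\theta|_{M_f}$ is a Cartan involution for $\sigma|_{M_f}$ by Theorem \ref{t:Cartaninv}(1)(b) (or directly, since $\sigma^c=\sigma\theta$ restricts to a compact real form of $M_f$ by Lemma \ref{l:subcpctrealform}). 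So the real content is the leftmost (equivalently, by the square, the rightmost) isomorphism $H^1(\sigma,G)\simeq H^1(\sigma,M_f)$.

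For that, the key point is that $H_f$ is a fundamental Cartan subgroup of $M_f$ as well as of $G$, and that the imaginary roots of $H_f$ are the same whether computed in $G$ or in $M_f$. Indeed, $H_f=T_fA_f$ with $A_f$ the identity component of $H_f^{-\theta}$, and the roots of $H_f$ in $M_f=\Cent_G(A_f)$ are exactly those roots of $H_f$ in $G$ that vanish on $A_f$. A root $\alpha$ is imaginary precisely when $\sigma\alpha=-\alpha$, equivalently (since $\sigma=\sigma^c\theta$ and $\sigma^c$ acts by $-1$ on all of $\Lie(H_f)$ at the level of the relevant real structure) when $\theta\alpha=\alpha$, i.e.\ when $\alpha$ is trivial on $A_f=(H_f^{-\theta})^0$; so every imaginary root of $H_f$ in $G$ is a root in $M_f$, and conversely. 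Hence $M_f$ has the same imaginary root system as $G$, so $H_f$ is fundamental in $M_f$, and $W_i$ is the same group in both cases. Proposition \ref{p:borovoi} applied to $G$ and to $M_f$ then gives
$$
H^1(\sigma,G)\simeq H^1(\sigma,H_f)/W_i\simeq H^1(\sigma,M_f),
$$
and likewise with $\theta$ in place of $\sigma$. All four groups are thus canonically identified with $H^1(\sigma,H_f)/W_i\simeq H^1(\theta,H_f)/W_i$, and the naturality in Proposition \ref{p:borovoi} makes the resulting square commute, yielding the chain of isomorphisms as stated.

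The main obstacle I expect is the bookkeeping identifying the imaginary root system of $H_f$ in $G$ with its root system in $M_f$, and checking that $H_f$ is genuinely \emph{fundamental} (minimal split rank) in $M_f$ rather than merely $\sigma$- and $\theta$-stable — one must verify there are no noncompact imaginary roots of $H_f$ in $M_f$ along which one could further deform, but this follows since any such root would already be a noncompact imaginary root in $G$, contradicting fundamentality of $H_f$ in $G$. Everything else is a formal application of Theorem \ref{t:main} and Proposition \ref{p:borovoi}, together with the observation that the natural maps $H^1(\tau,H_f)\to H^1(\tau,M_f)\to H^1(\tau,G)$ ($\tau=\sigma,\theta$) are compatible with the vertical isomorphisms of Theorem \ref{t:main}, which is exactly the commutativity already recorded in Proposition \ref{p:borovoi}.
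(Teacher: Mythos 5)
Your proposal is correct and is essentially the paper's own (one-line) proof: apply Proposition \ref{p:borovoi} to both $G$ and $M_f$ together with the observation that the imaginary root system, hence the imaginary Weyl group $W_i$, of $H_f$ is the same in $G$ and in $M_f$, the outer isomorphisms being Corollary \ref{c:main}. One small correction to your final paragraph: fundamentality of $H_f$ in $M_f$ amounts to the absence of \emph{real} roots (not of noncompact imaginary roots, which a fundamental Cartan may well have — e.g.\ the compact Cartan of $SL(2,\R)$), and here it is immediate because every root of $H_f$ in $M_f$ is trivial on $A_f$ and hence imaginary.
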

Note that $A_f\subset Z\Leftrightarrow M_f=G\Leftrightarrow\text{ the derived group of }G$ is of equal
rank. 

This follows from Proposition \ref{p:borovoi}, and the fact that the
imaginary Weyl groups of $H_f$ in $G$ and $M_f$ are the same. 
\sec{Strong real forms}
\label{s:strong}
In this section we assume $G$ is  connected complex reductive group. 

\begin{lemma}
\label{s:H1Gad}
Fix a real form $\sigma$ of $G$. 
The set of equivalence classes of  real forms in the inner class of
$\sigma$  is parametrized by $H^1(\sigma,\Gad)$. 
\end{lemma}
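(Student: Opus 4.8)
The plan is to exhibit the bijection between equivalence classes of real forms inner to $\sigma$ and $H^1(\sigma,\Gad)$ concretely, using the twisting formalism set up earlier. Any real form $\sigma'$ inner to $\sigma$ can be written $\sigma' = \int(g)\circ\sigma$ for some $g\in G$; since $\sigma'$ is involutive, the computation $\sigma'^2 = \int(g\sigma(g))\circ\sigma^2 = \int(g\sigma(g))$ shows $g\sigma(g)\in Z(G)$, i.e.\ the image $\bar g$ of $g$ in $\Gad$ satisfies $\bar g\,\sigma(\bar g)=1$, so $\bar g\in Z^1(\sigma,\Gad)$. First I would check this assignment $\sigma'\mapsto\cl(\bar g)$ is well defined on the nose up to the choice of $g$: replacing $g$ by $zg$ with $z\in Z(G)$ does not change $\bar g$, and $g$ is determined by $\sigma'$ up to exactly such a central multiple (if $\int(g)\circ\sigma = \int(g')\circ\sigma$ then $\int(g'g^{-1})=1$, so $g'g^{-1}\in Z(G)$). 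So we get a well-defined map from the set of real forms inner to $\sigma$ to $Z^1(\sigma,\Gad)$.

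Next I would pass to equivalence classes. Two real forms $\sigma'=\int(g)\circ\sigma$ and $\sigma''=\int(g')\circ\sigma$ are equivalent iff $\sigma'' = \int(x)\circ\sigma'\circ\int(x^{-1})$ for some $x\in G$; expanding, $\int(x)\circ\int(g)\circ\sigma\circ\int(x^{-1}) = \int(x g\,\sigma(x^{-1}))\circ\sigma$, so this holds iff $\bar g' = \bar x\,\bar g\,\sigma(\bar x^{-1})$ in $\Gad$ for some $\bar x$, which is precisely the coboundary relation defining $H^1(\sigma,\Gad)$. Hence the map descends to an injection from equivalence classes of inner real forms into $H^1(\sigma,\Gad)$. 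Surjectivity is the one point needing a genuine argument: given a cocycle $\bar g\in Z^1(\sigma,\Gad)$, lift $\bar g$ to some $g\in G$; then $g\sigma(g)$ lies in $Z(G)$ but need not be trivial, so $\int(g)\circ\sigma$ need not be an involution, and one must adjust $g$ by a central element. This is the main obstacle, and it is exactly the obstruction measured by $\ker\!\left(H^1(\sigma,Z(G))\to H^1(\sigma,G)\right)$ together with the structure of $Z(G)$.

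To handle it, write $z = g\sigma(g)\in Z(G)$. Since $\sigma$ is antiholomorphic and commutes with the relevant compact form, we may use the Cartan-type decomposition of the center: by \eqref{e:center} and the existence of a compact real form commuting with $\sigma$ (Lemma \ref{l:tau}), $Z(G) = Z(G)^{c}\cdot A$ where $A$ is a $\sigma$-stable vector group on which $\sigma$ acts as $-1$ after the usual normalization, so $\sigma$ acts on $A$ with $\wh H^0$ and $H^1$ controlled; in particular $1+\sigma$ is surjective on the vector-group part, and one can absorb the vector-group component of $z$ by replacing $g$ with $ga$ for a suitable $a\in A$, since $(ga)\sigma(ga) = z\cdot a\sigma(a)$. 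After this adjustment $z$ lies in the torsion subgroup, and a short check — using that the relevant class $\cl(\bar g)$ only depends on $g$ modulo $Z(G)$, so we are free to further modify $z$ by elements of $(1+\sigma)Z(G)$ — shows we may take $z=1$, i.e.\ $\int(g)\circ\sigma$ is a genuine involutive antiholomorphic automorphism, hence a real form inner to $\sigma$ mapping to the given class. (This is the same normalization performed in the proof of Lemma \ref{l:tau}, applied to the center rather than to $G$ itself.) Assembling the three steps gives the asserted parametrization.
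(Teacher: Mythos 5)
Your setup and the well-definedness/injectivity steps are fine and agree with what the paper intends (the paper states the lemma without proof, only recording the explicit map $\cl(h)\mapsto[\int(h)\circ\sigma]$ for $h\sigma(h)=1$ in $\Gad$). The problem is in your surjectivity step, which manufactures an obstacle that does not exist and then "resolves" it with a claim that is actually false. You computed at the outset that $(\int(g)\circ\sigma)^2=\int(g\sigma(g))$; since $\int(z)$ is the \emph{identity automorphism} for any central $z$, this already shows that $\int(g)\circ\sigma$ is an involution precisely when $g\sigma(g)\in Z(G)$ --- not when $g\sigma(g)=1$. So given a cocycle $\bar h\in Z^1(\sigma,\Gad)$, the automorphism $\int(\bar h)\circ\sigma$ (conjugation by the inner automorphism $\bar h\in\Gad=\Int(G)$, or equivalently by any lift $g$ of $\bar h$, which satisfies $g\sigma(g)\in Z(G)$) is already an antiholomorphic involution inner to $\sigma$, and it visibly maps back to $\cl(\bar h)$. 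No central adjustment of $g$ is needed, and surjectivity is immediate.

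Worse, the adjustment you propose cannot in general be carried out: the class of $z=g\sigma(g)$ in $Z^\sigma/(1+\sigma)Z$ is unchanged when $g$ is multiplied by a central element (it changes by $x\sigma(x)$), and this class --- the central invariant of Definition \ref{d:zinv} --- is frequently nontrivial. For example, for $G=\SL(2,\C)$ with $\sigma$ split, the cocycle in $\Gad=PGL(2,\C)$ giving the compact form $SU(2)$ has central invariant $-I\ne 1$ in $Z^\sigma/(1+\sigma)Z$, and it does not lift to a cocycle in $G$ with $g\sigma(g)=1$ (indeed $H^1(\sigma,\SL(2,\C))=1$ for the split form). This failure of lifting is exactly the phenomenon motivating strong real forms in Section \ref{s:strong}; had your normalization ``$z=1$'' been possible, the map $H^1(\sigma,G)\to H^1(\sigma,\Gad)$ would always be surjective, which the paper explicitly notes is false. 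The fix is simply to delete the final paragraph and observe that the involution condition only requires $g\sigma(g)$ to be central.
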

Explicitly the map is $\cl(h)\mapsto [\int(h)\circ\sigma]$ where
$h\sigma(h)=1$.

By Lemma \ref{l:twist1} it makes sense to define
$H^1([\sigma],\Gad)=H^1(\sigma,\Gad)$. 

\begin{remarkplain}
\label{r:notserre}
Our definition of equivalence of real forms (Definition \ref{d:realform}) is by conjugation by an inner automorphism of $G$.
The standard definition, for example see
\cite{serre_galois}*{III.1}, allows conjugation by $\Aut(G)$. 
With the standard definition the Lemma would hold with
$H^1(\sigma,\Gad)$ replaced by the image of the map to
$H^1(\sigma,\Aut(G))$.

For example suppose $G=\SL(2,\C)\times \SL(2,\C)$. In the inner class
of the split real form of $G$, there are four equivalence classes of
real forms according to our definition:
$\text{split}, \text{compact}$, $\text{split}\times\text{compact}$ and
$\text{compact}\times\text{split}$.  If one allows conjugation by
outer automorphisms there are only three real forms, since the last
two are equivalent.

For simple groups these two notions agree except in type $D_{2n}$. 
See \cite{algorithms}*{Section 3},  \cite{snowbird}*{Example 3.3} and 
Section \ref{s:adjoint}.  
\end{remarkplain}

It is easy to see that two real forms $\sigma_1,\sigma_2$ are in the
same inner class (Definition \ref{d:realform}) if and only if
$\theta_1,\theta_2$ have the same image in $\Out(G)$, where $\theta_i$
is a Cartan involution for $\sigma_i$. So let $\Out(G)_2$ be the set of
elements $\delta$ of $\Out(G)$ such that $\delta^2=1$, 
and write $p$ for the natural map $\Aut(G)\rightarrow \Out(G)$. 
We say a (holomorphic) involution $\theta$ is in the inner class of $\delta$ if 
$p(\theta)=\delta$. We say a real form $\sigma$ is in the inner class of $\delta$
if this holds for  a Cartan involution for $\sigma$.

For example the inner class corresponding to $\delta=1$ is called the
{\em compact} or {\em equal rank} inner class; a real form is in this class if and only if its Cartan 
involution is an inner automorphism.

Suppose $\sigma$ is a real form in the inner class of $\delta$. 
There are two natural choices of a basepoint for the set $H^1(\sigma,\Gad)$ of real forms in this inner class. 
One is the quasisplit (most split) real form.
Because of our focus on $\theta$, rather than $\sigma$,
we prefer to choose the quasicompact (most compact) form, which we now
define.

We say a real form is
{\em quasicompact} if its Cartan involution 
preserves a 
pinning datum 
$(H,B,\{X_\alpha\}_{\alpha \in \Delta})$.
Every inner class contains a unique distinguished involution, which is unique up to conjugation by an inner 
automorphism. See \cite{algorithms}*{Chapter 3}.

\begin{definition}
Suppose $\delta\in\Out(G)_2$. Let $\thetaqc(\delta)$ be a distinguished automorphism 
in the inner class of $\delta$, and let $\sigmaqc(\delta)$ be a corresponding real form 
by Corollary \ref{c:bijection}.
We refer to $[\sigmaqc(\delta)]$ or $[\thetaqc(\delta)]$ as the
equivalence class of quasicompact real forms in the inner class of $\delta$.
\end{definition}
If $\delta$ is fixed we will write $\thetaqc$ and $\sigmaqc$. 

Since any two choices of $\thetaqc(\delta)$ are conjugate by an inner automorphism
$[\thetaqc(\delta)]$ and $[\sigmaqc(\delta)]$ are well defined.

\begin{lemma}
\label{l:canonicalrealforms}
There is a canonical isomorphism 
\begin{equation}
\label{e:isomorphism}
H^1([\sigmaqc],\Gad)\simeq H^1([\thetaqc],\Gad).
\end{equation}
These pointed sets canonically parametrize 
the equivalence classes of real forms in the inner class of $\delta$, 
with the distinguished class going to the equivalence class of
quasicompact  real forms.
\end{lemma}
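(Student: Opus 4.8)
The plan is to assemble the statement from results already in hand: Theorem~\ref{t:main}, the twisting Lemma~\ref{l:twist1}, Lemma~\ref{s:H1Gad}, and the real-form/Cartan-involution dictionary of Corollary~\ref{c:bijection}. The two halves of the lemma (the canonical isomorphism, and the parametrization with its distinguished basepoint) will be handled separately and then glued at the basepoint.

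First I would note that the two pointed sets in \eqref{e:isomorphism} are well defined: since $Z(\Gad)=1$, the hypothesis $\ker\left(H^1(\tau,Z(\Gad))\to H^1(\tau,\Gad)\right)=1$ required in the definition of $H^1([\tau],\Gad)$ (given after Lemma~\ref{l:twist1}) holds for every involutive automorphism $\tau$ of $\Gad$. Hence $H^1([\sigmaqc],\Gad)$ and $H^1([\thetaqc],\Gad)$ make sense, and for any chosen representatives $\sigmaqc$, $\thetaqc$ they are canonically identified with $H^1(\sigmaqc,\Gad)$, $H^1(\thetaqc,\Gad)$ via the twisting isomorphisms. Next I would fix a representative $\sigmaqc$, choose a Cartan involution $\theta$ for it, and observe that by the definition of a quasicompact real form $\theta$ preserves a pinning datum, so it is a distinguished automorphism in the inner class of $\delta$ and therefore a legitimate representative of $[\thetaqc]$ (matching real forms and Cartan involutions inside the inner class via Corollary~\ref{c:bijection}). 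Then Theorem~\ref{t:main}, applied to $\Gad$ with the pair $(\sigmaqc,\theta=\thetaqc)$, supplies a canonical isomorphism $H^1(\sigmaqc,\Gad)\simeq H^1(\thetaqc,\Gad)$; composing with the twisting identifications gives \eqref{e:isomorphism}. Finally I would check that the outcome is independent of the choices of representatives and of the intermediate Cartan involution, using naturality of the isomorphism of Theorem~\ref{t:main}/Corollary~\ref{c:main} under inner conjugation together with the canonicity of the twisting identifications on both sides.

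For the parametrization I would apply Lemma~\ref{s:H1Gad} with $\sigma=\sigmaqc$ to identify $H^1(\sigmaqc,\Gad)$ with the set of equivalence classes of real forms in the inner class of $\delta$, the trivial class going to $[\sigmaqc]$. Dually, $H^1(\thetaqc,\Gad)$ is the set of $\Gad$-conjugacy classes of holomorphic involutions inner to $\thetaqc$, which by Corollary~\ref{c:bijection} (with $\mathcal A=\Int(G)$) is again the set of equivalence classes of real forms in the inner class of $\delta$, the trivial class going to $[\thetaqc]$, i.e.\ by definition to the quasicompact class. Since \eqref{e:isomorphism} is a map of pointed sets it respects basepoints, so under the two parametrizations the distinguished class matches the equivalence class of quasicompact real forms on both sides.

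The hard part will be the independence-of-choices assertion in the second paragraph --- that the isomorphism \eqref{e:isomorphism} one constructs depends neither on the representatives $\sigmaqc$, $\thetaqc$ nor on the intermediate Cartan involution --- since everything else is a direct appeal to a cited result. I expect this to require only a diagram chase through the explicit formulas for the twisting isomorphism of Lemma~\ref{l:twist1} and the Borel--Serre-type isomorphism of Corollary~\ref{c:main}: no new idea is needed, but it is the step where a careful write-up has to do genuine bookkeeping rather than cite a black box.
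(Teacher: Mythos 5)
Your proposal is correct and follows exactly the route the paper intends: the paper states Lemma \ref{l:canonicalrealforms} without proof, treating it as an immediate consequence of Theorem \ref{t:main} (via Corollary \ref{c:main}), the twisting Lemma \ref{l:twist1} (with $Z(\Gad)=1$), Lemma \ref{s:H1Gad}, and Corollary \ref{c:bijection}, which is precisely the assembly you describe. The one tiny imprecision is that a given Cartan involution for $\sigmaqc$ need only be \emph{conjugate} to a pinning-preserving automorphism rather than preserve a pinning itself, but since $[\thetaqc]$ is defined as a conjugacy class this is harmless.
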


\begin{exampleplain}
\label{ex:compact}
The group $G(\R)=G^\sigma$ is compact if and only if $\theta=1$. 
Then $H^1(\theta,G)=\{g\in G\mid g^2=1\}/\{g\mapsto xgx\inv\}$, i.e. 
$H^1(\theta,G)$ is 
the set of conjugacy classes of involutions of $G$.
Therefore, if we  fix  a Cartan subgroup $H$,
with Weyl group $W$, then
$$
H^1(\theta,G)\simeq H_2/W
$$
where $H_2=\{h\in H\mid h^2=1\}$.
See Example \ref{ex:serre1}.
\end{exampleplain}

Let $Z=Z(G)$. The action of $\Aut(G)$ on $Z$ factors to an action of
$\Out(G)$ on $Z$. 
Let $\Ztor$ be the subgroup of $Z$ consisting of all elements of finite order.

\begin{lemma}
Fix $\delta\in\Out(G)_2$, and suppose $\sigma$ is a real form in the
inner class of $\delta$.
Let $\theta$ be a Cartan involution for $\sigma$.
Note that the action of $\theta$ on $Z$ coincides with $\delta$.

Then $\Ztor^{\sigma} = \Ztor^{\theta}$ and 
there is a canonical isomorphism
$$
Z^\sigma/(1+\sigma)Z\simeq Z^\delta/(1+\delta)Z.
$$  
\end{lemma}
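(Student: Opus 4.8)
The plan is to deduce both assertions from properties of the compact real form $\sigma^c=\sigma\theta$ and of the Cartan decomposition of the abelian group $Z$, using throughout that inner automorphisms act trivially on $Z$, so that $\theta$ and $\delta$ induce the same automorphism of $Z$ (as already noted in the statement), and in particular $Z^\theta/(1+\theta)Z = Z^\delta/(1+\delta)Z$ on the nose.

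First I would prove the equality of torsion subgroups by showing that $\sigma^c$ fixes $\Ztor$ pointwise. Since $Z$ is a $\sigma^c$-stable subgroup of $G$, Lemma \ref{l:subcpctrealform} shows $\sigma^c|_Z$ is a compact real form of $Z$, and then Lemma \ref{l:cartandecomp} applied to $Z$ shows that $Z^{\sigma^c}$ is a maximal compact subgroup of $Z$. But $Z$ is abelian with finitely many connected components (it is diagonalizable), so its maximal compact subgroup is unique and contains every compact, in particular every finite, subgroup of $Z$, hence all of $\Ztor$. Thus $\sigma^c|_{\Ztor}$ is the identity, and from $\sigma=\sigma^c\theta$ we get $\sigma|_{\Ztor}=\theta|_{\Ztor}=\delta|_{\Ztor}$, whence $\Ztor^\sigma=\Ztor^\theta=\Ztor^\delta$.

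For the canonical isomorphism I would use the Cartan decomposition of $Z$ relative to $\sigma^c$. Writing $K_Z=Z^{\sigma^c}$ and $\P_Z=\Lie(Z)^{-\sigma^c}$, the fact that $Z$ is abelian makes Lemma \ref{l:cartandecomp} identify $Z$ with the direct product of groups $K_Z\times\exp(\P_Z)$, where $\exp(\P_Z)$ is a real vector group. Both $\sigma$ and $\theta$ commute with $\sigma^c$, hence preserve this decomposition; moreover $\sigma^c$ acts trivially on $K_Z$, so $\sigma$ and $\theta$ restrict to the \emph{same} automorphism of $K_Z$. For any involution $\tau$ of $Z$ respecting the decomposition one has $\wh H^0(\tau,Z)=\wh H^0(\tau,K_Z)\times\wh H^0(\tau,\exp(\P_Z))$, and the second factor vanishes since on a real vector space $1+\tau$ surjects onto the $\tau$-fixed subspace (as $X=(1+\tau)(X/2)$). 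Hence the inclusion $K_Z\hookrightarrow Z$ induces an isomorphism $\wh H^0(\tau,K_Z)\xrightarrow{\sim}\wh H^0(\tau,Z)$ for $\tau\in\{\sigma,\theta\}$; composing these two isomorphisms through the common group $\wh H^0(\sigma,K_Z)=\wh H^0(\theta,K_Z)$ gives a canonical isomorphism $Z^\sigma/(1+\sigma)Z\simeq Z^\theta/(1+\theta)Z=Z^\delta/(1+\delta)Z$.

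The argument is essentially routine; the only point requiring a little care is the inclusion $\Ztor\subset Z^{\sigma^c}$, which rests on the fact that a complex diagonalizable group has a unique maximal compact subgroup containing all of its torsion, together with the identification of $Z^{\sigma^c}$ with that maximal compact subgroup via Lemmas \ref{l:subcpctrealform} and \ref{l:cartandecomp}. One could alternatively invoke a $\wh H^0$-analogue of Proposition \ref{p:serregeneral}, but since $Z$ is abelian the direct computation above is just as short.
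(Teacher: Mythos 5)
Your proof is correct, and it follows the same overall strategy as the paper (show that $\sigma^c=\sigma\theta$ is trivial on the relevant compact part of $Z$, so that $\sigma$, $\theta$ and $\delta$ agree there, then reduce the Tate group $\wh H^0(\cdot,Z)$ to that part), but both halves are executed by a somewhat different route. For the first half, the paper observes that the closure of $\Ztor$ is compact and central, so Theorem \ref{t:conjcpctsubgps} forces $\Ztor$ into \emph{every} compact real form; you instead restrict $\sigma^c$ to $Z$ via Lemma \ref{l:subcpctrealform} and use the uniqueness of the maximal compact subgroup of a diagonalizable group. Both are valid and of comparable length. For the second half, the paper reduces $Z^\sigma/(1+\sigma)Z$ all the way down to $\Ztor^\sigma/(1+\sigma)\Ztor$ using divisibility of $Z^0$ and the fact that the quotients are $2$-groups, whereas you stop at the larger subgroup $K_Z=Z^{\sigma^c}$, splitting $Z\simeq K_Z\times\exp(\P_Z)$ by the Cartan decomposition and killing the vector factor in $\wh H^0$ via $X=(1+\tau)(X/2)$. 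Your version has the small advantage of avoiding the slightly fussy divisibility/$2$-group bookkeeping (it only needs that a real vector space is uniquely $2$-divisible), and it suffices because $\sigma$ and $\theta$ agree on all of $K_Z$, not just on $\Ztor$; the paper's version has the advantage that the intermediate object $\Ztor$ is independent of the choice of $\theta$, which makes the canonicity of the isomorphism visibly independent of $\sigma^c$. Since all the maps involved are induced by the inclusions $\Ztor\subset K_Z\subset Z$, the two isomorphisms coincide, so nothing is lost either way.
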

\begin{proof}
The closure of $\Ztor$ is compact, so by Theorem
\ref{t:conjcpctsubgps} $\Ztor$ is a subgroup of every compact real
form of $G$. Therefore $\sigma^c=\theta\sigma$ acts trivially on $\Ztor$, i.e. 
$\theta,\sigma$ and $\delta$ all have the same action on $\Ztor$. 
Also since $Z^\sigma/(1+\sigma)Z$ and $Z^\delta/(1+\delta)Z$ are two-groups, the quotients $Z/Z^0$, $Z^{\sigma}/(Z^{\sigma})^0$ and $Z^{\delta}/(Z^{\delta})^0$ are finite and $Z^0$ is divisible, it is easy to see
$Z^\sigma/(1+\sigma)Z\simeq 
\Ztor^\sigma/(1+\sigma)\Ztor\simeq 
\Ztor^\delta/(1+\delta)\Ztor\simeq 
Z^\delta/(1+\delta)Z$.
\end{proof}

\begin{definition}
\label{d:zinv}
Fix  $\delta\in\Out(G)_2$ and a real form $\sigma$ in the inner
class of $\delta$. Identify $[\sigma]$ with a class in
$H^1([\sigmaqc],\Gad)$, and define the central invariant 
\begin{equation}
\label{e:centralinvariant}
\zinv([\sigma])\in Z^\delta/(1+\delta)Z
\end{equation}
by the composition of maps:
$$
H^1([\sigmaqc],\Gad)\rightarrow 
H^2(\sigmaqc,Z)\overset\simeq\longrightarrow \wh H^0(\sigmaqc,Z))\overset\simeq\longrightarrow
Z^\sigma/(1+\sigma)Z\overset\simeq\longrightarrow Z^\delta/(1+\delta)Z
$$
The first map is from the connecting homomorphism in \eqref{e:longexact} coming from 
the exact sequence $1\rightarrow Z\rightarrow G\rightarrow
\Gad\rightarrow  1$. The second and third arrows are from
properties of Tate cohomology (see Section \ref{s:prelim}), and the
last one is from the preceding Lemma.
\end{definition}

\begin{remarkplain}
Alternatively we could  define $\zinv:H^1([\thetaqc],\Gad)\rightarrow
Z^\delta/(1+\delta)Z$ similarly, with $\theta,\thetaqc$ in place of
$\sigma,\sigmaqc$.
It is clear from the Lemma and the Definition that the following
diagram commutes:
$$
\xymatrix{
H^1([\sigmaqc],\Gad) \ar@{->}[r]\ar@{->}_{\simeq}[d]&Z^\delta/(1+\delta)Z\\
H^1([\thetaqc],\Gad)\ar@{->}[ur]
}
$$
\end{remarkplain}

The central invariant allows us to see how $H^1(\sigma,G)$ varies in a given inner class,
as in Example \ref{ex:sl2}.
See \cite{serre_galois}*{Section I.5.7, Remark 1}.

\begin{lemma}
\label{l:clarify}
Suppose $\sigma_1,\sigma_2$ are inner forms of $G$. If $\zinv([\sigma_1])=\zinv([\sigma_2])$ then 
$H^1(\sigma_1,G)\simeq H^1(\sigma_2,G)$. 
\end{lemma}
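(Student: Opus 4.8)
The plan is to use the twisting isomorphism of Lemma \ref{l:twist1}. The key point is that, once the central invariants agree, one can write $\sigma_2=\int(g)\circ\sigma_1$ with $g\in Z^1(\sigma_1,G)$ — that is, with $g\sigma_1(g)=1$ \emph{exactly}, not merely $g\sigma_1(g)\in Z$ — and then Lemma \ref{l:twist1} in the case $A=\{1\}$ yields $H^1(\sigma_1,G)\simeq H^1(\sigma_2,G)$ directly.

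First I would reduce to convenient representatives. Let $\delta\in\Out(G)_2$ be the common inner class of $\sigma_1$ and $\sigma_2$, fix $\sigma_{qc}=\sigmaqc(\delta)$, and use Lemma \ref{s:H1Gad} to write $[\sigma_i]\leftrightarrow\cl(\bar h_i)\in H^1(\sigma_{qc},\Gad)$. Lift $\bar h_i$ to $h_i\in G$ and put $\sigma_i'=\int(h_i)\circ\sigma_{qc}$; since $\bar h_i\sigma_{qc}(\bar h_i)=1$ in $\Gad$ we have $w_i:=h_i\sigma_{qc}(h_i)\in Z$, so $\sigma_i'$ is again an antiholomorphic involution lying in the equivalence class $[\sigma_i]$, and a one-line computation (using that $w_i$ is central) gives $\sigma_{qc}(w_i)=w_i$. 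Unwinding Definition \ref{d:zinv}, the connecting homomorphism $H^1(\sigma_{qc},\Gad)\to H^2(\sigma_{qc},Z)$ sends $\cl(\bar h_i)$ to the class of $w_i$ in $Z^{\sigma_{qc}}/(1+\sigma_{qc})Z$, and the remaining maps in that definition are isomorphisms; hence the hypothesis $\zinv([\sigma_1])=\zinv([\sigma_2])$ is equivalent to $w_2 w_1^{-1}\in(1+\sigma_{qc})Z$.

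Next I would produce the good cocycle. Set $g_0=h_2 h_1^{-1}$; a direct check gives $\sigma_2'=\int(g_0)\circ\sigma_1'$, and using $\sigma_1'(x)=h_1\sigma_{qc}(x)h_1^{-1}$ one computes $g_0\sigma_1'(g_0)=w_2 w_1^{-1}=:w$. Since inner automorphisms are trivial on $Z$ we have $\sigma_1'|_Z=\sigma_{qc}|_Z$, so by the previous paragraph $w\in(1+\sigma_1')Z$; write $w=z_0\sigma_1'(z_0)$ with $z_0\in Z$. Then $g:=g_0 z_0^{-1}$ still satisfies $\sigma_2'=\int(g)\circ\sigma_1'$ (as $z_0$ is central) and $g\sigma_1'(g)=w\,(z_0\sigma_1'(z_0))^{-1}=1$, i.e.\ $g\in Z^1(\sigma_1',G)$. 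Now Lemma \ref{l:twist1} gives $H^1(\sigma_1',G)\simeq H^1(\sigma_2',G)$; combining this with the isomorphisms $H^1(\sigma_i,G)\simeq H^1(\sigma_i',G)$ — also from Lemma \ref{l:twist1}, since $\sigma_i$ and $\sigma_i'$ are conjugate by an inner automorphism (Definition \ref{d:realform}) — completes the proof.

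The step I expect to be the main obstacle is the identification in the second paragraph: checking that Definition \ref{d:zinv} really evaluates the connecting map on $\cl(\bar h_i)$ as the class of $w_i$, and that the subsequent identifications there (Tate periodicity, and the isomorphism $Z^{\sigma_{qc}}/(1+\sigma_{qc})Z\simeq Z^\delta/(1+\delta)Z$ from the Lemma just before Definition \ref{d:zinv}) are compatible, so that equality of central invariants translates cleanly into $w_2 w_1^{-1}\in(1+\sigma_{qc})Z$. Everything else is formal manipulation of the sets $Z^1(\tau,G;A)$ with central values of $g\tau(g)$. One could instead phrase the whole argument intrinsically in terms of $H^1(\tau,G;A)$ without choosing $\sigma_{qc}$, but the same identification of $g\sigma_1(g)$ with the "difference" of central invariants would still have to be made.
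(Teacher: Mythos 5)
Your proof is correct and follows essentially the same route as the paper's: both pass to representatives $\int(h_i)\circ\sigmaqc$ with $h_i\sigmaqc(h_i)\in Z$, identify the central invariant with the class of $h_i\sigmaqc(h_i)$ in $Z^{\sigmaqc}/(1+\sigmaqc)Z$ via the connecting map, and obtain the isomorphism as (the composite of) twisting maps of the form $h\mapsto hg_1g_2^{-1}$. The paper leaves the "straightforward calculation" and the central adjustment to the reader; your explicit choice of $z_0$ with $w=z_0\sigma_1'(z_0)$ supplies exactly those omitted details.
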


\begin{proof}
Write $\sigma_i=\int(g_i)\circ\sigmaqc$, where $g_i \sigmaqc(g_i)\in Z$ ($i=1,2$).
A straightforward calculation shows that the map $h\rightarrow hg_1g_2\inv$ induces the desired isomorphism, 
provided $g_1\sigmaqc(g_1)=g_2\sigmaqc(g_2)$. Unwinding Definition \ref{d:zinv} we see this condition is equivalent to 
$\zinv(\sigma_1)=\zinv(\sigma_2)$. We leave the details to the reader.
\end{proof}

The map $H^1(\sigma,G)\rightarrow H^1(\sigma,\Gad)$ is not necessarily
surjective. This failure of surjectivity causes some difficulties in
precise statements of the local Langlands conjecture.  See \cite{abv},
\cite{vogan_local_langlands}, and for the
$p$-adic case   \cite{kaletha_rigid}. This leads to the notion of {\it strong real form} of
$G$. 

\begin{definition}
\label{d:strongreal}
Fix $\delta\in\Out(G)_2$ and a distinguished involution $\thetaqc$ in the inner class of $\delta$.
A strong real form, in the inner class of
$\thetaqc$, is an element $g\in G$ satisfying $g\thetaqc(g)\in\Ztor$, i.e.\ an
element of $Z^1(\thetaqc,G;\Ztor)$.
Two strong real forms $g,h$ are said to be equivalent if
$h=tg\thetaqc(t\inv)$ for some $t\in G$.  Write $[g]$ for the
equivalence class of $g$, and let $\SRF_{\thetaqc}(G) = H^1(\thetaqc,G;\Ztor)$ be the set of
equivalence classes of strong real forms in the inner class of
$\thetaqc$.

If $g$ is a strong real form define $\zinv(g)=g\thetaqc(g)\in\Ztor^\delta$. 
We refer to $\zinv$ as the central invariant of a strong real form.
This factors to a well defined map 
$\zinv:\SRF_{\thetaqc}(G)\rightarrow\Ztor^\delta$.
\end{definition}

\begin{remarkplain}
In \cite{algorithms} strong real forms are defined as elements 
of the non-identity component of 
extended group ${}^{\thetaqc} G=G\rtimes\langle\thetaqc\rangle$,
with equivalence being conjugation by $G$. The map taking a strong
real form $g$ of our definition to $g\thetaqc\in G\thetaqc$ is a bijection
between the two notions.   
\end{remarkplain}

We want to eliminate the dependence of $\SRF_{\thetaqc}(G)$ on the choice 
of $\thetaqc$.

\begin{lemma} \label{l:canSRF}
Fix $\delta\in\Out(G)_2$ and distinguished involutions $\thetaqc, \thetaqc'$
in the inner class of $\delta$.
\begin{enumerate}
\item There exists $h \in G$ such that $\thetaqc' = \int(h) \circ \thetaqc \circ
\int(h)^{-1}$, and for any such $h$ we have a bijection
\begin{align*}
Z^1(\thetaqc',G;\Ztor) & \longrightarrow Z^1(\thetaqc,G;\Ztor) \\
g & \longmapsto g h \thetaqc(h)^{-1}
\end{align*}
which is compatible with the maps $\zinv$ to $\Ztor^{\delta}$.
\item The induced map
\[ Z^1(\thetaqc',G;\Ztor) / (1-\delta)Z \rightarrow Z^1(\thetaqc,G;\Ztor) / (1-\delta)Z\]
does not depend on the choice of $h$.
In particular we get a \emph{canonical} bijection $\SRF_{\thetaqc'}(G)
\simeq \SRF_{\thetaqc}(G)$.
\end{enumerate}
\end{lemma}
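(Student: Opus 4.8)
The plan is to establish (1) first, then deduce (2) by a twisting comparison. For (1), the existence of $h$ with $\thetaqc'=\int(h)\circ\thetaqc\circ\int(h)\inv$ is exactly the statement that distinguished involutions in a fixed inner class are conjugate by an inner automorphism, which is recalled in the excerpt (``Every inner class contains a unique distinguished involution, which is unique up to conjugation by an inner automorphism''). Given such an $h$, write $w=h\thetaqc(h)\inv$; since both $\thetaqc$ and $\thetaqc'$ are involutions, a short computation shows $w\in Z$, and in fact $w=h\thetaqc(h)\inv$ with $\thetaqc'(h)=w h$ forcing $h\thetaqc(h)\in Z$ — so $h\in Z^1(\thetaqc,G;Z)$. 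Then I would invoke Lemma \ref{l:twist1} (the twisting isomorphism $H^1(\tau,G;z)\to H^1(\tau',G;zw\inv)$ via $g\mapsto gh\inv$, or here its cocycle-level version) to see that $g\mapsto g h\thetaqc(h)\inv$ carries $Z^1(\thetaqc',G;\Ztor)$ bijectively to $Z^1(\thetaqc,G;\Ztor)$: one checks directly that if $g\thetaqc'(g)\in\Ztor$ then $(gh\thetaqc(h)\inv)\thetaqc(gh\thetaqc(h)\inv)\in\Ztor$, using $\thetaqc'=\int(h)\thetaqc\int(h)\inv$ and that $\Ztor$ is $\thetaqc$-stable (indeed $\thetaqc$, $\thetaqc'$ and $\delta$ all act the same way on $\Ztor$, since $\Ztor$ lies in every compact real form). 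Compatibility with $\zinv$ is the identity $\zinv(gh\thetaqc(h)\inv)=(gh\thetaqc(h)\inv)\thetaqc(gh\thetaqc(h)\inv)=g\thetaqc'(g)\cdot h\thetaqc(h)\inv\thetaqc(h)\thetaqc^2(h)\inv=\zinv(g)$, after cancellation — a routine check.

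For (2), I must show the map on the quotient by $(1-\delta)Z$ is independent of the choice of $h$. Any two choices $h,h'$ differ by $h'=hc$ with $c\in Z^1(\thetaqc,G;Z)$ normalizing $\thetaqc$ — more precisely $\int(h'h\inv)$ commutes with $\thetaqc$, so $h'h\inv\in Z^1(\thetaqc,\Gad)$ lifted; the difference between the two twisting maps is multiplication by $h'\thetaqc(h')\inv(h\thetaqc(h)\inv)\inv\in Z$, which I claim lies in $(1-\delta)Z$. Concretely, writing $c=h'h\inv$, the discrepancy is $c\thetaqc(c)\inv=c\,\delta(c)\inv=(1-\delta)(c)$ once one knows $c\in Z$ — but $c$ need not be central, so instead one argues that the two cocycle-level maps differ by $g\mapsto g\cdot z$ with $z=(1-\delta)(\text{something})$, using that $h'h\inv$ lies in the preimage of $Z^1(\thetaqc,Z)$-up-to-coboundary and Lemma \ref{l:twist1}'s independence clause (which applies once $H^1(\thetaqc,Z)$-type hypotheses hold, and here we have only modded out by $(1-\delta)Z$ precisely to absorb the ambiguity). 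Passing to $Z^1(\thetaqc,G;\Ztor)/(1-\delta)Z$ kills $z$, giving a well-defined map; applying the same to the composite $\thetaqc\to\thetaqc'\to\thetaqc$ shows it is a two-sided inverse, hence a bijection, and it descends further to the canonical bijection $\SRF_{\thetaqc'}(G)\simeq\SRF_{\thetaqc}(G)$ since $(1-\delta)Z$ acts within equivalence classes of strong real forms.

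The main obstacle I anticipate is pinning down exactly the ambiguity in (2): one needs that two inner automorphisms conjugating $\thetaqc$ to $\thetaqc'$ differ by something whose associated central twist lies in $(1-\delta)Z$, and this is where the choice of the quotient $/(1-\delta)Z$ is forced rather than cosmetic. This requires carefully tracking how the $g\mapsto gh\thetaqc(h)\inv$ map changes under $h\rightsquigarrow h'$ and identifying the resulting central element modulo $(1-\delta)Z$; the rest — stability of $\Ztor$ under all the relevant involutions, the cocycle conditions, compatibility with $\zinv$ — is straightforward bookkeeping of the kind already used repeatedly in the paper, and I would present it as ``a direct computation'' or cite Lemma \ref{l:twist1}.
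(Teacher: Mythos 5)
Part (1) of your argument is fine and is essentially what the paper does (it dismisses this step as ``an elementary computation''). The problem is in part (2), and it sits exactly where you yourself flag the ``main obstacle'': you never actually prove that the discrepancy between the maps attached to two choices $h,h'$ lies in $(1-\delta)Z$. Writing $h'=hc$, the hypothesis only tells you that $\int(c)$ commutes with $\thetaqc$, i.e.\ that the image $\bar c$ of $c$ in $\Gad$ lies in $(\Gad)^{\thetaqc}$, hence that $w:=c\,\thetaqc(c)\inv$ is central with $w\,\delta(w)=1$; the two maps then differ by multiplication by $w$. What you must show is that $w\in(1-\delta)Z$, i.e.\ that the class of $w$ in $Z^{-\delta}/(1-\delta)Z=H^1(\delta,Z)$ vanishes. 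This is not automatic: that class is the image of $\bar c$ under the connecting map $(\Gad)^{\thetaqc}\rightarrow H^1(\thetaqc,Z)$, which is nonzero for a general involution. For instance with $G=SL(2,\C)$, $\theta=\int(\diag(i,-i))$ and $c$ a Weyl group representative in $\SL_2$, one gets $c\,\theta(c)\inv=-I$ while $(1-\delta)Z=\{I\}$. Your appeal to the independence clause of Lemma \ref{l:twist1} does not help, since its hypothesis $H^1(\tau,Z)=1$ typically fails here; and the phrase ``we have only modded out by $(1-\delta)Z$ precisely to absorb the ambiguity'' is circular --- the a priori ambiguity lives in $Z^{-\delta}$, which is strictly larger than $(1-\delta)Z$ whenever $H^1(\delta,Z)\neq 1$.

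The missing input, and the one the paper actually invokes, is that $\thetaqc$ is a \emph{distinguished} (pinning-preserving) involution: by \cite{kottwitzStableCusp}*{Lemma 1.6} the preimage of $(\Gad)^{\thetaqc}$ in $G$ is exactly $Z(G)\,G^{\thetaqc}$. Writing $c=zk$ with $z\in Z$ and $k\in G^{\thetaqc}$ gives $w=c\,\thetaqc(c)\inv=z\,\delta(z)\inv\in(1-\delta)Z$, after which the rest of your argument (descent to the quotients, the two-sided inverse, further descent to $\SRF$) goes through. So the gap is a single but genuinely nontrivial step, and it is precisely where distinguishedness of $\thetaqc$ enters; without it the lemma is false. (Two smaller slips: it is $\int(h\inv h')$, not $\int(h'h\inv)$, that commutes with $\thetaqc$; and the relevant condition on $c$ is $c\,\thetaqc(c)\inv\in Z$, which is not the condition $c\,\thetaqc(c)\in Z$ defining $Z^1(\thetaqc,G;Z)$.)
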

\begin{proof}
\begin{enumerate}
\item This is an elementary computation.
\item The element $h$ is well defined up to multiplication on the right by an
element of the preimage of $(G_{\mathrm{ad}})^{\thetaqc}$ in $G$. By
\cite{kottwitzStableCusp}*{Lemma 1.6}, this preimage is $Z(G) G^{\thetaqc}$, and the
result follows.
\end{enumerate}
\end{proof}

\begin{definition}
Fix $\delta\in\Out(G)_2$.
Let
$$
\SRF_{\delta}(G) = \lim_{\thetaqc} \SRF_{\thetaqc}(G) 
$$
where the (projective or injective) limit is taken over all
quasicompact involutions $\thetaqc$ in the inner class, using Lemma
\ref{l:canSRF}.
\end{definition}

We have a map $g \mapsto \int(g) \circ \thetaqc$ from $Z^1(\thetaqc, G; \Ztor) /
(1-\delta)Z$ to the set of holomorphic involutions of $G$ in the inner
class of $\delta$, and it is easy to show that it is surjective. Moreover as
$\thetaqc$ varies in the set of distinguished involutions in the inner class of
$\delta$, these maps commute with the maps defined in Lemma \ref{l:canSRF} (1).
We obtain a natural \emph{surjective} map from $\SRF_\delta(G)$ to the set of
equivalence classes of holomorphic involutions of $G$ in the inner class of
$\delta$.

\begin{remarkplain}
\label{rem:equivalent}
In \cite{abv} and 
\cite{vogan_local_langlands} strong real forms are defined 
in terms of the Galois action, as opposed to the Cartan involution 
as  in \cite{algorithms}  (and elsewhere, including \cite{bowdoin}).
The preceding discussion together with Corollary \ref{c:main} show that these two theories are indeed equivalent. 
However the choices of basepoints in the two theories are different. 
In the Galois setting  we choose the quasisplit form, and 
in the algebraic setting  we use the quasicompact one. 

The invariant of a Galois strong real form 
is defined  \cite{vogan_local_langlands}*{(2.8)(c)}.
This differs from the normalization here by multiplication 
by  $\exp(2\pi i\ch\rho)\in Z$.
Note that 
the ``pure'' rational forms, which are parametrized by $H^1(\sigma,G)$,
include the quasisplit one 
\cite{vogan_local_langlands}*{Proposition 2.7(c)}, rather than the quasicompact one.
\end{remarkplain}

We can now describe strong real forms in terms of Galois cohomology sets  $H^1(\sigma,G)$.
Recall if $\sigma$ is a real form in the inner class of $\delta$, then
the central invariant  $\zinv([\sigma])$  is an element of 
$ Z^\delta/(1+\delta)Z$ (Definition \ref{d:zinv}).

\begin{proposition}
\label{p:H1}
Suppose $\sigma$ is a real form of $G$, in the inner class of
$\delta$. Choose a representative 
$z\in \Ztor^\delta$ of $\zinv([\sigma])\in Z^\delta/(1+\delta)Z$. 
Then there is a bijection
$$
H^1(\sigma,G)\longleftrightarrow\text{the set of strong real forms of central invariant }z.
$$
\end{proposition}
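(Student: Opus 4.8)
\emph{Proof proposal.} The plan is to transport the left-hand side to the Cartan side via Theorem \ref{t:main}, and then to recognize the set of strong real forms with a fixed central invariant as an honest Cartan cohomology set obtained by twisting. First I would fix a Cartan involution $\theta$ for $\sigma$ (which exists by Theorem \ref{t:Cartaninv}) and a distinguished involution $\thetaqc = \thetaqc(\delta)$ in the inner class of $\delta$. Since $p(\theta) = \delta = p(\thetaqc)$, I can write $\theta = \int(g_0)\circ\thetaqc$ for some $g_0 \in G$, and because $\theta$ is an involution, $g_0 \in Z^1(\thetaqc,G;Z)$; set $w := g_0\thetaqc(g_0)$, which lies in $Z=Z(G)$, and in fact in $Z^\delta$ since $Z$ is abelian.

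The crucial step is to check that the image of $w$ in $Z^\delta/(1+\delta)Z$ equals $\zinv([\sigma])$. Here I would invoke the Remark following Definition \ref{d:zinv}: under the identification of $[\sigma]\in H^1([\sigmaqc],\Gad)$ with $[\theta]\in H^1([\thetaqc],\Gad)$ furnished by Lemma \ref{l:canonicalrealforms}, the invariant $\zinv([\sigma])$ equals the value at $[\theta]$ of the $\theta$-analogue of $\zinv$. Unwinding that definition, $[\theta]$ corresponds to the class of the image $\bar g_0$ of $g_0$ in $\Gad$, and the connecting homomorphism $H^1(\thetaqc,\Gad)\rightarrow H^2(\thetaqc,Z)=Z^\delta/(1+\delta)Z$ coming from $1\rightarrow Z\rightarrow G\rightarrow\Gad\rightarrow 1$ sends $\cl(\bar g_0)$ to the class of $g_0\thetaqc(g_0)=w$. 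Hence $[w]=\zinv([\sigma])=[z]$ in $Z^\delta/(1+\delta)Z$, so $zw^{-1}=u\thetaqc(u)$ for some $u\in Z$. Replacing $g_0$ by $g_0u$ leaves $\int(g_0)$, and hence $\theta$, unchanged, while now $g_0\thetaqc(g_0)=z$; so I may assume from the outset that $\theta=\int(g_0)\circ\thetaqc$ with $g_0\thetaqc(g_0)=z$.

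With this normalization in hand, I would apply Lemma \ref{l:twist1} with $\tau=\thetaqc$ and $\tau'=\theta=\int(g_0)\circ\thetaqc$: since here $w=z$, the map $h\mapsto hg_0^{-1}$ induces a bijection $H^1(\thetaqc,G;\{z\})\simeq H^1(\theta,G;\{zw^{-1}\})=H^1(\theta,G;\{1\})=H^1(\theta,G)$, and composing with the isomorphism $H^1(\theta,G)\simeq H^1(\sigma,G)$ of Corollary \ref{c:main} gives $H^1(\sigma,G)\simeq H^1(\thetaqc,G;\{z\})$. To finish I would observe that, because $z\in\Ztor$, the inclusion $Z^1(\thetaqc,G;\{z\})\hookrightarrow Z^1(\thetaqc,G;\Ztor)$ identifies $H^1(\thetaqc,G;\{z\})$ with the fiber over $z$ of $\zinv:\SRF_{\thetaqc}(G)=H^1(\thetaqc,G;\Ztor)\rightarrow\Ztor^\delta$, i.e.\ with the equivalence classes of strong real forms in the inner class of $\thetaqc$ having central invariant $z$; by Lemma \ref{l:canSRF} this fiber is, canonically and independently of the choice of $\thetaqc$, the set of strong real forms with central invariant $z$. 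Chaining the displayed bijections then yields the proposition.

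I expect the main obstacle to be the middle step: matching $[w]$ with $\zinv([\sigma])$ requires a careful chase of the connecting homomorphism in \eqref{e:longexact} for $1\rightarrow Z\rightarrow G\rightarrow\Gad\rightarrow 1$ through the chain of Tate-cohomology identifications in Definition \ref{d:zinv}, together with a careful comparison of the $\sigma$- and $\theta$-normalizations of the central invariant (precisely the content of the Remark after Definition \ref{d:zinv}, which must be applied with attention since its two sides a priori live over different involutions). The remaining steps are formal consequences of Lemma \ref{l:twist1} and Corollary \ref{c:main}.
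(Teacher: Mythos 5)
Your proposal is correct and follows essentially the same route as the paper: produce $g_0$ with $\int(g_0)\circ\thetaqc$ a Cartan involution for $\sigma$ and $g_0\thetaqc(g_0)=z$, twist to get $H^1(\thetaqc,G;\{z\})\simeq H^1(\theta,G)$, and compose with Corollary \ref{c:main}. The only difference is that you spell out the existence of such a $g_0$ (via the connecting homomorphism and the Remark after Definition \ref{d:zinv}), a step the paper's proof simply asserts.
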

\begin{proof}
Fix a distinguished involution $\thetaqc$ of $G$ in the inner class of $\delta$.
There exists $g \in G$ such that $\int(g) \circ \thetaqc$ is a Cartan involution
for $\sigma$ and $g \thetaqc(g) = z$.
Fix such a $g$ and let $\theta = \int(g) \circ \thetaqc$.
Then we have a bijection
\begin{align*}
H^1(\thetaqc, G; \{z\}) & \longrightarrow H^1(\theta, G) \\
h & \longmapsto h g^{-1}
\end{align*}
and composing with the isomorphism $H^1(\theta, G) \simeq H^1(\sigma, G)$ of
Corollary \ref{c:main} gives the result.
\end{proof}

Note that the bijection not only depends on the choice of representative $z \in
\Ztor^\delta$ of $\zinv([\sigma])\in \Ztor^\delta/(1+\delta)\Ztor$, but also on
the choice of $g$ in the proof: $g$ could be replaced by $g x$, where $x \in Z$
is such that $x \delta(x) = 1$.

\begin{corollary}
\label{c:interpret}
Choose representatives $\{z_i\mid i\in I\}$ 
for the image of $\zinv:\SRF_\delta(G)\rightarrow \Ztor^\delta$. 
For each $i\in I$ choose a real form $\sigma_i$ of $G$ such that $\zinv([\sigma_i])=z_i \mod (1+\delta)\Ztor$. Then 
there is a  bijection
$$
\SRF_\delta(G)\longleftrightarrow\bigcup_i H^1(\sigma_i,G).
$$
\end{corollary}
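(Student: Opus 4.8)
The plan is to reduce the statement to Proposition \ref{p:H1} by unwinding the definition of $\SRF_\delta(G)$ as a limit over distinguished involutions. First I would fix once and for all a distinguished involution $\thetaqc$ in the inner class of $\delta$, so that by Lemma \ref{l:canSRF} we may identify $\SRF_\delta(G)$ with $\SRF_{\thetaqc}(G) = H^1(\thetaqc, G; \Ztor)$, and the map $\zinv$ with $g \mapsto g\thetaqc(g) \in \Ztor^\delta$. The key observation is that $\SRF_{\thetaqc}(G)$ is fibered over $\Ztor^\delta$ by $\zinv$: we have
$$
\SRF_{\thetaqc}(G) = \coprod_{z \in \Ztor^\delta} \zinv^{-1}(z),
$$
and two elements $z, z'$ of $\Ztor^\delta$ have $\zinv^{-1}(z)$ and $\zinv^{-1}(z')$ in canonical bijection whenever $z' = z \cdot x\delta(x)$ for some $x \in Z$ with... (more precisely, twisting by such $x$ as in the remark following Proposition \ref{p:H1}); in particular the fiber depends, up to canonical bijection, only on the class of $z$ in $\Ztor^\delta/(1+\delta)\Ztor$. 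Hence choosing the representatives $\{z_i \mid i \in I\}$ for the image of $\zinv$ picks out exactly one fiber from each nonempty "diagonal" class, and
$$
\SRF_\delta(G) = \coprod_{i \in I} \zinv^{-1}(z_i).
$$

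Next I would invoke Proposition \ref{p:H1}: for each $i$, the real form $\sigma_i$ chosen with $\zinv([\sigma_i]) = z_i \bmod (1+\delta)\Ztor$ satisfies the hypothesis of that proposition, and since $z_i$ is a representative in $\Ztor^\delta$ of $\zinv([\sigma_i])$, the proposition supplies a bijection
$$
H^1(\sigma_i, G) \longleftrightarrow \{\text{strong real forms of central invariant } z_i\} = \zinv^{-1}(z_i).
$$
Taking the disjoint union over $i \in I$ and combining with the decomposition of $\SRF_\delta(G)$ above yields the asserted bijection $\SRF_\delta(G) \longleftrightarrow \bigcup_i H^1(\sigma_i, G)$.

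The one point requiring genuine care — and the main obstacle — is checking that the fiber decomposition of $\SRF_{\thetaqc}(G)$ over $\Ztor^\delta/(1+\delta)\Ztor$ is exhaustive and that the chosen $z_i$ hit every nonempty class exactly once; this is really the content of the phrase "representatives for the image of $\zinv$," so one must verify that the image of $\zinv : \SRF_\delta(G) \to \Ztor^\delta$ is a union of cosets of $(1+\delta)\Ztor$ (equivalently, that the twisting construction in the remark after Proposition \ref{p:H1} is available for every $x \in Z$ with $x\delta(x)=1$ and does not change the class in $\SRF$), so that choosing one $z_i$ per coset in the image genuinely enumerates the fibers without overlap or omission. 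Once that bookkeeping is in place, the rest is a formal assembly of Proposition \ref{p:H1} over the index set $I$, and no further calculation is needed.
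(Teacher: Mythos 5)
Your overall strategy --- decompose $\SRF_\delta(G)$ into the fibers of $\zinv$ and apply Proposition \ref{p:H1} to each fiber --- is exactly what the paper intends (it offers no written proof, treating the corollary as immediate from Proposition \ref{p:H1}). However, you have misread the index set $I$, and under your reading the statement you prove is not the one asserted. In the corollary the $z_i$ enumerate the image of $\zinv$ in $\Ztor^\delta$ itself, one $z_i$ for each element of that image; they are \emph{not} one representative per coset of $(1+\delta)\Ztor$. This is forced by the remark immediately following the corollary: $I$ is finite if and only if the connected center of $G$ is split, whereas $\Ztor^\delta/(1+\delta)\Ztor$ is always finite. (For instance for $G=\C^\times$ in the compact inner class, $\zinv$ has image all of $\Ztor$, so $I$ is infinite, while there is only one coset.) Under your reading --- one $z_i$ per nonempty ``diagonal class'' --- the displayed equality $\SRF_\delta(G)=\coprod_i\zinv^{-1}(z_i)$ fails whenever two distinct elements of the image lie in the same coset, and the right-hand side of the corollary would undercount $\SRF_\delta(G)$: the canonical bijections $\zinv^{-1}(z)\simeq\zinv^{-1}(z\,x\delta(x))$ you invoke identify such fibers with one another, but they do not make the disjoint union of several fibers equal to a single one.

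With the correct reading, the ``main obstacle'' you flag in your final paragraph (that the image of $\zinv$ be a union of cosets and that the $z_i$ hit each class exactly once) evaporates: $\SRF_\delta(G)=\coprod_{i\in I}\zinv^{-1}(z_i)$ is the tautological decomposition of a set into the fibers over its image, and no coset bookkeeping is needed. The only point that genuinely requires a check is that Proposition \ref{p:H1} can be applied to $\sigma_i$ with the \emph{specific} representative $z_i$, i.e.\ that there exists $g$ with $\int(g)\circ\thetaqc$ a Cartan involution for $\sigma_i$ and $g\thetaqc(g)=z_i$ exactly; this holds because replacing $g$ by $gx$ with $x\in Z$ leaves $\int(g)\circ\thetaqc$ unchanged and multiplies $g\thetaqc(g)$ by $x\delta(x)$, and $(1+\delta)\Ztor$ is contained in $\{x\delta(x)\mid x\in Z\}$. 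With these two corrections the rest of your argument assembles correctly.
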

This gives an interpretation of $\SRF_\delta(G)$ in classical cohomological terms.
A similar statement holds in the p-adic case \cite{kaletha_rigid}.

The set $I$ is finite if and only if the connected center of $G$ is split (this
condition only depends on $\delta$). 
As in \cite{kaletha_rigid}  
or \cite{algorithms}*{Section 13} the theory can be modified to replace this with a finite set even when this condition is not satisfied.
In any case the group $\Ztor^\delta / (1+\delta)\Ztor$ is finite, and for $z \in
\Ztor^\delta$ and $x \in \Ztor$ there is an obvious isomorphism
\[ H^1(\thetaqc, G; \{z\}) \simeq H^1(\thetaqc, G; \{z x \delta(x)\}). \]

\begin{corollary}
\label{c:equalrank}
Suppose $\sigma$ is an equal rank real form of $G$.
Choose $x\in G$ so that $\int(x)$ is a Cartan involution for $\sigma$, and let
$z=x^2\in Z$. Then
$$
H^1(\sigma,G)\longleftrightarrow \text{the set of conjugacy classes
  of $G$ with square equal to $z$}
$$
If $H$ is a Cartan subgroup, with Weyl group $W$, then this is equal to
\begin{equation}
\label{e:h2z}
\{h\in H\mid h^2=z\}/W
\end{equation}
\end{corollary}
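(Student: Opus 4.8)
\textbf{Proof proposal for Corollary \ref{c:equalrank}.}

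The plan is to specialize Proposition \ref{p:H1} to the equal rank case $\delta = 1$. When $\delta = 1$ the distinguished involution $\thetaqc$ can be taken to be the identity automorphism of $G$ (the compact form is the quasicompact form in the equal rank inner class), so $H^1(\thetaqc,G;\{z\}) = H^1(1,G;\{z\}) = Z^1(1,G;\{z\})/\{h \sim tht^{-1}\}$, where $Z^1(1,G;\{z\}) = \{h \in G \mid h^2 = z\}$. Thus Proposition \ref{p:H1} directly identifies $H^1(\sigma,G)$ with the set of $G$-conjugacy classes of elements whose square is $z$, once we check that $z = x^2$ represents $\zinv([\sigma])$. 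This last point is where I would spend a sentence: given that $\int(x)$ is a Cartan involution for $\sigma$ in the inner class of $\delta = 1$, we have $\thetaqc = \int(x') $ for a distinguished $x'$, and $\int(x)\int(x')^{-1}$ being inner forces $x (x')^{-1} \in Z^1(\thetaqc, G; Z)$; chasing through Definition \ref{d:zinv}, the class of $x^2 = x\thetaqc(x)$ in $Z^\delta/(1+\delta)Z = Z/2Z$ is exactly $\zinv([\sigma])$. Taking $z = x^2$ as the chosen representative in $\Ztor^\delta$ then makes Proposition \ref{p:H1} give the first bijection verbatim.

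For the second assertion — that this set of conjugacy classes equals $\{h \in H \mid h^2 = z\}/W$ — I would argue as follows. Every element $h$ with $h^2 = z \in Z$ is semisimple: $h$ normalizes every subgroup normalized by $h^2 = z$, and more directly, $\int(h)$ is an involutive automorphism of $G$, hence $h$ acts semisimply in any faithful representation (its square is central, so its eigenvalues are square roots of a fixed scalar, a finite set). A semisimple element lies in some maximal torus, and all maximal tori are conjugate, so every such $h$ is $G$-conjugate into the fixed Cartan subgroup $H$. Two elements of $H$ with square $z$ are $G$-conjugate if and only if they are conjugate by $N = \Norm_G(H)$: this is the standard fact that for a connected reductive group two elements of a maximal torus are conjugate in $G$ iff they are conjugate by the Weyl group (see e.g. the argument via $\Cent_G(h)^0$ being reductive with $H$ a maximal torus, so that if $h' = g h g^{-1}$ then $g H g^{-1}$ and $H$ are both maximal tori of $\Cent_G(h')^0$, hence conjugate by an element of that group, which we may absorb to get $g \in N$). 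Since $H$ is abelian, $N$-conjugacy on $H$ is the same as $W = N/H$-conjugacy, giving the identification with $\{h \in H \mid h^2 = z\}/W$.

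The main obstacle is not any single hard step but rather the bookkeeping around basepoints: one must be careful that the representative $z = x^2$ chosen here is consistent with the representative of $\zinv([\sigma])$ implicit in Proposition \ref{p:H1}, since that proposition's bijection depends on both the choice of $z \in \Ztor^\delta$ and an auxiliary choice of $g$. Concretely, in the equal rank case one chooses $\thetaqc = 1$ (a legitimate distinguished involution, as its fixed pinning is the whole group), then picks $g$ with $\int(g)\circ \thetaqc = \int(g)$ a Cartan involution for $\sigma$ and $g \cdot \thetaqc(g) = g^2 = z$; taking $g = x$ and $z = x^2$ works, and the bijection $h \mapsto h g^{-1} = hx^{-1}$ of Proposition \ref{p:H1} composed with Corollary \ref{c:main} is the claimed one. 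Everything else — semisimplicity of involutions and the Weyl group description of torus conjugacy classes — is classical and I would cite rather than reprove it.
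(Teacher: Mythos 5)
Your proposal is correct and follows essentially the same route the paper intends: specialize Proposition \ref{p:H1} to $\delta=1$ with $\thetaqc=1$ and $g=x$, so that strong real forms of central invariant $z=x^2$ are exactly conjugacy classes of elements with square $z$, and then invoke the classical facts that such elements are semisimple and that $G$-conjugacy on a maximal torus of a connected reductive group is $W$-conjugacy (the $z=1$ case is the paper's Examples \ref{ex:serre1} and \ref{ex:compact}). Your verification that $x^2$ represents $\zinv([\sigma])$ and your attention to the choice of $g$ in Proposition \ref{p:H1} are exactly the bookkeeping the paper leaves implicit.
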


\begin{exampleplain}
\label{ex:serre2}
Taking $x=z=I$ gives $G(\R)$ compact and recovers \cite{serre_galois}*{III.4.5}: $H^1(\sigma,G)$ is the set of  conjugacy classes 
of involutions in  $G$. See Example \ref{ex:serre1}.
\end{exampleplain}

\begin{exampleplain}
Let $G(\R)=Sp(2n,\R)$. We can take $x=\diag(iI_n,-iI_n)$, $z=-I$. 
It is easy to see that every element of $G$ whose square is $-I$ is conjugate to $x$.
This
gives the classical result $H^1(\sigma,G)=1$, which 
is equivalent to the classification of nondegenerate symplectic forms \cite{platonov_rapinchuk}*{Chapter 2}.
\end{exampleplain}

\begin{exampleplain}
Suppose $G(\R)=SO(Q)$, the isometry group of a nondegenerate real quadratic
form.
Suppose $Q$ has signature $(p,q)$.
If $pq$ is even we can take $z=I$, Corollary \ref{c:equalrank} applies, 
and the set \eqref{e:h2z} is equal to $\{\diag(I_r,-I_s)\mid r+s=p+q;\,s\text{ even}\}$.

Suppose $p$ and $q$ are odd. Apply Corollary \ref{c:Mf} with  $M_f(\R)=SO(p-1,q-1)\times GL(1,\R)$.
By the previous case we conclude $H^1(\sigma,G)$ is parametrized by $\{\diag(I_r,I_s)\mid r+s=p+q-2;\, r,s\text{ even}\}$. 
Adding $(1,1)$ this is the same as $\{\diag(I_r,-I_s)\mid r+s=p+q;\, s\text{ odd}\}$. 

In all cases we recover the classical fact that $H^1(\sigma,G)$ parametrizes the set of equivalence classes of 
quadratic forms of the same dimension and discriminant as $Q$ \cite{platonov_rapinchuk}*{Chapter 2},
\cite{serre_galois}*{III.3.2}.
\end{exampleplain}

\begin{exampleplain}
\label{ex:spin}
Now suppose $G(\R)=\Spin(p,q)$, which is  a (connected) two-fold cover of
the identity component of 
$\SO(p,q)$. 
A  calculation similar to that in the previous example 
shows that 
$|H^1(\sigma,\Spin(p,q))|=\lfloor\frac{p+q}4\rfloor+\delta(p,q)$
where $0\le \delta(p,q)\le 3$ depends on $p,q\pmod4$.
See Section \ref{s:simply}.

Skip Garibaldi pointed out this result can also be derived from 
the exact cohomology sequence associated to 
the exact sequence $1\rightarrow\Ztwo\rightarrow \Spin(n,\C)\rightarrow SO(n,\C)\rightarrow 1$;
the preceding result; the fact that $SO(p,q)$ 
is connected if $pq=0$ and otherwise has two connected components; 
and a calculation of the image of the map from
$H^1(\sigma,\Spin(n,\C))\rightarrow H^1(\sigma,SO(n,\C))$.
See \cite{bookofinvolutions}*{after (31.41)}, \cite{serre_galois}*{III.3.2} and also section \ref{s:fibers}.
The result is:

\medskip

\noindent $|H^1(\sigma,\Spin(Q))|$ equals the number of  quadratic forms having the
same dimension, discriminant, and Hasse invariant as $Q$ with each (positive
or negative) definite form counted twice.
\end{exampleplain}

\begin{remarkplain}
Kottwitz  relates $H^1(\sigma,G)$ to
the center of the dual group \cite{kottwitzStable}*{Theorem 1.2}. This is a somewhat different type of
result. It describes a certain quotient $H^1_{\mathrm{sc}}(\sigma, G)$ of $H^1(\sigma,G)$ (see \cite{kaletha_rigid}*{§3.4}), but if $G$ is simply connected this gives no information.
\end{remarkplain}

\sec{Fibers of $H^1(\sigma,G)\rightarrow H^1(\sigma,\Gbar)$}
\label{s:fibers}

In this section $G$ is a connected complex reductive group, and $\sigma$ is a real form of $G$.
Suppose $A\subset Z(G)$ is $\sigma$-stable and let $\Gbar=G/A$.
It is helpful to analyze
the fibers of the map $\psi:H^1(\sigma,G) \rightarrow H^1(\sigma,\Gbar)$.
In particular taking $G=\Gsc, \Gbar=\Gad$, 
and summing over $H^1(\sigma,\Gad)$, we obtain a description 
of $H^1(\sigma,\Gsc)$, complementary  to that of Proposition \ref{p:H1}.

Write $G(\R,\sigma)=G^\sigma$ and $\Gbar(\R,\sigma)=\Gbar^\sigma$.
Write $p$ for the projection map $G\rightarrow \Gbar$. This restricts to a map
$G(\R,\sigma)\rightarrow \Gbar(\R,\sigma)$,  taking the identity component
of $G(\R,\sigma)$ to that of $\Gbar(\R,\sigma)$. Therefore $p$  factors to a map
(not necessarily an injection):
\begin{subequations}
\renewcommand{\theequation}{\theparentequation)(\alph{equation}}  
\begin{equation}
p^*:\pi_0(G(\R,\sigma))\rightarrow \pi_0(\Gbar(\R,\sigma)).
\end{equation}
Define
\begin{equation}
\pi_0(G,\Gbar,\sigma)=\pi_0(\Gbar(\R,\sigma))/p^*(\pi_0(G(\R,\sigma))).
\end{equation}

There is a natural action of  $\overline G(\R,\sigma)$ on  $H^1(\sigma,A)$ defined as follows.
Suppose $g\in \Gbar(\R,\sigma)$. Choose $h\in G$ satisfying $p(h)=g$.
Then $g:a\rightarrow ha\sigma(h\inv)$ factors to a well defined action of 
$\Gbar(\R,\sigma)$ on $H^1(\sigma,A)$. Furthermore
the image of $G(\R,\sigma)$, which includes the identity component,
acts trivially, so this factors to an  action of $\pi_0(G,\Gbar,\sigma)$. 

\end{subequations}

\begin{proposition}
\label{p:fiber}
Suppose $\gamma\in H^1(\sigma,G)$, and write $\gamma=\cl(g)$ 
($g\in G^{-\sigma}$). Let $\sigma_\gamma=\int(g)\circ\sigma$.
Then there is a bijection
$$
H^1(\sigma,G)\supset \psi\inv(\psi(\gamma))\longleftrightarrow H^1(\sigma,A)/\pi_0(G,\Gbar,\sigma_\gamma).
$$

\end{proposition}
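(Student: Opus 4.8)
The plan is to compare the fiber of $\psi$ through $\gamma$ with an orbit-counting problem for the twisted form $\sigma_\gamma = \int(g)\circ\sigma$, exploiting the standard ``twisting'' machinery for nonabelian cohomology (\cite{serre_galois}*{Section I.5.5--I.5.7}) together with the exact sequence \eqref{e:longexact}. First I would recall that twisting $G$ by the cocycle $g$ gives a canonical bijection $t_g: H^1(\sigma,G) \to H^1(\sigma_\gamma,G)$ carrying $\gamma=\cl(g)$ to the base point $\cl(1)$, and likewise $\overline t_g: H^1(\sigma,\Gbar)\to H^1(\sigma_\gamma,\Gbar)$, and that these are compatible with $\psi$ and with the twisted map $\psi_\gamma: H^1(\sigma_\gamma,G)\to H^1(\sigma_\gamma,\Gbar)$ (note $A$ is central and $\sigma$-stable, hence $\sigma_\gamma$-stable, and the twisted action on $A$ coincides with the original $\sigma$-action since $g$ acts trivially by conjugation on the center). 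Thus $\psi\inv(\psi(\gamma))$ is in bijection with $\psi_\gamma\inv(\psi_\gamma(\cl(1)))$, i.e.\ with the kernel of $\psi_\gamma$. This reduces the general statement to the case $\gamma = \cl(1)$, at the cost of replacing $\sigma$ by $\sigma_\gamma$; this is why the answer is phrased in terms of $\pi_0(G,\Gbar,\sigma_\gamma)$ rather than $\pi_0(G,\Gbar,\sigma)$.

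Next I would identify $\ker\psi_\gamma$. From the exact sequence $1\to A\to G\to \Gbar\to 1$ and \eqref{e:longexact}, the pointed set $\ker\left(H^1(\sigma_\gamma,G)\to H^1(\sigma_\gamma,\Gbar)\right)$ is the image of $H^1(\sigma_\gamma,A)$ in $H^1(\sigma_\gamma,G)$; and the standard description of such kernels (again \cite{serre_galois}*{I.5.5}) says that two classes in $H^1(\sigma_\gamma,A)$ have the same image in $H^1(\sigma_\gamma,G)$ precisely when they differ by the action of $\Gbar^{\sigma_\gamma} = \Gbar(\R,\sigma_\gamma)$, where $\bar h\in\Gbar^{\sigma_\gamma}$ acts on a cocycle $a\in Z^1(\sigma_\gamma,A)=A^{-\sigma_\gamma}$ by $a\mapsto h a\,\sigma_\gamma(h\inv)$ for any lift $h\in G$ of $\bar h$ (this is well defined since $A$ is central and any two lifts differ by an element of $A^{\sigma_\gamma}$, which acts trivially). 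Hence $\ker\psi_\gamma \simeq H^1(\sigma_\gamma,A)/\Gbar(\R,\sigma_\gamma)$. Since $A\subset Z(G)$, the $\sigma_\gamma$-action on $A$ is again the original $\sigma$-action (conjugation by $g$ is trivial on $A$), so $H^1(\sigma_\gamma,A)=H^1(\sigma,A)$; only the acting group $\Gbar(\R,\sigma_\gamma)$ genuinely depends on $\gamma$.

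Finally I would cut the acting group down to $\pi_0(G,\Gbar,\sigma_\gamma)$. The image of $G(\R,\sigma_\gamma)=G^{\sigma_\gamma}$ under $p$ acts trivially on $H^1(\sigma,A)$: if $\bar h = p(h)$ with $h\in G^{\sigma_\gamma}$ then $h a\,\sigma_\gamma(h\inv) = h a h\inv = a$ since $a$ is central. Moreover the action factors through $\pi_0(\Gbar(\R,\sigma_\gamma))$ because $\Gbar(\R,\sigma_\gamma)^0$ is generated by $p(G(\R,\sigma_\gamma)^0)$ together with the image of a maximal torus, and one checks — this is the step requiring a little care — that the connected group $\Gbar(\R,\sigma_\gamma)^0$ acts trivially (for instance because $H^1(\sigma,A)$ is a discrete, in fact finite, set and the action of a connected group on it must be trivial; alternatively, $p(G(\R,\sigma_\gamma))$ already contains $\Gbar(\R,\sigma_\gamma)^0$ by the remark preceding the Proposition). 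Therefore the action descends to $\pi_0(G,\Gbar,\sigma_\gamma)=\pi_0(\Gbar(\R,\sigma_\gamma))/p^*(\pi_0(G(\R,\sigma_\gamma)))$, giving $\psi\inv(\psi(\gamma)) \simeq H^1(\sigma,A)/\pi_0(G,\Gbar,\sigma_\gamma)$ as claimed. The main obstacle is the last bookkeeping step: making precise that the orbit relation from \cite{serre_galois}*{I.5.5} for the full group $\Gbar(\R,\sigma_\gamma)$ really collapses to an action of the small quotient $\pi_0(G,\Gbar,\sigma_\gamma)$, i.e.\ that nothing outside $p(G(\R,\sigma_\gamma))\cdot\Gbar(\R,\sigma_\gamma)^0$ is needed and that the identity component acts trivially; everything else is the formal twisting/connecting-map yoga.
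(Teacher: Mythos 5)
Your argument is correct and is essentially the paper's proof: reduce to the trivial class by twisting (the paper performs this reduction last, calling it ``an easy twisting argument''), then read the fiber off the exact sequence \eqref{e:longexact} as the orbit of $H^1(\sigma,A)$ under the coboundary action of $\Gbar^{\sigma_\gamma}$, which factors through $\pi_0(G,\Gbar,\sigma_\gamma)$ exactly as in the discussion preceding the Proposition. One microscopic correction: two lifts of $\bar h\in\Gbar^{\sigma_\gamma}$ differ by an element of $A$, not of $A^{\sigma_\gamma}$ --- well-definedness on $H^1(\sigma,A)$ holds because the resulting discrepancy $a'\sigma_\gamma(a')\inv$ is a coboundary, not because it is trivial.
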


\begin{proof}
First assume $\gamma$ is trivial, and take $g=1$. 
Consider the exact sequence
$$
H^0(\sigma,G)\rightarrow H^0(\sigma,\Gbar)\rightarrow H^1(\sigma,A)\overset{\phi}\rightarrow H^1(\sigma,G)\overset\psi\rightarrow H^1(\sigma,\Gbar).
$$
This says $\psi\inv(\psi((\gamma))=\phi(H^1(\sigma,A))$, i.e. the orbit of the group $H^1(\sigma,A)$ acting on 
the identity coset.
This  is $H^1(\sigma,A)$, modulo the action of $H^0(\sigma,\Gbar)$,
and this action factors through the image of $H^0(\sigma,G)$. 
The general case follows from an easy twisting argument.
\end{proof}

We specialize to the case $G=\Gsc$ is simply connected and $\overline G=\Gad=\Gsc/\Zsc$ is the adjoint group.

\begin{corollary}
\label{c:fiber} 
Suppose $\sigma$ is a real form of $\Gsc$ and consider the map $\psi:H^1(\sigma,\Gsc)\rightarrow H^1(\sigma,\Gad)$. 

Suppose $\gamma\in H^1(\sigma,\Gad)$, and write $\gamma=\cl(g)$ ($g\in\Gad^{-\sigma}$). 
Let $\sigma_\gamma=\int(g)\circ\sigma$, viewed as an involution of $\Gsc$.
\begin{subequations}
\renewcommand{\theequation}{\theparentequation)(\alph{equation}}  
\begin{equation}
	\gamma\text{ is in the image of }\psi\Leftrightarrow \zinv([\sigma_\gamma])=\zinv([\sigma]),
\end{equation}
in which case
\begin{equation}
|\psi\inv(\gamma)|=|H^1(\sigma,\Zsc)|/|\pi_0(\Gad(\R,\sigma_\gamma))|.
\end{equation}
Furthermore
\begin{equation}
	|H^1(\sigma,\Gsc)|=|H^1(\sigma,\Zsc)|\sum_{\substack{\gamma\in H^1(\sigma,\Gad)\\ \zinv([\sigma_\gamma])=\zinv([\sigma])}}|\pi_0(\Gad(\R),\sigma_\gamma)|\inv
\end{equation}
\end{subequations}
\end{corollary}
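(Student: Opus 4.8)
The plan is to prove (a) and (b) directly and then read off (c) as a counting exercise: since $H^1(\sigma,\Gsc)$ is the disjoint union of the fibers $\psi^{-1}(\gamma)$ over $\gamma$ in the image of $\psi$, and by (a) that image is exactly $\{\gamma\in H^1(\sigma,\Gad)\mid\zinv([\sigma_\gamma])=\zinv([\sigma])\}$, one gets $|H^1(\sigma,\Gsc)|=\sum_{\gamma}|\psi^{-1}(\gamma)|$ over that set, and substituting (b) and pulling $|H^1(\sigma,\Zsc)|$ out of the sum yields (c). All sums are finite because $H^1(\sigma,\Gad)$ and $H^1(\sigma,\Zsc)$ are finite (standard for real algebraic groups, and $\Zsc$ is finite). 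The inputs I will use throughout are the exact sequence \eqref{e:longexact} for $\exact{\Zsc}{\Gsc}{\Gad}$ (whose last term is the central $H^2(\sigma,\Zsc)=\Zsc^\sigma/(1+\sigma)\Zsc$), Proposition \ref{p:fiber}, and Definition \ref{d:zinv}.

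For (a): by exactness of \eqref{e:longexact}, $\gamma\in\mathrm{image}(\psi)$ if and only if $\partial(\gamma)=0$ in $H^2(\sigma,\Zsc)$, where $\partial$ denotes the connecting map. I would then identify $\partial(\gamma)$ with $\zinv([\sigma_\gamma])\,\zinv([\sigma])^{-1}$ by an explicit cocycle computation: choose $g\in\Gad^{-\sigma}$ representing $\gamma$, lift to $\tilde g\in\Gsc$, and put $z_\gamma=\tilde g\,\sigma(\tilde g)\in\Zsc$; since $z_\gamma$ is central one checks $z_\gamma\in\Zsc^\sigma$ and that its class is $\partial(\gamma)$. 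Writing $\sigma=\int(h_0)\circ\sigmaqc$ with $h_0\in\Gsc$, so that $\sigma_\gamma=\int(\tilde g h_0)\circ\sigmaqc$ as an involution of $\Gsc$, Definition \ref{d:zinv} gives $\zinv([\sigma])=[h_0\sigmaqc(h_0)]$ and $\zinv([\sigma_\gamma])=[(\tilde g h_0)\sigmaqc(\tilde g h_0)]$, and the relation $\sigma(\tilde g)=h_0\sigmaqc(\tilde g)h_0^{-1}$ rearranges the latter to $[z_\gamma\,h_0\sigmaqc(h_0)]$. Hence $\zinv([\sigma_\gamma])=\partial(\gamma)\cdot\zinv([\sigma])$ in $\Zsc^\delta/(1+\delta)\Zsc$, using the canonical identification of this group with $\Zsc^\sigma/(1+\sigma)\Zsc$ (valid since $\sigma$ and $\sigmaqc$ act on $\Zsc$ through $\delta$), and (a) follows.

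For (b): we may assume $\gamma\in\mathrm{image}(\psi)$; choose $\gamma'\in H^1(\sigma,\Gsc)$ with $\psi(\gamma')=\gamma$ and apply Proposition \ref{p:fiber} with $A=\Zsc$, giving $\psi^{-1}(\gamma)=\psi^{-1}(\psi(\gamma'))\longleftrightarrow H^1(\sigma,\Zsc)/\pi_0(\Gsc,\Gad,\sigma_{\gamma'})$. Since $\Gsc$ is simply connected semisimple, so is the real form $\sigma_{\gamma'}$, hence $\Gsc^{\sigma_{\gamma'}}$ is connected, so $\pi_0(\Gsc(\R,\sigma_{\gamma'}))=1$ and $\pi_0(\Gsc,\Gad,\sigma_{\gamma'})=\pi_0(\Gad(\R,\sigma_{\gamma'}))$; and $\sigma_{\gamma'}$ is inner-conjugate to $\sigma_\gamma$ (both are $\int(g)\circ\sigma$ for $\Gad$-conjugate choices of $g$), so the cardinality of this $\pi_0$ equals $|\pi_0(\Gad(\R,\sigma_\gamma))|$. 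It then remains to see the action is free. For $\bar g\in\Gad(\R,\sigma_{\gamma'})$ with lift $\tilde{\bar g}\in\Gsc$, the action $\cl(a)\mapsto\cl(\tilde{\bar g}\,a\,\sigma_{\gamma'}(\tilde{\bar g})^{-1})$ on $H^1(\sigma,\Zsc)=H^1(\sigma_{\gamma'},\Zsc)$ simplifies (centrality of $a$) to translation by $\partial_0(\bar g)^{-1}$, where $\partial_0\colon\Gad^{\sigma_{\gamma'}}\to H^1(\sigma_{\gamma'},\Zsc)$ is the connecting map of \eqref{e:longexact}; by exactness $\ker\partial_0$ is the image of $\Gsc^{\sigma_{\gamma'}}$, which, being connected of finite index, is $\Gad(\R,\sigma_{\gamma'})^0$. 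Hence $\pi_0(\Gad(\R,\sigma_{\gamma'}))\hookrightarrow H^1(\sigma,\Zsc)$ and acts freely by translation, so the quotient has cardinality $|H^1(\sigma,\Zsc)|/|\pi_0(\Gad(\R,\sigma_\gamma))|$, which is (b); (c) is then immediate as explained.

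The computations in (a) and (b) are routine cocycle manipulations. I expect the main obstacle to be twofold: keeping the normalizations of Definition \ref{d:zinv} straight so that $\partial$ really matches the \emph{difference} of central invariants in (a) (the basepoint $\sigmaqc$ has to be carried through the twisting consistently), and, in (b), invoking correctly the one genuinely external ingredient — that the group of real points of a simply connected semisimple group is connected — which is what lets us replace $\pi_0(\Gsc,\Gad,\sigma_{\gamma'})$ by $\pi_0(\Gad(\R,\sigma_\gamma))$ and identifies $\Gad(\R,\sigma_{\gamma'})^0$ with the image of $\Gsc^{\sigma_{\gamma'}}$.
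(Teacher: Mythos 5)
Your proof is correct and follows the same route as the paper, which simply cites Proposition \ref{p:fiber} for (b) and (c) and disposes of (a) "by twisting" from the base point $\sigma=\sigmaqc$, $\gamma=1$; your cocycle identity $\zinv([\sigma_\gamma])=\partial(\gamma)\cdot\zinv([\sigma])$ is exactly that twisting argument made explicit. You also correctly supply the two ingredients the paper leaves implicit in deducing (b) from Proposition \ref{p:fiber}: the connectedness of $\Gsc(\R,\sigma_{\gamma'})$ (so that $\pi_0(\Gsc,\Gad,\sigma_{\gamma'})=\pi_0(\Gad(\R,\sigma_{\gamma'}))$), and the freeness of the $\pi_0$-action on $H^1(\sigma,\Zsc)$ as translation by the image of the injective connecting homomorphism, which is what converts the quotient set into the cardinality formula.
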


\begin{proof}
Statements (b) and (c) follow from the Proposition. 
For (a), when $\sigma=\sigmaqc$ and $\gamma=1$ the proof is immediate, and the general case follows by twisting. 
We leave the details to the reader.
\end{proof}

\sec{Tables}
\label{s:tables}

Most of these results can be computed by hand from Theorem \ref{t:H1}, 
or using Proposition \ref{p:fiber} and the classification of real forms (i.e. the adjoint case). 

By Theorem \ref{t:H1} the computation of $H^1(\Gamma,G)$ reduces to
calculating the strong real forms of $G$ and their central invariants.
The Atlas of Lie Groups and Representations does this computation as
part of its parametrization of (strong) real forms. This comes down to
calculating the orbits of a finite group (a subgroup of the Weyl
group) on a finite set (related to elements of order $2$ in a Cartan subgroup). 
See \cite{algorithms}*{Proposition 12.9} and 
{\tt www.liegroups.org/tables/galois}.

\subsec{Classical groups}
\label{s:classical}

\begin{tabular}{|l|l|l|}
\hline
Group&$|H^1(\sigma,G)|$& \\
\hline
$SL(n,\R),GL(n,\R)$ & $1$ &\\  
\hline
$SU(p,q)$ & $ \lfloor\frac p2\rfloor+\lfloor\frac q2\rfloor+1$
&$\begin{gathered}
\text{Hermitian forms of rank $p+q$ and }\\\text{discriminant $(-1)^q$}\end{gathered}$ \\\hline
$SL(n,\mathbb H)$ & $2$ & $\R^*/\text{Nrd}_{\H/\R}(\H^*)$ \\\hline
$Sp(2n,\R)$ & $1$ & real symplectic forms of rank $2n$\\\hline
$Sp(p,q)$ & $p+q+1$ & quaternionic Hermitian forms of rank $p+q$\\\hline
$SO(p,q)$ & 
$\lfloor \frac p2\rfloor+\lfloor \frac q2\rfloor+1$
&
$
\begin{gathered}
\text{real symmetric bilinear forms of rank $n$}\\\text{and discriminant $(-1)^q$}
\end{gathered}
$
\\\hline
$SO^*(2n)$ & 2 &\\
\hline
\end{tabular}

\bigskip

Here $\H$ is the quaternions, and $\text{Nrd}_{\H/\R}$ is the reduced norm map from
$\H^*$ to $\R^*$ (see \cite{platonov_rapinchuk}*{Lemma 2.9}).
For more information on Galois cohomology of classical groups 
see
\cite{serre_galois}, \cite{platonov_rapinchuk}*{Sections 2.3 and 6.6} and \cite{bookofinvolutions}*{Chapter VII}.

\subsec{Simply connected groups}
\label{s:simply}

The only  simply connected  groups with classical root system, which are not  
in the table in Section \ref{s:classical} 
are $\Spin(p,q)$ and $\Spin^*(2n)$.

Define $\delta(p,q)$ by the following table, depending on $p,q\pmod 4$.

$$
\begin{tabular}{c|cccc}
&0&1&2&3\\\hline
0&3&2&2&2\\
1&2&1&1&0\\
2&2&1&0&0\\
3&2&0&0&0  
\end{tabular}
$$

See Example \ref{ex:spin} for an explanation of these numbers.
\bigskip

\centerline{\begin{tabular}{|l|l|}
\hline
Group&$|H^1(\sigma,G)|$\\\hline
$\Spin(p,q)$& $\lfloor\frac{p+q}4\rfloor +\delta(p,q)$\\  
\hline
$\Spin^*(2n)$& $2$\\\hline
\end{tabular}}

\bigskip
\bigskip

\begin{tabular}{|c|c|c|c|c|c|}
\hline
\multicolumn{6}{|c|}{Simply connected exceptional groups}\\
\hline
inner class&group&$K$&real rank&name&$|H^1(\sigma,G)|$\\
\hline
compact&$E_6$ &$A_5A_1$ & $4$ & $\begin{gathered}\text{quasisplit'}\\ {\text{quaternionic}}\end{gathered} $& 3\\\hline
&$E_6$ &$D_5T$ & $2$ & Hermitian & 3\\\hline
&$E_6$ &$E_6$ & $0$ & compact & 3\\\hline
split&$E_6$ &$C_4$ & $6$ & split & 2\\\hline
&$E_6$ &$F_4$ & $2$ & quasicompact & 2\\\hline\hline
compact& $E_7$ & $A_7$ & $7$ &split & $2 $\\\hline
& $E_7$ & $D_6A_1$ & $4$ &quaternionic & $4$\\\hline
& $E_7$ & $E_6T$ & $3$ &Hermitian & $2$\\\hline
& $E_7$ & $E_7$ & $0$ &compact & $4$\\\hline
\hline
compact& $E_8$ & $D_8$ & $8$ &split & $3$\\\hline
& $E_8$ & $E_7A_1$ & $4$ &quaternionic & $3$\\\hline
& $E_8$ & $E_8$ & $0$ &compact & $3$\\\hline
\hline
compact& $F_4$ & $C_3A_1$ & $4$ &split & $3$\\\hline
& $F_4$ & $B_4$ & $1$ & & $3$\\\hline
& $F_4$ & $F_4$ & $0$ &compact & $3$\\\hline
\hline
compact& $G_2$ & $A_1A_1$ & $2$ &split & $2$\\\hline
& $G_2$ & $G_2$ & $0$ & compact & $2$\\\hline
\end{tabular}

\newpage

\subsec{Adjoint groups}
\label{s:adjoint}

If $G$ is adjoint $|H^1(\sigma,G)|$ is the number of real forms 
in the given inner class, which is well known. 
We also include the  component group, which is useful in connection with 
Corollary  \ref{c:fiber}.

One technical point arises in the case  of $PSO^*(2n)$. If $n$ is even there are 
two real forms which are related by an outer, but not an inner, automorphism. 
See Remark \ref{r:notserre}.

\bigskip

\begin{tabular}{|c|c|c|}
\hline
\multicolumn{3}{|c|}{Adjoint classical groups}\\
\hline
Group&$|\pi_0(G(\R))|$&$|H^1(\sigma,G)|$ \\
\hline
$PSL(n,\R)$  &  $
\begin{cases}
  2&n\text{ even}\\
  1&n\text{ odd}\\
\end{cases}$
& $
\begin{cases}
  2&n\text{ even}\\
  1&n\text{ odd}\\
\end{cases}$
\\\hline
$PSL(n,\H)$ &$1$&$2$\\\hline
$PSU(p,q)$ &$
\begin{cases}
  2&p=q\\1&\text{ otherwise}
\end{cases}
$
 & $\lfloor \frac{p+q}2\rfloor+1$\\\hline
$PSO(p,q)$ &
$
\begin{cases}
1&pq=0\\
1&p,q\text{ odd and } p\ne q\\
4&p=q\text{ even}\\
2&\text{otherwise}  
\end{cases}
$
&$
\begin{cases}
\lfloor\frac{p+q+2}4\rfloor&p,q\text{ odd}\\  
\frac{p+q}4+3&p,q\text{ even}, p+q=0\pmod4\\
\frac{p+q-2}4+2&p,q\text{ even}, p+q=2\pmod4\\
\frac{p+q+1}2&p,q\text{ opposite parity}\\  
\end{cases}
$
\\\hline
$PSO^*(2n)$ &
$\begin{cases}
  2&n\text{ even}\\
  1&n\text{ odd}\\
\end{cases}
$
&$
\begin{cases}
\frac n2+3&n\text{ even}  \\
\frac {n-1}2+2&n\text{ odd}  
\end{cases}
$
\\\hline

$PSp(2n,\R)$ &$2$&$\lfloor \frac n2\rfloor +2$\\\hline
$PSp(p,q)$ &
$
\begin{cases}
2&p=q\\
1&else  
\end{cases}
$
&$\lfloor \frac {p+q}2\rfloor +2$\\\hline
\end{tabular}

\bigskip

The groups $E_8,F_4$ and $G_2$ are both simply connected and adjoint. 
Furthermore in type $E_6$ the center of 
the simply connected group $\Gsc$ has order $3$, and 
it follows that $H^1(\sigma,\Gad)=H^1(\sigma,\Gsc)$ in these cases.
So the only groups not covered by the table in Section \ref{s:simply} 
are adjoint groups of type $E_7$.

\medskip

\begin{tabular}{|c|c|c|c|c|c|c|}
\hline
\multicolumn{7}{|c|}{Adjoint exceptional groups}\\
\hline
inner class&group&$K$&real rank&name&$\pi_0(G(\R))$ &$|H^1(G)|$\\
\hline
compact& $E_7$ & $A_7$ & $7$ &split & 2&$4$\\\hline
& $E_7$ & $D_6A_1$ & $4$ &quaternionic & $1$ &$4$\\\hline
& $E_7$ & $E_6T$ & $3$ &Hermitian & $2$ &$4$\\\hline
& $E_7$ & $E_7$ & $0$ &compact & $1$&$4$\\\hline
\end{tabular}
 
\bibliographystyle{plain}


\def\cprime{$'$} \def\cftil#1{\ifmmode\setbox7\hbox{$\accent"5E#1$}\else
  \setbox7\hbox{\accent"5E#1}\penalty 10000\relax\fi\raise 1\ht7
  \hbox{\lower1.15ex\hbox to 1\wd7{\hss\accent"7E\hss}}\penalty 10000
  \hskip-1\wd7\penalty 10000\box7}
  \def\cftil#1{\ifmmode\setbox7\hbox{$\accent"5E#1$}\else
  \setbox7\hbox{\accent"5E#1}\penalty 10000\relax\fi\raise 1\ht7
  \hbox{\lower1.15ex\hbox to 1\wd7{\hss\accent"7E\hss}}\penalty 10000
  \hskip-1\wd7\penalty 10000\box7}
  \def\cftil#1{\ifmmode\setbox7\hbox{$\accent"5E#1$}\else
  \setbox7\hbox{\accent"5E#1}\penalty 10000\relax\fi\raise 1\ht7
  \hbox{\lower1.15ex\hbox to 1\wd7{\hss\accent"7E\hss}}\penalty 10000
  \hskip-1\wd7\penalty 10000\box7}
  \def\cftil#1{\ifmmode\setbox7\hbox{$\accent"5E#1$}\else
  \setbox7\hbox{\accent"5E#1}\penalty 10000\relax\fi\raise 1\ht7
  \hbox{\lower1.15ex\hbox to 1\wd7{\hss\accent"7E\hss}}\penalty 10000
  \hskip-1\wd7\penalty 10000\box7} \def\cprime{$'$} \def\cprime{$'$}
  \def\cprime{$'$} \def\cprime{$'$} \def\cprime{$'$} \def\cprime{$'$}
  \def\cprime{$'$} \def\cprime{$'$}
\begin{bibdiv}
\begin{biblist}

\bib{bowdoin}{incollection}{
      author={Jeffrey Adams},
       title={Lifting of characters},
        date={1991},
   booktitle={Harmonic analysis on reductive groups ({B}runswick, {ME}, 1989)},
      series={Progr. Math.},
      volume={101},
   publisher={Birkh\"auser Boston},
     address={Boston, MA},
       pages={1\ndash 50},
      review={\MR{MR1168475 (93d:22014)}},
}

\bib{snowbird}{incollection}{
      author={Jeffrey Adams},
       title={Guide to the atlas software: Computational representation theory
  of real reductive groups},
        date={2008},
   booktitle={Representation theory of real reductive groups, proceedings of
  conference at snowbird, july 2006},
      series={Contemp. Math.},
   publisher={Amer. Math. Soc.},
}

\bib{abv}{book}{
      author={Jeffrey Adams},
      author={Dan Barbasch},
      author={David~A. Vogan Jr.},
       title={The {L}anglands classification and irreducible characters for
  real reductive groups},
      series={Progress in Mathematics},
   publisher={Birkh\"auser Boston Inc.},
     address={Boston, MA},
        date={1992},
      volume={104},
        ISBN={0-8176-3634-X},
      review={\MR{MR1162533 (93j:22001)}},
}

\bib{algorithms}{article}{
      author={Jeffrey Adams},
      author={Fokko du~Cloux},
       title={Algorithms for representation theory of real reductive groups},
        date={2009},
        ISSN={1474-7480},
     journal={J. Inst. Math. Jussieu},
      volume={8},
      number={2},
       pages={209\ndash 259},
      review={\MR{MR2485793}},
}

\bib{av1}{article}{
      author={Jeffrey Adams},
      author={David~A. Vogan Jr.},
       title={{$L$}-groups, projective representations, and the {L}anglands
  classification},
        date={1992},
        ISSN={0002-9327},
     journal={Amer. J. Math.},
      volume={114},
      number={1},
       pages={45\ndash 138},
      review={\MR{MR1147719 (93c:22021)}},
}

\bib{borelbook}{book}{
    author = {Armand Borel},
     title = {Linear algebraic groups},
    series = {Graduate Texts in Mathematics},
    volume = {126},
   edition = {Second},
 publisher = {Springer-Verlag, New York},
      year = {1991},
     pages = {xii+288}
}

\bib{borel bourbaki}{incollection}{
    author = {Armand Borel},
     title = {Sous-groupes compacts maximaux des groupes de {L}ie},
 booktitle = {S\'eminaire {B}ourbaki, {V}ol.\ 1},
     pages = {Exp.\ No.\ 33, 271--279},
 publisher = {Soc. Math. France, Paris},
      year = {1995}
}

\bib{borelserre}{article}{
      author={Armand Borel},
      author={Jean-Pierre Serre},
       title={Th\'eor\`emes de finitude en cohomologie galoisienne},
        date={1964},
        ISSN={0010-2571},
     journal={Comment. Math. Helv.},
      volume={39},
       pages={111\ndash 164},
      review={\MR{0181643}},
}

\bib{borovoi}{article}{
      author={Mikhail Borovoi},
       title={Galois cohomology of real reductive groups and real forms of
  simple lie algebras},
        date={1988},
     journal={Funct. Anal. Appl.},
      volume={22},
      number={2},
       pages={135\ndash 136},
}

\bib{borovoi_timashev}{article}{
      author={Mikhail Borovoi},
      author={Dmitry A. Timashev},
       title={Galois cohomology of real semisimple groups}
        date={2015},
        note={arXiv:1506.06252}
}

\bib{neronmodels}{book}{
    author = {Siegfried Bosch},
    author={ Werner L\"utkebohmert},
    author={Michel  Raynaud},
     title = {N\'eron models},
    series = {Ergebnisse der Mathematik und ihrer Grenzgebiete (3) [Results
              in Mathematics and Related Areas (3)]},
    volume = {21},
 publisher = {Springer-Verlag, Berlin},
      year = {1990},
     pages = {x+325}
}

\bib{bourbakiLie78}{book}{
      author={Nicolas Bourbaki},
       title={Lie groups and {L}ie algebras. {C}hapters 7--9},
      series={Elements of Mathematics (Berlin)},
   publisher={Springer-Verlag, Berlin},
        date={2005},
        ISBN={3-540-43405-4},
        note={Translated from the 1975 and 1982 French originals by Andrew
  Pressley},
      review={\MR{2109105}},
}

\bib{hochschild}{book}{
      author={Gerhard Hochschild},
       title={The structure of {L}ie groups},
   publisher={Holden-Day, Inc., San Francisco-London-Amsterdam},
        date={1965},
      review={\MR{0207883}},
}

\bib{kaletha_rigid}{article}{
      author={Tasho Kaletha},
       title={Rigid inner forms of real and p-adic groups},
       journal = {Ann. of Math. (2)},
       volume = {184},
       year = {2016},
       number = {2},
       pages = {559--632}
}

\bib{bookofinvolutions}{book}{
      author={Max-Albert Knus},
      author={Alexander Merkurjev},
      author={Markus Rost},
      author={Jean-Pierre Tignol},
       title={The book of involutions},
      series={American Mathematical Society Colloquium Publications},
   publisher={American Mathematical Society},
     address={Providence, RI},
        date={1998},
      volume={44},
        ISBN={0-8218-0904-0},
        note={With a preface in French by J. Tits},
      review={\MR{1632779 (2000a:16031)}},
}

\bib{kostant_rallis}{article}{
      author={Bertram Kostant},
      author={Stephen Rallis},
       title={Orbits and representations associated with symmetric spaces},
        date={1971},
        ISSN={0002-9327},
     journal={Amer. J. Math.},
      volume={93},
       pages={753\ndash 809},
      review={\MR{0311837}},
}

\bib{kottwitzStableCusp}{article}{
      author={Robert~E. Kottwitz},
       title={Stable trace formula: cuspidal tempered terms},
        date={1984},
     journal={Duke Math. J.},
      volume={51},
      number={3},
       pages={611\ndash 650},
}

\bib{kottwitzStable}{article}{
      author={Robert~E. Kottwitz},
       title={Stable trace formula: elliptic singular terms},
        date={1986},
        ISSN={0025-5831},
     journal={Math. Ann.},
      volume={275},
      number={3},
       pages={365\ndash 399},
         url={http://dx.doi.org/10.1007/BF01458611},
      review={\MR{858284 (88d:22027)}},
}

\bib{matsuki}{article}{
      author={Toshiko Matsuki},
       title={The orbits of affine symmetric spaces under the action of minimal
  parabolic subgroups},
        date={1979},
        ISSN={0025-5645},
     journal={J. Math. Soc. Japan},
      volume={31},
      number={2},
       pages={331\ndash 357},
         url={http://dx.doi.org/10.2969/jmsj/03120331},
      review={\MR{527548 (81a:53049)}},
}

\bib{mostow}{article}{
      author={George~D. Mostow},
       title={Self-adjoint groups},
        date={1955},
        ISSN={0003-486X},
     journal={Ann. of Math. (2)},
      volume={62},
       pages={44\ndash 55},
      review={\MR{0069830}},
}

\bib{steinberg}{book}{
      author={Robert Steinberg},
       title={Endomorphisms of linear algebraic groups},
      series={Memoirs of the American Mathematical Society, No. 80},
   publisher={American Mathematical Society, Providence, R.I.},
        year={1968},
       pages={108}
}

\bib{ov}{book}{
      author={Arkadij~L. Onishchik},
      author={Ernest~B. Vinberg},
       title={Lie groups and algebraic groups},
      series={Springer Series in Soviet Mathematics},
   publisher={Springer-Verlag},
     address={Berlin},
        date={1990},
        ISBN={3-540-50614-4},
        note={Translated from the Russian and with a preface by D. A. Leites},
      review={\MR{91g:22001}},
}

\bib{platonov_rapinchuk}{book}{
      author={Vladimir Platonov},
      author={Andrei Rapinchuk},
       title={Algebraic groups and number theory},
      series={Pure and Applied Mathematics},
   publisher={Academic Press Inc.},
     address={Boston, MA},
        date={1994},
      volume={139},
        ISBN={0-12-558180-7},
        note={Translated from the 1991 Russian original by Rachel Rowen},
      review={\MR{MR1278263 (95b:11039)}},
}

\bib{sekiguchi_correspondence}{article}{
      author={Sekiguchi, Jir{\=o}},
       title={Remarks on real nilpotent orbits of a symmetric pair},
        date={1987},
        ISSN={0025-5645},
     journal={J. Math. Soc. Japan},
      volume={39},
      number={1},
       pages={127\ndash 138},
         url={http://dx.doi.org/10.2969/jmsj/03910127},
      review={\MR{867991}},
}

\bib{serre_galois}{book}{
      author={Jean-Pierre Serre,},
       title={Galois cohomology},
     edition={English},
      series={Springer Monographs in Mathematics},
   publisher={Springer-Verlag},
     address={Berlin},
        date={2002},
        ISBN={3-540-42192-0},
        note={Translated from the French by Patrick Ion and revised by the
  author},
      review={\MR{1867431 (2002i:12004)}},
}

\bib{springer_book}{book}{
      author={Tonny~A. Springer},
       title={Linear algebraic groups},
     edition={Second},
      series={Progress in Mathematics},
   publisher={Birkh\"auser Boston Inc.},
     address={Boston, MA},
        date={1998},
      volume={9},
        ISBN={0-8176-4021-5},
      review={\MR{MR1642713 (99h:20075)}},
}

\bib{vogan_green}{book}{
      author={David~A. Vogan Jr.},
       title={Representations of real reductive {L}ie groups},
      series={Progress in Mathematics},
   publisher={Birkh\"auser Boston},
     address={Mass.},
        date={1981},
      volume={15},
        ISBN={3-7643-3037-6},
      review={\MR{MR632407 (83c:22022)}},
}

\bib{ic4}{article}{
      author={David~A. Vogan Jr.},
       title={Irreducible characters of semisimple {L}ie groups. {IV}.
  {C}haracter-multiplicity duality},
        date={1982},
        ISSN={0012-7094},
     journal={Duke Math. J.},
      volume={49},
      number={4},
       pages={943\ndash 1073},
      review={\MR{MR683010 (84h:22037)}},
}

\bib{vogan_local_langlands}{incollection}{
      author={David~A. Vogan Jr.},
       title={The local {L}anglands conjecture},
        date={1993},
   booktitle={Representation theory of groups and algebras},
      series={Contemp. Math.},
      volume={145},
   publisher={Amer. Math. Soc.},
     address={Providence, RI},
       pages={305\ndash 379},
      review={\MR{MR1216197 (94e:22031)}},
}

\bib{warner_I}{book}{
      author={Garth Warner},
       title={Harmonic analysis on semi-simple {L}ie groups. {II}},
   publisher={Springer-Verlag},
     address={New York},
        date={1972},
        note={Die Grundlehren der mathematischen Wissenschaften, Band 189},
      review={\MR{0499000 (58 \#16980)}},
}

\end{biblist}
\end{bibdiv}

\enddocument
\end